\newcommand\ovl[1]{\overline{#1}}
\newcommand{\eps}{\varepsilon}
\newcommand{\Z}{\mathbb{Z}}
\newcommand{\C}{\mathbb{C}}
\newcommand{\Q}{\mathbb{Q}}
\newtheorem{theorem}{Theorem}[section]
\newtheorem{proposition}[theorem]{Proposition}
\theoremstyle{definition}
\newtheorem{definition}[theorem]{Definition}
\newtheorem{remark}[theorem]{Remark}
\newtheorem{example}[theorem]{Example}
\newtheorem{conjecture}[theorem]{Conjecture}
\newtheorem{conjecture/question}[theorem]{Conjecture/Question}
\newtheorem{remark/definition}[theorem]{Remark/Definition}
\newtheorem{terminology/notation}[theorem]{Terminology/Notation}
\definecolor{zielony}{rgb}{0.5, 0.9, 0.1}
\definecolor{czerwony}{rgb}{0.9, 0.2, 0.1}
\definecolor{niebieski}{rgb}{0.3, 0.1, 0.9}
\newcommand{\Oc}{\mathcal{O}_C}
\newcommand{\Pic}{\operatorname{Pic}}
\newcommand{\Nm}{\operatorname{Nm}}
\newcommand{\rk}{\operatorname{rk}}
\newcommand{\rank}{\operatorname{rank}}
\newcommand{\st}{\operatorname{st}}
\newcommand{\ra}{\rightarrow}
\newcommand{\xra}{\xrightarrow}
\newcommand{\Oo}{\mathcal{O}}
\newcommand{\R}{\mathcal{R}}
\newcommand{\M}{\mathcal{M}}
\newcommand{\A}{\mathcal{A}}
\newcommand{\K}{\mathcal{K}}
\newcommand{\SSS}{\mathcal{S}}
\newcommand{\Sp}{\mathcal{S}^+_g}
\newcommand{\Spo}{\overline{\mathcal{S}}^+_g}
\newcommand{\Soo}{\overline{\mathcal{S}}^+_8}
\newcommand{\Mgdr}{\mathcal{M}^r_{g,d}}
\newcommand{\Mgdro}{\overline{\mathcal{M}}^r_{g,d}}
\def\mm{\overline{\mathcal{M}}}
\def\ss{\overline{\mathcal{S}}}
\def\J{\mathcal{J}}
\def\PP{{\bf{P}}}
\def\B{\mathcal{B}}
\def\U{\mathcal{U}}
\def\S{\mathcal{S}}
\begin{document}

\title[Prym varieties and their moduli]
 {Prym varieties and their moduli}

\author[G. Farkas]{Gavril Farkas}

\address{Humboldt-Universit\"at zu Berlin, Institut F\"ur Mathematik,  Unter den Linden 6
\hfill \newline\texttt{}
 \indent 10099 Berlin, Germany} \email{{\tt farkas@math.hu-berlin.de}}
\thanks{These notes are based on lectures delivered in July 2010 in Bedlewo at the \emph{IMPANGA Summer School on Algebraic Geometry} and in January 2011 in Luminy at the annual meeting \emph{G\'eom\'etrie Alg\'ebrique Complexe}. I would like to thank Piotr Pragacz for encouragement and for asking me to write this paper in the first place,  Maria Donten-Bury and Oskar Kedzierski for writing-up a preliminary version of the lectures, as well as Herbert Lange for pointing out to me the historical figure of F. Prym. This work was finalized during a visit at the Isaac Newton Institute in Cambridge. }
\thanks{MSC 2010 subject classification: 14H10, 14K10, 14K25, 14H42.
}
\maketitle

\begin{abstract}
This survey discusses the geometry of the moduli space of Prym varieties.  Several applications of Pryms in algebraic geometry are presented.  The paper begins with with a historical discussion of the life and achievements of Friedrich Prym. Topics treated in subsequent sections include singularities and Kodaira dimension of the moduli space, syzygies of Prym-canonical embedding and the geometry of the moduli space $\mathcal{R}_g$ in small genus.
\vskip 3pt
\noindent \emph{Keywords:} Friedrich Prym, Friedrich Schottky, Prym variety, Schottky-Jung relations, moduli space, syzygy, Prym-Green conjecture, Nikulin surface, canonical singularity.
\end{abstract}

\section{Prym, Schottky and 19th century theta functions }

Prym varieties are principally polarized abelian varieties associated to \'etale double covers of curves. They establish a bridge between the geometry of curves and that of abelian varieties and as such, have been studied for over 100 years, initially from an analytic \cite{Wi}, \cite{SJ}, \cite{FR} and later from an algebraic \cite{M} point of view.
Several approaches to the Schottky problem are centered around Prym varieties, see \cite{B2}, \cite{D2} and references therein. In 1909, in an attempt to characterize genus $g$ theta functions coming from Riemann surfaces and thus solve what is nowadays called \emph{the Schottky problem}, F. Schottky and H. Jung, following earlier work of Wirtinger, associated  to certain two-valued \emph{Prym differentials} on a  Riemann surface $C$ new theta constants which then they related to the classical theta constants, establishing what came to be known as the \emph{Scottky-Jung relations}.
The first rigorous proof of the Schottky-Jung relations has been given by H. Farkas \cite{HF}.  The very definition of these differentials forces one to consider the parameter space of unramified double covers of curves of genus $g$. 

The aim of these lectures is to discuss the birational geometry of the moduli space $\R_g$ of Prym varieties of dimension $g-1$.  Prym varieties were named by Mumford  after Friedrich Prym (1841-1915) in the very influential paper \cite{M} in which, not only did Mumford bring to the forefront a largely forgotten part of complex function theory, but he developed an algebraic theory of Pryms, firmly anchored in modern algebraic geometry. In particular, Mumford gave a simple algebraic proof of the Schottky-Jung relations.
\vskip 3pt
 \ \ \ \ To many algebraic geometers Friedrich Prym is a little-known figure, mainly because most of his work concerns potential theory and theta functions rather than algebraic geometry. For this reason, I find it appropriate to begin this article by mentioning a few aspects from the life of this interesting transitional character in the history of the theory of complex functions.

\vskip 4pt
\ \ \ \ \ \ Friedrich Prym was born into one of the oldest business families in Germany, still active today in producing haberdashery articles. He began to study mathematics at the University of Berlin in 1859. After only two semesters, at the advice of Christoffel, he moved for one year to G\"ottingen in order to hear Riemann's lectures on complex function theory. The encounter with  Riemann had a profound effect on Prym and influenced his research for the rest of his life. Having returned to Berlin, in 1863 Prym successfully defended his doctoral dissertation under the supervision of Kummer. The dissertation was praised by Kummer for his didactic qualities,  and  deals with theta functions on a Riemann surface of genus $2$.

\ \ \ \ \ \ After a brief intermezzo in the banking industry, Prym won professorships first in Z\"urich in 1865 and then in 1869 in W\"urzburg, at the time a very small university. In Prym's first year in W\"urzburg, there was not a single student studying mathematics, in 1870 there were only three such students and in the year after that their number increased to four. Prym stayed in W\"urzburg four decades until his retirement in 1909, serving at times as Dean and Rector of the University. In 1872 he turned down a much more prestigious offer of a Chair at the University of Strasbourg, newly created after the Franco-German War of 1870-71. He did use though the offer from Strasbourg in order to improve the conditions for mathematical research in W\"urzburg. In particular, following more advanced German universities like Berlin
or G\"ottingen, he created a \emph{Mathematisches Seminar} with weekly talks. By the time of his retirement, the street in W\"urzburg on which his house stood was already called \emph{Prymstrasse}. Friedrich Prym was a very rich but generous man. According to \cite{Vol}, he once claimed that while one might argue that he was a bad mathematician, nobody could ever claim that he was a bad businessman. In 1911, Prym published at his own expense in 1000 copies his \emph{Magnum Opus} \cite{PR}. The massive 550 page book, written jointly with his collaborator Georg Rost \footnote{Georg Rost(1870-1958) was a student of Prym's and became Professor in W\"urzburg in 1906. He was instrumental in helping Prym write \cite{PR} and after Prym's death was expected to write two subsequent volumes developing a theory of $n$-th order Prym functions. Very little came to fruition of this, partly because Rost's interests turned to astronomy.  Whatever Rost did write however, vanished in flames during the bombing of W\"urzburg in 1945, see \cite{Vol}.} explains the theory of  \emph{Prym functions}, and was distributed by Prym himself to a select set of people. In 1912, shortly before his death, Prym created the \emph{Friedrich Prym Stiftung} for supporting young researchers in mathematics and endowed it with 20000 Marks, see again \cite{Vol}. Unfortunately, the endowment was greatly devalued during the inflation of the 1920's.
\vskip 3pt

\ \ \ \ \   We now briefly discuss the role Prym played in German mathematics of the 19th century. Krazer \cite{Kr} writes that after the death of both Riemann and Roch in 1866, it was left to Prym alone to continue explaining   "Riemann's science" (\emph{"...Prym allein die Aufgabe zufiel, die Riemannsche Lehre weiterzuf\"uhren"}). For instance, the paper \cite{P1} published during Prym's years in Z\"urich, implements Riemann's ideas in the context of hyperelliptic theta functions on curves of any genus, thus generalizing the results from Prym's dissertation in the case $g=2$.  This work grew out of long conversations with Riemann that took place in 1865 in Pisa, where Riemann was unsuccessfully trying to regain his health.
\vskip 3pt

\ \ \ \ During the last decades of the 19th century, Riemann's dissertation of 1851 and his 1857 masterpiece \emph{Theorie der Abelschen Funktionen}, developed in staggering generality, without examples and with cryptic proofs, was still regarded with mistrust as a "book with seven seals" by many people, or even with outright hostility by Weierstrass and his school of complex function theory in Berlin \footnote{Weierstrass' attack centered on Riemann's use of the Dirichlet Minimum Principle for solving boundary value problems. This  was a central point in Riemann's work on the mapping theorem, and in 1870 in front of the Royal Academy of Sciences in Berlin, Weierstrass gave a famous counterexample showing that the Dirichlet functional cannot always be minimized. Weierstrass' criticism was ideological and  damaging, insofar it managed to create the  impression, which was to persist several decades until the concept of Hilbert space emerged, that some of Riemann's methods are not rigorous. We refer to the beautiful book \cite{La} for a thorough discussion. Note that Prym himself wrote a paper \cite{P2} providing an example of a continuous function on the closed disc, harmonic on the interior and which contradicts the Dirichlet Principle.}. In this context, it was important to have down to earth examples, where Riemann's method was put to work. Prym's papers on theta functions played precisely such a role.
\vskip 3pt
%\begin{figure}[!h]
%\begin{center}
%\hspace{2cm}
%\includegraphics[width=8cm, height=10cm]{prym.ps}
%\caption{double covers}
%\end{center}
%\end{figure}

\ \ \ \ The following quote is revelatory for understanding Prym's role as an interpreter of Riemann.  Felix Klein \cite{Kl} describes a conversation of his with Prym that took place in 1874, and concerns the question whether Riemann was familiar with the concept of abstract manifold or merely regarded Riemann surfaces as representations of multi-valued complex functions. The discussion seems to have played a significant role towards crystalizing Klein's view of Riemann surfaces as abstract objects:
\emph{"Ich weiss nicht, ob ich je zu einer in sich abgeschlossenen Gesamtauffassung gekommen w\"are, h\"atte mir nicht Herr Prym vor l\"angeren Jahren eine Mitteilung gemacht, die immer wesentlicher f\"ur mich geworden ist, je l\"anger ich \"uber den Gegenstand nachgedacht habe. Er  erz\"ahlte mir, dass die Riemannschen Fl\"achen urspr\"unglich durchaus nicht notwendig mehrbl\"attrige Fl\"achen \"uber der Ebene sind, dass man vielmehr auf beliebig gegebenen krummen Fl\"achen ganz ebenso komplexe Funktionen des Ortes studieren kann, wie auf den Fl\"achen \"uber der
Ebene"} ("I do not know if I could have come to a self-contained conception [about Riemann surfaces], were it not for a discussion some years ago with Mr. Prym, which the more I thought about the subject, the more important it became to me. He told me that Riemann surfaces are not necessarily multi-sheeted covers of the plane, and one can just as well study complex functions on arbitrary curved surfaces as on surfaces over the plane")\footnote{It is amusing to note that after this quote appeared in 1882, Prym denied having any recollection of this conversation with Klein.}.

\vskip 4pt
\ \ \ \ Prym varieties (or rather,  theta functions corresponding to Prym varieties) were studied  for the first time in Wirtinger's monograph \cite{Wi}. Among other things, Wirtinger observes that the theta functions of the Jacobian of an unramified double covering  split into the theta functions of the Jacobian of the base curve, and \emph{new theta functions} that depend on more moduli than the theta functions of the base curve. The first forceful important application of the Prym theta functions comes in 1909 in the important paper \cite{SJ} of Schottky and his student Jung.

\vskip 4pt
\ \ \ \
Friedrich Schottky (1851-1935) received his doctorate in Berlin in 1875 under Weierstrass and Kummer. Compared to Prym, Schottky is clearly a more important and deeper mathematician. Apart from the formulating the \emph{Schottky Problem} and his results on theta functions, he is also remembered today for his contributions to Fuchsian groups, for the \emph{Schottky Groups}, as well as for a generalization of \emph{Picard's Big Theorem} on analytic functions with an essential singularity. To illustrate Schottky's character, we quote from two remarkable letters that Weierstrass wrote. To Sofja Kowalewskaja he writes \cite{Bo}:
"... [Schottky] is of a clumsy appearance, unprepossessing, a dreamer, but if I am not completely wrong, he possesses an important mathematical talent". The following is from a letter to Hermann Schwarz \cite{Bi}: "... [Schottky] is unsuited for practical life.  Last Christams he was arrested for failing to register for military service. After six weeks however he was discharged as being of no use whatsoever to the army. [While the army was looking for him] he was staying in some corner of the city, pondering about linear differential equations whose coefficients appear also in my theory of abelian integrals. So you see the true mathematical genius of times past, with other inclinations" (\emph{"... das richtige mathematische Genie vergangener Zeit mit anderen Neigungen"}).
\vskip 3pt

\ \ \ \ Schottky was Professor in Z\"urich and Marburg, before returning to the University of Berlin in 1902 as the successor of Lazarus Fuchs \footnote{Fuchs' chair was offered initially to Hilbert, but he declined preferring to remain in G\"ottingen after  the university, in an effort to retain him, created a new Chair for his friend Hermann Minkowski. It was in this way that Schottky, as second on the list, was controversially hired at the insistence of Frobenius, and despite the protests of the Minister, who (correctly) thought that Schottky's teaching was totally inadequate (\emph{``durchaus unbrauchbar''}) and would have preferred that the position be offered to Felix Klein instead \cite{Bo}.   Schottky could never be asked to teach beginner's courses, not even in the dramatic years of World War I.}. Schottky remained at the University of Berlin until his retirement in 1922. Due to his personality he could neither attract students nor play a leading role in the German mathematical life and his appointment
can be regarded as a failure that accentuated Berlin's mathematical decline in comparison with G\"ottingen.

\vskip 3pt
\ \ \ \ The paper \cite{SJ} deals with the characterization of theta-constants $\vartheta\Bigl[\begin{array}{c}
\epsilon\\
\delta\\
\end{array}\Bigr](\tau, 0)$ of period matrices $\tau\in \mathfrak{H}_g$ in the Siegel upper-half space that correspond to Jacobians of algebraic curves of genus $g$. Schottky and Jung start with a characteristic of genus $g-1$, that is a pair $\epsilon, \delta\in \{0, 1\}^{g-1}$ and note that
if one completes this characteristic to one of genus $g$ by adding one column in two possible ways, the product of the two theta constants satisfies the following proportionality relation:
$$\vartheta^2\Bigl[\begin{array}{c}
\epsilon\\
\delta\\
\end{array}\Bigr](\Pi, 0)\sim\vartheta\Bigl[\begin{array}{ccc}
\epsilon& 0\\
\delta& 0\\
\end{array}\Bigr](\tau, 0)\cdot \vartheta\Bigl[\begin{array}{ccc}
\epsilon& 0\\
\delta& 1\\
\end{array}\Bigr](\tau, 0)$$
Here $\tau\in \mathfrak{H}_g$ is the period matrix of a Jacobian of a genus $g$ curve but the novelty is that $\Pi\in \mathfrak{H}_{g-1}$ no longer corresponds to any Jacobian but rather to a Prym variety, constructed from an unramified double covering of the curve whose period matrix is $\tau$. This allows one to obtain theta relations for Jacobians in genus $g$ starting with any theta relation in genus $g-1$ (for instance \emph{Riemann's theta formula}). Schottky himself carried out this approach for $g=4$.

\section{The moduli space of Prym varieties}

The main object of these lectures is the moduli space of unramified double covers of genus $g$ curves, that is, the parameter space
$$\R_g =\Bigl\{[C,\eta] \::\:\hbox{$C$ \ is a smooth curve of genus $g$}, \ \eta \in \Pic^0(C)-\{\Oo_C\}, \ \eta^{\otimes 2} = \Oc\Bigr\}.$$
In its modern guise, that is, as a coarse moduli space representing a stack,  this space appears for the first time in Beauville's influential paper \cite{B3}. The choice of the name corresponds to the French word \emph{rev\^etement}.

\ \ \ \ We begin by recalling basic facts about the algebraic theory of Prym varieties. As a general reference we recommend \cite{ACGH} Appendix C, \cite{BL} Chapter 12 and especially \cite{M}.  We fix an integer $g\geq 1$ and denote by
$$\mathfrak{H}_g:=\{\tau\in M_{g, g}(\mathbb C): \tau=^t\tau, \ \mbox{Im }\tau>0\}$$ the Siegel upper half-space
of period matrices for abelian varieties of dimension $g$, hence $\A_g:=\mathfrak{H}_g/Sp_{2g}(\mathbb Z)$.
The \emph{Riemann theta function with characteristics} $\Bigl[\begin{array}{c}
\epsilon\\
\delta\\
\end{array}\Bigr]$ is defined as the holomorphic function $\vartheta:\mathfrak{H}_g\times \mathbb C^g\rightarrow \mathbb C$, where
$$\vartheta\Bigl[\begin{array}{c}
\epsilon\\
\delta\\
\end{array}\Bigr](\tau, z):=\sum_{m\in \mathbb Z^g} \mathrm{exp} \Bigl(\pi i \ ^t(m+\frac{\epsilon}{2})\tau (m+\frac{\epsilon}{2})+2\pi i \ ^t(m+\frac{\epsilon}{2})(z+\frac{\delta}{2})\Bigr),$$
where $\epsilon=(\epsilon_1, \ldots, \epsilon_g), \delta=(\delta_1, \ldots, \delta_g)\in \{0, 1\}^g$.  For any period matrix $\tau \in \mathfrak{H}_g$, the pair
$$\Bigl(A_{\tau}:=\frac{\mathbb C^g}{\mathbb Z^g+\tau\cdot \mathbb Z^g}, \ \ \Theta_{\tau}:=\Bigl\{\vartheta \Bigl[\begin{array}{c}
0\\
0\\
\end{array}\Bigr](\tau, z)=0\Bigr\}\Bigr)$$
defines a principally polarized abelian variety, that is, $[A_{\tau}, \Theta_{\tau}]\in \A_g$.
\vskip 3pt

\ \ \ \ To a smooth curve $C$ of genus $g$, via the Abel-Jacobi isomorphism
\begin{equation}\label{aj}
\mbox{Pic}^0(C)=\frac{H^0(C, K_C)^{\vee}}{H_1(C, \mathbb Z)},
\end{equation}
one associates a period matrix as follows. Let $(\alpha_1, \ldots, \alpha_g, \beta_1, \ldots, \beta_g)$ a symplectic basis of $H_1(C, \mathbb Z)$, and denote by $(\omega_1, \ldots, \omega_g)$ the basis of $H^0(C, K_C)$ characterized by $\int_{\alpha_i}\omega_j=\delta_{ij}$. Then
$\tau:=\Bigl(\int_{\beta_i} \omega_j\Bigr)_{i, j=1}^g\in \mathfrak H_g$ is a period matrix associated to $C$ and $\vartheta\Bigl[\begin{array}{c}
0\\
0\\
\end{array}\Bigr](\tau, 0)$ is the \emph{theta constant} associated to $C$. The theta function $\vartheta\Bigl[\begin{array}{c}
0\\
0\\
\end{array}\Bigr](\tau, z)$ is the (up to scalar multiplication) unique section of the bundle $\Oo_{A_{\tau}}(\Theta_{\tau})$. The theta functions with characteristic $\vartheta\Bigl[\begin{array}{c}
\epsilon\\
\delta\\
\end{array}\Bigr](\tau, z)$ are the unique sections of the $2^{2g}$ symmetric line bundles on $A_{\tau}$ algebraically equivalent to $\Oo_{A_{\tau}}(\Theta_{\tau})$. A very clear modern discussion of basics of theta functions can be found in \cite{BL} Chapter 3. For the case of Jacobians we also recommend \cite{Fay}.
\vskip 5pt

\ \ \ \ Using the Abel-Jacobi isomorphism (\ref{aj}), one can identify torsion points of order $2$ in the Jacobian variety of $C$ with \emph{half-periods} $0\neq \eta \in H_1(C,\Z_2)$. Given a half-period, by taking its orthogonal complement with respect to the intersection product, one obtains a subgroup of index $2$ in  $H_1(C, \mathbb Z)$ which determines an unramified double cover of $C$.  Algebraically, given a line bundle $\eta\in \mathrm{Pic}^0(C)-\{\Oo_C\}$ together with a sheaf isomorphism $\phi:\eta^{\otimes 2}\stackrel{\cong}\rightarrow \Oo_C$, one associates an unramified double
cover $f:\widetilde{C} \ra C$ such that $$\widetilde{C}:=\mbox{Spec}(\Oo_C\oplus \eta).$$
The multiplication in the $\Oo_C$-algebra $\Oo_C\oplus \eta$ is defined via the isomorphism of sheaves $\phi$, that is,
 $$(a+s)\cdot (b+t):=ab+\phi(s\cdot t)+a\cdot t+b\cdot s,$$
 for $a, b\in \Oo_C$ and $s, t\in \eta$.  Note that $f_{*} (\Oo_{\widetilde{C}})=\Oc \oplus\eta$, in particular
 \begin{equation}\label{sj}
 H^0(\widetilde{C}, f^*L)=H^0(C, L)\oplus H^0(C, L\otimes \eta),
 \end{equation}
 for any line bundle $L\in \mathrm{Pic}(C)$.
\vskip 3pt

\ \ \ \ The \'etale double cover $f:\widetilde{C} \ra C$ induces a \emph{norm map} $$\mathrm{Nm}_{f}: \Pic^{2g-2}(\widetilde{C}) \ra \Pic^{2g-2}(C), \ \ \Nm_f\bigl(\Oo_{\widetilde{C}}(D)\bigr):=\Oo_C(f(D)).$$
It is proved in \cite{M} Section 3, that the inverse image $\Nm_f^{-1}(K_C)$ consists of the disjoint union of two copies $$\Nm_f^{-1}(K_C)^{\hbox{\tiny{even}}}\amalg \Nm_f^{-1}(K_C)^{\hbox{\tiny{odd}}}$$ of the same abelian variety, depending on the parity of the number of sections of line bundles on $\widetilde{C}$. We define the Prym variety of the pair $[C, \eta]$ as follows:
\vskip 3pt

\begin{center}
\fbox{$
\mbox{Pr}(C, \eta) := \mbox{Nm}_f^{-1}(K_C)^{\hbox{\tiny{even}}} = \Bigl\{L \in \mbox{Nm}_f^{-1}(K_C): h^0(C, L) \equiv 0\ \mbox{ mod } 2\Bigr\}.
$}
\end{center}
\vskip 3pt
This is a $(g-1)$-dimensional abelian variety carrying a principal polarization. Precisely, if $\Theta_{\widetilde{C}}=W_{2g-2}(\widetilde{C})\subset \mbox{Pic}^{2g-2}(\widetilde{C})$ is the Riemann theta divisor of $\widetilde{C}$, then
\begin{equation}\label{polarization}
\Theta_{\widetilde{C}}\cdot \mathrm{Pr}(C, \eta)=2\ \Xi_C,
\end{equation}
where $\Xi_C$ is a principal  polarization which can be expressed set-theoretically as
$$\Xi_C = \bigl\{L \in \Nm_f^{-1}(K_C)^{\hbox{\tiny{even}}}: h^0(\widetilde{C}, L) \geq 2\bigr\}.$$

\begin{example} We explain how to construct the period matrix of the Prym variety $\mathrm{Pr}(C, \eta)$. Let $(a_0, \ldots, a_{2g-2}, b_0, \ldots, b_{2g-2})$ be a symplectic basis of $H_1(\widetilde{C}, \mathbb Z)$ compatible with the involution $\iota:\widetilde{C}\rightarrow \widetilde{C}$ exchanging the sheets of $f$, that is,
$$\iota_*(a_0)=a_0,\ \iota_*(b_0)=b_0, \ \iota_*(a_i)=a_{i+g-1} \mbox{ and } \iota_*(b_i)=b_{i+g-1} \mbox{ for } i=1, \ldots, g-1.$$
If $(\omega_0, \ldots, \omega_{2g-2})$ is the basis of $H^0(\widetilde{C}, K_{\widetilde{C}})$ dual to the cycles $\{a_i\}_{i=0}^{2g-2}$, then the forms $u_i:=\omega_i-\omega_{i+g-1}$ are $\iota$ anti-invariant and the period matrix of $\mathrm{Pr}(C, \eta)$ is
$$\Pi:=\Bigl(\int_{b_i} u_i\Bigr)_{i, j=1}^{g-1}\in \mathfrak{H}_{g-1}.$$
\end{example}
\vskip 3pt

\begin{example} Having fixed an \'etale covering $f:\widetilde{C}\rightarrow C$ as above, we denote by
$$\Theta_{C, \eta}:=\{L\in \mbox{Pic}^{g-1}(C): h^0(C, L\otimes \eta)\geq 1\}$$ the translate of the Riemann theta divisor and by $f^*:\mbox{Pic}^{g-1}(C)\rightarrow \mbox{Pic}^{2g-2}(\widetilde{C})$ the pull-back map.
The following algebraic form of the \emph{Schottky-Jung relation} holds:
\begin{equation}\label{sj2}
(f^*)^{-1}(\Theta_{\widetilde{C}})=\Theta_C+\Theta_{C, \eta},
\end{equation}
where $\Theta_C=W_{g-1}(C)$. Indeed, this is an immediate consequence of (\ref{sj}), for if $L\in \mbox{Pic}^{g-1}(C)$ satisfies $h^0(\widetilde{C}, f^*L)\geq 1$, then $h^0(C, L)\geq 1$ or $h^0(C, L\otimes \eta)\geq 1$.
\end{example}

\ \ \ \ Putting together formulas (\ref{polarization}) and (\ref{sj2}), one  concludes that there exists a proportionality relation between the theta constants of the Jacobian  $\mbox{Pic}^0(C)$ having period matrix $\tau_g\in \mathfrak{H}_g$,  and those of $\mathrm{Pr}(C, \eta)$ with corresponding period matrix $\Pi_{g-1}\in \mathfrak{H}_{g-1}$. This is the Schottky-Jung relation \cite{SJ}:

\begin{center}
\fbox{$\lambda\cdot \vartheta^2\Bigl[\begin{array}{c}
\epsilon\\
\delta\\
\end{array}\Bigr](\Pi_{g-1}, 0)=\vartheta\Bigl[\begin{array}{ccc}
\epsilon& 0\\
\delta& 0\\
\end{array}\Bigr](\tau_g, 0)\cdot \vartheta\Bigl[\begin{array}{ccc}
\epsilon& 0\\
\delta& 1\\
\end{array}\Bigr](\tau_g, 0)$}
\end{center}
The constant $\lambda\in \mathbb C^*$ is independent of the characteristics $\epsilon, \delta\in \{0, 1\}^{g-1}$.

\vskip 4pt

\ \ \ \ The moduli space  $\R_g$ is thus established as a highly interesting correspondence between the moduli space of curves and the moduli space of principally polarized abelian varieties:
$$\xymatrix{
  & \R_g \ar[dl]_{\pi} \ar[dr]^{\Pr_g} & \\
   \M_g & & \A_{g-1}       \\
                 }$$
Here $\pi$ is forgetful map whereas $\Pr_g$ is the Prym map
$$\R_g\ni [C, \eta]\mapsto \bigl[\mathrm{Pr}(C, \eta), \ \Xi_C\bigr]\in \A_{g-1}.$$
 We denote by $\mathcal{P}_{g-1}$ the closure in $\A_{g-1}$ of the image $\mathrm{Pr}_g(\R_g)$. It is proved in \cite{FS} that the Prym map is generically injective when $g\geq 7$. Unlike the case of Jacobians, $\mathrm{Pr}_g$ is never injective \cite{D2} and the study of the non-injectivity locus of the Prym map is a notorious open problem. Without going into details, we point out \cite{IL} for an important recent result in this direction.

\section{Why $\R_g$?}
Since Mumford \cite{M} "rediscovered" Prym varieties and developed their algebraic theory using modern techniques, there have been a number of important developments in algebraic geometry where Prym varieties and their generalizations play a decisive role. We mention four highlights:

\vskip 3pt

\subsection{The Schottky problem.} The Torelli map
$$t_g:\M_g\rightarrow \A_g, \ \ \ t_g([C]):=[\mathrm{Jac}(C), \Theta_C],$$
assigns to a smooth curve its principally polarized Jacobian variety. It is the content of Torelli's theorem that the map $t_g$
is injective, that is, every smooth curve $C$ can be recovered from the pair $(\mathrm{Jac}(C), \Theta_C)$, see \cite{An} for one of the numerous proofs. To put it informally, the Schottky problem asks for a characterization of the Jacobian locus $\mathcal{J}_g:=\overline{t_g(\M_g)}$ inside $\A_g$. \vskip 3pt

\underline{\emph{Schottky problem (Analytic formulation):}} Characterize the period matrices $\tau\in \mathfrak{H}_{g}$ that correspond to Jacobians. Find equations of the theta constants $\vartheta\bigl[\begin{array}{c}
\epsilon\\
\delta\\
\end{array}\bigr](\tau, 0)$ of Jacobian varieties of genus $g$.
\vskip 4pt

\ \ \ \ Van Geemen \cite{vG} has shown the Jacobian locus $\mathcal{J}_g$ is a component of the locus $\mathcal{SJ}_g\subset \A_g$ consisting of period matrices $\tau\in \mathfrak{H}_g$ for which the Schottky-Jung relations are satisfied for \emph{all} characteristics $\epsilon, \delta\in \{0, 1\}^g$. In the case $g=4$ there is a single Schottky-Jung relation, a polynomial of degree $16$ in the theta constants, and which cuts out precisely the hypersurface $\mathcal{J}_4\subset \A_4$. This is the formula given by Schottky \cite{Sch} in 1888 and one concludes that the following equality holds:
$$\mathcal{J}_4=\mathcal{SJ}_4.$$
Other analytic characterizations of $\mathcal{J}_g$ (KP equation, $\Gamma_{00}$ conjecture of van Geemen-van der Geer) have been recently surveyed by Grushevsky \cite{G2}.
\vskip 4pt

\underline{\emph{Schottky problem (Geometric formulation):}} Find geometric properties of principally polarized abelian varieties that distinguish or single out Jacobians.

\vskip 3pt
\ \ \ \ The most notorious geometric characterization is in terms of singularities of theta divisors. Andreotti and Mayer \cite{AM} starting from the observation that that the theta divisor of the Jacobian of a curve of genus $g$ is singular in dimension at least $g-4$, considered the stratification of $\A_g$ with strata
$$N_{g, k}:=\{[A, \Theta]\in \A_g: \mathrm{dim}(\Theta)\geq k\}$$
and showed that $\mathcal{J}_g$ is an irreducible component of $N_{g, g-4}$. This is what is called a \emph{weak geometric characterization of Jacobians}. Unfortunately, $N_{g, g-4}$ contains other components apart $\mathcal{J}_g$, hence the adjective "weak". The same program at the level of Prym varieties has been carried out by Debarre \cite{De}. Using the methods of Andreotti-Mayer, he showed that
$\mbox{dim}(\Xi_C)\geq g-6$
for any $[C, \eta]\in \R_g$ and for $g\geq 7$, the Prym locus $\mathcal{P}_g$ is a component of $N_{g, g-6}$.

\ \ \ \ To give another significant recent example, Krichever \cite{Kr} found a solution to \emph{Welters' Conjecture} stating that an abelian variety $[A, \Theta]\in \A_g$ is a Jacobian if and only if the Kummer image $\mathrm{Km}: A \stackrel{|2\Theta|}\longrightarrow \PP^{2^g-1}$ admits a trisecant line. Using similar methods, Grushevsky and Krichever \cite{GK} found a characterization of the Prym locus $\mathcal{P}_g$ in terms of quadrisecant planes in the Kummer embedding.
\vskip 5pt

\ \ \ \ By a dimension count, note that $\dim(\M_g)=\dim(\R_g) =3g-3>\mbox{dim}(\mathcal{J}_{g-1})$. One expects to find more Pryms than Jacobians in a given genus, and indeed, it is known that $\mathcal{J}_{g-1} \subset \mathcal{P}_{g-1}$, that is, Jacobians of dimension $g-1$ appear as limits of Prym varieties. We refer to \cite{Wi} for the original analytic proof, or to \cite{B2} for a modern algebraic proof.  Therefore one has at his disposal a larger subvariety of $\A_{g-1}$ than $\mathcal{J}_{g-1}$ which is amenable to geometric study via the rich and explicit theory of curves and their correspondences. This approach is particularly effective for $g \leq 6$,  when $\mathrm{Pr}_g:\R_g \ra \A_{g-1}$ is dominant, hence the study of $\R_g$ can be directly used to derive information about $\A_{g-1}$. It is one of the main themes of these lectures to describe the geometry of $\R_g$ when $g\leq 8$.
\vskip 3pt
\subsection{Rationality questions for $3$-folds.} Prym varieties have been used successfully to detect non-rational Fano $3$-folds. If $X$ is a smooth Fano $3$-fold (in particular $H^{3, 0}(X)=0$), its \emph{intermediate Jacobian} is defined as the complex torus $$\mathcal{J}(X):=H^{2, 1}(X)^{\vee}/H^3(X, \mathbb Z),$$
with the polarization coming from the intersection product on $H^3(X, \mathbb Z)$. Since $H^{3, 0}(X)=0$, one obtains in this way a principally polarized abelian variety. Assume now that $f:X\rightarrow \PP^2$ is a conic bundle and consider the discriminant curve
$$C:=\bigl\{t\in \PP^2: f^{-1}(t)=l_1+l_2, \mbox{ where } l_i\subset X \mbox{ are lines}\bigr\}.$$ Thus $C$ parametrizes pairs of lines, and assuming that $l_1\neq l_2$ for every $t\in C$, we can consider the \'etale double cover $C'\rightarrow C$ from the parameter space of lines themselves to the space classifying pairs of lines. It is then known \cite{B3} that $(\mathcal{J}(X), \Theta_{\J})\cong (\mathrm{Pr}(\widetilde{C}/C), \Xi)$, that is, the intermediate Jacobian of $X$ is a Prym variety. Furthermore, $X$ is rational if and only if $(\mathcal{J}(X), \Theta_{\mathcal{J}})$ is a Jacobian. Using the explicit form of the theta divisor of a Prym variety, in some cases one can rule out the possibility that $\mathcal{J}(X)$ is isomorphic to a Jacobian and conclude that $X$ cannot be rational. In this spirit, Clemens and Griffiths \cite{CG} proved that any \emph{smooth} cubic threefold $X_3 \subset \PP^4$ is non-rational.  Since $\mathcal{J}(X_3)$ is the Prym variety corresponding to a smooth plane quintic, it follows from the study of $\mathrm{Sing}(\Xi)$ carried out in \cite{M}, that  $X_3$ is not rational. A similar approach, works in a number of other cases, e.g. when $X\subset \PP^6$ is a smooth intersection of three quadrics, see \cite{B3}.
\vskip 3pt

\subsection{The Hitchin system.} This is a topic that has seen an explosion of interest recently since Ng\^o \cite{N} proved the fundamental lemma in the Langlands program using the topology of the Hitchin system. We place ourselves in a restrictive set-up just to present certain ideas.
Let $C$ be a smooth curve of genus $g$ and denote by $\M:=\mathcal{SU}_C(2, \Oo_C)$ the moduli space of semistable rank $2$ vector bundles $E$ on $C$ with $\mbox{det}(E)=\Oo_C$. For a point $[E]\in \mathcal{SU}_C(2, \Oo_C)$ with $E$ stable, we have the following identification $T_{[E]}^{\vee}(\M)=\mathrm{Hom}(E, E\otimes K_C)_0$, where the last symbol refers to the homomorphisms of trace zero. The cotangent bundle $T^{\vee}_{\M}$ can be viewed as the space of \emph{Higgs fields} $(E, \phi)$, where
$\phi:E\rightarrow E\otimes K_C$ is a homomorphism. The \emph{Hitchin map} is defined as
$$H:T^{\vee}_{\M}\longrightarrow H^0(C, K_C^{\otimes 2}), \ \ \mbox{  } H(E, \phi)=\mbox{det}(\phi)\in  H^0(C, K_C^{\otimes 2}).$$
It is proved in \cite{H} that the map $H$ is a completely integrable system, and for a general quadratic differential $q\in H^0(C, K_C^{\otimes 2})$, the fibre $H^{-1}([q])$ equals the Prym variety $\mathrm{Pr}(C_q/C)$, where $C_q$ is the spectral curve whose local equation in the total space of the canonical bundle of $C$ is $y^2=q(x)$.
\vskip 3pt

\subsection{Smooth finite Deligne-Mumford covers of $\mm_g$.} Prym level structures have been used by Looijenga \cite{Lo} to construct Deligne-Mumford
Galois covers of $\mm_g$. These spaces are smooth (as varieties, not only as stacks!), modular and can be used to greatly simplify Mumford's definition of intersection products on $\mm_g$. If $S$ is a compact oriented topological surface of genus $g$, its universal Prym cover is a connected unramified Galois cover $\tilde{S}\rightarrow S$ corresponding to the normal subgroup of $\pi_1(S, x)$ generated by the squares of all elements. The Galois group of the cover is denoted by $G:=H_1(S, \mathbb Z_2)$. A \emph{Prym level $n$-structure} on a smooth curve of genus $g$ is a class of orientation preserving homeomorphisms $f:S\rightarrow C$, where two such homeomeorphisms $f, f'$ are identified, if the homeomorphism $f^{-1}\circ f':S\rightarrow S$ has the property that its lift, viewed as an orientation preserving homeomeorphism of $\tilde{S}$, acts as an element of $G$ on $H_1(\tilde{S}, \mathbb Z_n)$. The moduli space $\M_g{n \choose 2}$ of smooth curves with a Prym level $n$-structure is a Galois cover of $\M_g$. Remarkably, the normalization $\mm_g{n\choose 2}$ of $\mm_g$ in the function field of $\M_g{n\choose 2}$ is a \emph{smooth variety} for even $n\geq 6$.  Therefore $\mm_g$ is the quotient of a smooth variety by a finite group!
\vskip 4pt

\section{Parametrization of $\R_g$ in small genus}
%(Joint work with Alessandro Verra.)

We summarize the current state of knowledge about the birational classification of $\R_g$ for small genus. Firstly, $\R_1=X_0(2)$ is rational. The rationality of $\R_2$ is classical and several modern proofs exist in the literature. We sketch the details of one possible approach following \cite{Do2}.

\begin{theorem}\label{Dolgachev} $\R_2$ is rational.
\end{theorem}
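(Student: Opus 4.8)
The plan is to make $\R_2$ completely explicit as a quotient of an affine space by a small group, and then to settle rationality by a classical invariant-theoretic computation.

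\emph{First}, I would use the fact that every smooth genus-$2$ curve is hyperelliptic: writing $C\colon y^2 = f(x)$ with $f$ of degree $6$ and distinct roots, $\M_2$ is birational to the space of $6$ unordered points on $\P^1$ modulo $\mathrm{PGL}_2$. The nonzero $2$-torsion classes of $\mathrm{Jac}(C)$ are classically described in terms of the set $W\subset\P^1$ of branch points: every even-cardinality subset $T\subseteq W$ gives the class $\sum_{p\in T}[p]-\tfrac{|T|}{2}g^1_2\in\mathrm{Jac}(C)[2]$, with $T$ and $W\setminus T$ yielding the same class; there are $2^{2g}-1=15$ of them, represented by the $15$ two-element subsets of $W$. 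Hence $\R_2$ is birational to the moduli space of configurations of $6$ unordered points on $\P^1$ with one unordered pair distinguished, modulo $\mathrm{PGL}_2$.

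\emph{Next}, I would move the distinguished pair to $\{0,\infty\}$ with $\mathrm{PGL}_2$. The residual group is the stabilizer $N=\mathbb{G}_m\rtimes\Z/2$ of $\{0,\infty\}$, generated by the scalings $x\mapsto\lambda x$ and the inversion $\iota\colon x\mapsto 1/x$. The four remaining unordered points lie in $\mathbb{G}_m=\P^1\setminus\{0,\infty\}$ and are recorded by the elementary symmetric functions $(\sigma_1,\dots,\sigma_4)$ of their coordinates (with $\sigma_4\neq 0$), so $\R_2$ becomes birational to $\mathbb{A}^4/N$, where $\mathbb{G}_m$ acts with weights $(1,2,3,4)$ and $\iota$ sends $(\sigma_1,\sigma_2,\sigma_3,\sigma_4)$ to $(\sigma_3/\sigma_4,\sigma_2/\sigma_4,\sigma_1/\sigma_4,1/\sigma_4)$. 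Taking the two quotients in turn: on the dense locus $\sigma_1\neq 0$ each $\mathbb{G}_m$-orbit has a unique representative with $\sigma_1=1$, so $\mathbb{A}^4/\mathbb{G}_m$ is birational to $\mathbb{A}^3$ with coordinates $(u,v,w):=(\sigma_2,\sigma_3,\sigma_4)$, on which $\iota$ descends — after rescaling to restore $\sigma_1=1$ — to the birational involution
\[
\tau\colon (u,v,w)\longmapsto\Bigl(\frac{uw}{v^2},\ \frac{w^2}{v^3},\ \frac{w^3}{v^4}\Bigr).
\]
To finish I would observe that $t:=w/v$ is $\tau$-invariant, that $\tau(v)=t^2/v$ and $\tau(u)=ut/v$, and hence that $p:=u/v$ and $q:=u/t$ are exchanged by $\tau$. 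A short check shows $(p,q,t)$ is a system of birational coordinates on $\mathbb{A}^3$ in which $\tau$ is the transposition $p\leftrightarrow q$, whence
\[
k(\R_2)\cong k(p,q,t)^{\langle\tau\rangle}=k\bigl(p+q,\ pq,\ t\bigr),
\]
a purely transcendental extension of degree $3$, which is exactly the claim.

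\emph{The main obstacle} is this last step: a $\Z/2$-quotient of a rational threefold is not automatically rational, and the involution $\tau$ is genuinely nonlinear, so there is no formal shortcut — one must produce the linearizing coordinates $(p,q,t)$, equivalently enough rational $\tau$-invariants, by hand. Everything else (the description of $\mathrm{Jac}(C)[2]$ and the bookkeeping of the $\mathrm{PGL}_2$- and $N$-actions) is routine, but must be done carefully to ensure no finite-stabilizer contribution is lost; this is fine here because at each reduction we restrict to a dense open set on which, after the normalizations, the acting group acts with trivial generic stabilizer.
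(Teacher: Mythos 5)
Your argument is correct, and the computations check out: the action of the inversion on the elementary symmetric functions, the descent of $\iota$ to the slice $\sigma_1=1$ (rescaling by $\lambda=w/v$ indeed gives $\tau(u,v,w)=(uw/v^2,\,w^2/v^3,\,w^3/v^4)$), the invariance of $t=w/v$, the fact that $p=u/v$ and $q=uv/w$ are swapped and that $(p,q,t)$ is a birational coordinate system (one recovers $u=qt$, $v=qt/p$, $w=qt^2/p$), and the standard identification $k(p,q,t)^{\langle\tau\rangle}=k(p+q,pq,t)$ via Artin's theorem. Your route is, however, genuinely different from the one in the text. The paper (following Dolgachev) keeps the datum $[C,\eta]$ intrinsic: it embeds $C$ by $L=K_C^{\otimes 2}\otimes\eta=K_C(p+q)$ as a nodal plane quartic whose two branches at the node are flexed, normalizes the equation to $x^2yz+f_4(y,z)=0$, and exhibits $\R_2$ as a torus quotient of the space of coefficients $\mathbb C^5$; rationality then follows because a generically free quotient of an affine space by a torus is rational (a linear-algebra fact about monomial actions). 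You instead pass through the hyperelliptic branch divisor, reducing to a configuration space of $6$ points with a marked pair, and the price you pay is that the residual finite symmetry survives as a nonlinear birational involution $\tau$ that must be linearized by hand — which you do. The paper's normal form absorbs all the finite symmetry into the choice of coordinates on $\PP^2$, so only a connected torus remains and no invariant theory of finite groups is needed; your approach is more elementary in its inputs (no need for the quartic model or the vanishing-sequence analysis at the node) but requires the explicit construction of the invariants $p+q$, $pq$, $t$. Both correctly handle the generic-stabilizer issue, and both yield $k(\R_2)$ purely transcendental of degree $3$.
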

\begin{proof}  Suppose that $C$ is a smooth curve of genus $2$. The $15$ non-trivial points of order $2$ on $C$ are in bijective correspondence to sums $p+q\in C_2$ of distinct Weierstrass points on $C$. Thus if $[C, \eta]\in \R_2$, there exist unique Weierstrass points $p\neq q\in C$ such that $K_C\otimes \eta=\Oo_C(p+q)$. One considers the line bundle $L:=K_C^{\otimes 2}\otimes \eta=K_C(p+q)\in \mbox{Pic}^4(C)$. By applying Riemann-Roch, the image of the map
$\phi_L:C\rightarrow \Gamma \subset \PP^2$ is  a plane quartic curve $\Gamma$ with a singular point $u:=\phi_L(p)=\phi_L(q)$.
Furthermore, both vanishing sequences of the linear series $|L|$ at the points corresponding to the node $u$ are equal to
$$a^L_C(p)=a^L_C(q)=(0, 1, 3),$$
that is, the tangent lines at $u$ to the two branches of $\Gamma$, intersect the nodal curve $\Gamma$ with multiplicity $4$. Accordingly, setting $u:=[1:0:0]\in \PP^2$, the plane equation for $\Gamma$ in coordinates $[x:y:z]$ can be given as
$$x^2yz+xyzf_1(y, z)+f_4(y, z)=0,$$
where $f_1(y, z)$(respectively $f_4(y, z)$) is a  linear (respectively quartic) form. By a judicious change of coordinates, one may assume $f_1=0$ and then $\R_2$ is birational to a quotient of $\mathbb C^5$ by a $2$-dimensional torus, which is a rational variety.
\end{proof}

\ \ \ \ For the rationality of $\R_3$ we mention again \cite{Do2}. There exists an alternative  approach due to Katsylo \cite{Ka}. The space $\R_4$ is rational \cite{Ca} and we shall soon return to this case. There are two different proofs of the unirationality of  $\R_5$ in  \cite{IGS} and \cite{V2} respectively.

\vskip 4pt

\ \ \ \  The case of $\R_6$ is the most beautiful and richest from the geometrical point of view. Observing that $\mbox{dim}(\R_6)=\mbox{dim}(\A_5)=15$, one expects the  Prym map $$\mathrm{Pr}_6:\R_6\ra \A_5$$ to be generically finite, therefore also dominant.  By degeneration methods, Wirtinger \cite{Wi} showed this indeed to be the case. Much later, Donagi and Smith \cite{DS} proved that its degree is equal to $27$ which suggests a connection to cubic surfaces. We cannot resist quoting from \cite{DS} p.27: \emph{Wake an algebraic geometer in the dead of the night whispering "27". Chances are, he will respond: "lines on a cubic surface"}.  Donagi \cite{D2} subsequently showed that the Galois group of $\R_6$ over $\A_5$, that is, the monodromy groups of $\mathrm{Pr}_6$, is equal to the Weyl group $W(E_6)\subseteq \mathfrak{S}_{27}$. We recall that $W(E_6)$ is the group of symmetries of the set of lines on a cubic surface. Precisely, if $X\subset \PP^3$ is a fixed smooth  cubic surface and $\{l_1, \ldots, l_{27}\}$ is a numbering of its $27$ lines, then $$W(E_6):=\{\sigma\in \mathfrak{S}_{27}: l_{\sigma(i)}\cdot l_{\sigma(j)}=l_i\cdot l_j \ \mbox{ for all }\  i, j=1, \ldots, 27\}.$$
The statement that $\R_6$ (and hence $\A_5$) is unirational admits at least three very different proofs due to Donagi \cite{D1} using intermediate Jacobians of Fano $3$-folds, Mori and Mukai \cite{MM}, and  Verra \cite{V1} who used the fact that a general Prym curve $[C, \eta]\in \R_6$ can be viewed as a section of an Enriques surface.
\vskip 3pt
\ \ \ \ Having reached this point, one might wonder for which values of $g$ is  $\R_g$ unirational. The following result \cite{FL}
(to be explained in some detail in the next chapters), provides an upper bound on the genus $g$ where one may hope to have an explicit  unirational description of the general Prym curve $[C, \eta]\in \R_g$.

\begin{theorem}
 The Deligne-Mumford compactification $\ovl{\R}_g$ of $\R_g$ is a variety of general type for $g\geq 14$. Furthermore, $\kappa(\ovl{\R}_{12}) \geq 0$, in particular $\ovl{\R}_{12}$ cannot be uniruled.
\end{theorem}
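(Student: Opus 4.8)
The plan is to prove that $\ovl{\R}_g$ is of general type for $g \geq 14$ by the standard strategy for establishing that a moduli space has maximal Kodaira dimension: exhibit an effective divisor class $D$ on $\ovl{\R}_g$ that lies, as a $\mathbb{Q}$-divisor, below the canonical class $K_{\ovl{\R}_g}$, and control the singularities of $\ovl{\R}_g$ so that pluricanonical forms extend over a resolution. First I would set up the boundary and $\Pic$ calculus on $\ovl{\R}_g$: the forgetful map $\pi\colon \ovl{\R}_g \to \ovl{\M}_g$ is finite, its boundary decomposes as $\delta_0^{\mathrm{ram}}, \delta_0^{'}, \delta_0^{''}$ over $\delta_0 \subset \ovl{\M}_g$ together with further components over $\delta_i$ for $1 \le i \le \lfloor g/2\rfloor$, and $\Pic(\ovl{\R}_g)_{\mathbb{Q}}$ is freely generated by $\lambda$ and these boundary classes. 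The key formula is the expression of the canonical class: using Riemann–Hurwitz for $\pi$ together with the known $K_{\ovl{\M}_g} = 13\lambda - 2\delta_0 - 3\delta_1 - \cdots$, one writes $K_{\ovl{\R}_g}$ as an explicit combination $13\lambda - 2(\delta_0^{'} + \delta_0^{''}) - 3\delta_0^{\mathrm{ram}} - \cdots$ (the coefficient $3$ on the ramification divisor is the crucial point, coming from the fact that $\pi$ is ramified there with a $(1,1,1)$-type branching, equivalently that $\delta_0^{\mathrm{ram}}$ carries an extra contribution). I would cite or reprove this boundary canonical-class formula, since everything downstream rests on it.

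Next I would produce the effective divisor $D$. The natural candidate is a \emph{Koszul} or \emph{syzygy-theoretic} divisor: the locus of Prym curves $[C,\eta]$ for which the Prym-canonical embedding (or a related twisted linear system $K_C \otimes \eta$, or a Brill–Noether-type condition on the paracanonical system) fails to have the expected syzygies, i.e. where a certain Koszul cohomology group jumps. When $g$ is such that the relevant vector bundle has equal rank source and target on the generic Prym curve, the degeneracy locus of the associated map of vector bundles over $\ovl{\R}_g$ is a genuine divisor, and its class is computed by a Grothendieck–Riemann–Roch / Chern-class calculation in terms of $\lambda$ and the boundary. One then checks that for $g \geq 14$ the resulting class $[D]$ satisfies $[D] = a\lambda - \sum b_i (\text{boundary})$ with $a/b_i$ strictly smaller than the corresponding ratios $13/2$, $13/3$, etc., in $K_{\ovl{\R}_g}$; equivalently $K_{\ovl{\R}_g} - \epsilon\,\delta - \text{(effective)}$ is a positive multiple of $[D]$ plus effective boundary. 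The sister statement $\kappa(\ovl{\R}_{12}) \geq 0$ follows from the \emph{same} divisor computation: at $g = 12$ the syzygy divisor class turns out to be proportional to $K_{\ovl{\R}_{12}}$ (up to effective boundary), so $K_{\ovl{\R}_{12}}$ is itself effective, whence $\kappa \geq 0$ and in particular $\ovl{\R}_{12}$ is not uniruled.

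The singularities step must be dealt with in parallel: to conclude general type from $K$ being big one needs that $\ovl{\R}_g$ has canonical (or at worst log-canonical with the boundary) singularities so that pluricanonical forms on the smooth locus extend to any resolution. Here I would invoke the Reid–Tai criterion applied to the finite quotient singularities of $\ovl{\R}_g$ coming from automorphisms of Prym curves, essentially following the analysis known for $\ovl{\M}_g$ and its level covers (the excerpt already alludes to smooth Prym-level covers of $\ovl{\M}_g$, which can be used to reduce the singularity analysis), together with a separate check that the boundary divisors do not force a discrepancy obstruction — the dangerous locus being the so-called \emph{elliptic tail}/quasi-reflection type points, which must be excluded or handled by a blow-up as in the $\ovl{\M}_g$ case.

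The hard part will be the \emph{construction and class computation of the effective divisor} $D$ with sharp enough slope: one needs a modular divisor whose $\lambda$-coefficient is small relative to its boundary coefficients, and the borderline nature of $g = 12, 13, 14$ means the Chern-class bookkeeping (especially the contributions of the twisted boundary components $\delta_0^{\mathrm{ram}}, \delta_0^{'}, \delta_0^{''}$, where the Prym structure degenerates) has to be done exactly, with no room for slack. Verifying the expected-rank hypothesis — i.e. that the relevant Koszul/Petri-type map is generically an isomorphism on Prym curves in these genera, so that the degeneracy locus really is a divisor and not all of $\ovl{\R}_g$ — is itself a nontrivial input (a Prym analogue of the Green–Lazarsfeld / Voisin genericity statements), and is where I expect most of the real work to lie.
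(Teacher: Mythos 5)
Your strategy for even genus coincides with the paper's: compute $K_{\ovl{\R}_g}$ via Riemann--Hurwitz from the Harris--Mumford formula (with the coefficient $-3$ on $\delta_0^{\mathrm{ram}}$ coming from the simple branching there), control singularities by the Reid--Shepherd-Barron--Tai criterion so that pluricanonical forms extend (the dangerous locus being exactly the elliptic-tail quasi-reflection points you mention), and produce the effective divisor as the Koszul degeneracy locus $\U_{g,i}=\{K_{i,2}(C,K_C\otimes\eta)\neq 0\}$ of the Prym-canonical curve, which is of expected codimension one precisely when $g=2i+6$. Your reading of $g=12$ is also exactly right: at $i=3$ the slope $\frac{3(2i+7)}{i+3}$ equals $\frac{13}{2}$, so the virtual class of $\overline{\U}_{12,3}$ is proportional to $K_{\ovl{\R}_{12}}$ on the relevant coefficients and one gets $\kappa\geq 0$ rather than general type. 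And you correctly isolate the genuine hard input, namely the generic non-degeneracy of the Koszul map (the Prym--Green conjecture), which in the paper is not proved in general but verified by degeneration and Macaulay2 in the bounded range of even genera needed (for $g\geq 22$ one simply pulls back the general-type statement from $\ovl{\M}_g$).

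The genuine gap is that your construction only covers \emph{even} genus. The Koszul locus is a (virtual) divisor only when the source and target of the restriction map have equal dimension, which forces $g=2i+6$; for odd $g$ it is either empty of divisorial meaning or all of $\R_g$, and your proposal offers no replacement. The theorem, however, asserts general type for all $g\geq 14$, including $g=15,17,19,21$. The paper handles odd genus $g=2i+1$ by an entirely different divisor: $\mathcal{D}_{2i+1}=\{[C,\eta]:\eta\in C_i-C_i\}$, reinterpreted determinantally as $\{H^0(C,\wedge^i Q_C\otimes\eta)\neq 0\}$ via the theorem of Farkas--Musta\c{t}\u{a}--Popa identifying the Raynaud theta divisor $\Theta_{\wedge^i Q_C}$ with the difference variety $C_i-C_i$. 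This has its own class computation (Theorem \ref{hyper}) and, unlike the even-genus case, requires no unproven genericity conjecture. Without some divisor of small slope in odd genus, your argument as written proves general type only for even $g\geq 14$, so you need either this difference-variety divisor or another construction to close the statement.
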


Allowing us to speculate a little further, by analogy with the case of the spin moduli space of curves, it seems plausible that $\ovl{\R}_g$ is not of general type for $g\leq 11$. In what follows we shall confirm this expectation for $g=7, 8$. As for the remaining cases $g=9, 10, 11$, hardly anything seems to be known at the moment.

\subsection{Nikulin K3 surfaces}

Our next aim is to find a uniform way of parametrizing $\R_g$ for $g\leq 7$. To that end, we consider the following general situation.
Let $S$ be a smooth K3 surface and $E_1,\ldots, E_N$ a set of disjoint, smooth rational $(-2)$-curves on $S$. One may ask when is the class  $E_1+\cdots+E_N$ divisible by two (even), that is,
$$e^{\otimes 2}=\Oo_S(E_1+\cdots+E_N)$$
for a suitable class $e\in \mbox{Pic}(S)$.
Equivalently, there exists a double cover $$\epsilon:\widetilde{S}\rightarrow S$$ branched precisely along the curves $E_1, \ldots, E_N$. Note that $\epsilon^{-1}(E_i)\subset \widetilde{S}$ are $(-1)$-curves which can be blown-down and the resulting smooth surface has an automorphism permuting the sheets of the double covering. The answer to this question is due to Nikulin \cite{Ni} and only two cases are possible:

(i) $N=16$ and $\widetilde{S}$ is birational to an abelian surface and $S$ itself is a Kummer surface.

(ii) $N=8$ and  $\widetilde{S}$ is also a K3 surface. In this case $(S, e)$ is called a \emph{Nikulin $K3$ surface}.

\ \ \ For a reference to $K3$ surfaces with an even set of rational curves, we recommend \cite{Ni}, \cite{vGS}, whereas for generalities on moduli space of polarized $K3$ surfaces, see \cite{Do3}.  Suppose that $(S, e)$ is a Nikulin
$K3$ surface and $C\subset S$ is a smooth curve with $C^2=2g-2$, such that $C\cdot E_i=0$ for $i=1, \ldots, 8$. Then the restriction $e_C:=e\otimes \Oo_C$ is a point of order $2$ in the Jacobian of $C$. This link between Nikulin $K3$ surfaces and Prym curves prompts us to make the following definition \cite{FV2}:
\begin{definition}
The moduli space of polarized Nikulin surfaces of genus $g$ is defined as the following parameter space:
$$\mathcal{F}_g^{\mathfrak{N}} = \Bigl\{[S, h, e]: h\in \mathrm{Pic}(S) \mbox{ is  nef  }, \ h^2=2g-2, \ \mathrm{Pic}(S) \supseteq \langle E_1,\ldots,E_8,h\rangle,$$
$$
 e=\frac{1}{2}\Oo_S(\sum_{i=1}^8 E_i)\in \mathrm{Pic}(S),\  h\cdot E_i = 0, \ E_i^2=-2, \ E_i\cdot E_j=0 \ \mbox{ for }i\neq j \Bigr\}.$$
\end{definition}
Note that $\mathcal{F}_g^{\mathfrak{N}}$ is an irreducible variety of dimension $11$, see  \cite{Do3}. Nikulin surfaces depend on $11$ moduli because polarized $K3$ surfaces of genus $g$ depend on $19$ moduli, from which one subtracts $8$, corresponding to the number of independent condition being imposed on the lattice $\mathrm{Pic}(S)$. We then consider the  $\PP^g$-bundle over $\mathcal{F}_g^{\mathfrak{N}}$
$$\mathcal{P}_g^{\mathfrak N} := \Bigl\{\bigl([S, h, e], C\bigr): [S, h, e]\in \mathcal{F}_g^{\mathfrak{N}}, \ \ C\subset S, \ C \in |h|\Bigr\}.$$
The restriction line bundle $e_C:=e\otimes C\in \mathrm{Pic}^0(C)_2$ induces an \'etale double cover $\widetilde{C} \ra C.$
As explained above, we have two morphism between moduli spaces
$$\xymatrix{
  & \mathcal{P}_g^{\mathfrak{N}} \ar[dl] \ar[dr]^{\chi_g} & \\
   \mathcal{F}_g^{\mathfrak{N}} & & \R_{g}       \\
                 }$$
where $\chi_g \bigl([S, e, h],C\bigr) := [C, e_C].$
Note that $$\dim(\mathcal{P}_g^{\mathfrak{N}})=11+g \mbox{   and    } \ \dim(\R_g)=3g-3,$$ hence $\dim(\mathcal{P}_g^{\mathfrak{N}}) \geq \dim(\R_g)$ exactly for $g \leq 7$.
It is natural to ask whether in this range $\mathcal{P}_g^{\mathfrak{N}}$ dominates $\R_g$. Since by construction $\mathcal{P}_g^{\mathfrak{N}}$ is a uniruled variety, this would imply (at the very least) the uniruledness of $\R_g$. At this point we would like to recall the following well-known theorem due to Mukai \cite{M1}:
\begin{theorem}\label{mukaik3}
A general curve $[C]\in \M_g$ appears as  a section of a $K3$ surface precisely when $g\leq 11$ and $g\neq 10$.
For $g=10$, the locus
$$\K_{10}:= \{[C]\in \M_{10}: C \hbox{ lies on a K3 surface}\}$$ is a divisor.
\end{theorem}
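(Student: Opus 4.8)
The plan is to study the incidence correspondence
$$\mathcal{G}_g := \bigl\{ \bigl((S,h),\,C\bigr) \,:\, (S,h)\ \text{a primitively polarized K3 surface},\ h^2 = 2g-2,\ C\in|h|\ \text{smooth}\bigr\},$$
together with its two projections, to the moduli space $\mathcal{F}_g$ of polarized K3 surfaces of genus $g$ (irreducible of dimension $19$) and to $\M_g$. Since $H^1(S,\Oo_S)=0$, adjunction gives $h|_C = K_C$, and Riemann-Roch on $S$ gives $h^0(S,h)=g+1$; hence $\mathcal{G}_g\to\mathcal{F}_g$ is generically a $\PP^g$-bundle and $\dim\mathcal{G}_g = 19+g$. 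The assertion that a general $[C]\in\M_g$ lies on a K3 surface is exactly the assertion that the second projection $m_g:\mathcal{G}_g\to\M_g$ is dominant. Because $\dim\mathcal{G}_g-\dim\M_g = (19+g)-(3g-3) = 22-2g$, the map $m_g$ cannot be dominant for $g\ge 12$: its image $\overline{m_g(\mathcal{G}_g)}$ then has codimension at least $2g-22\ge 2$ in $\M_g$. This settles the assertion for $g\ge 12$.

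For the existence direction one must prove $m_g$ dominant for $g\le 9$ and $g=11$. I would proceed case by case, using the classical biregular descriptions of a general polarized K3 surface of genus $g$, and in each case verifying that the differential of $m_g$ is surjective at a general point (equivalently, that every first-order deformation of a general curve section $C$ is induced by a deformation of the pair $((S,h),C)$), which forces dominance. For $g\le 5$ a general K3 surface is a complete intersection (a double plane branched along a sextic for $g=2$; a quartic in $\PP^3$ for $g=3$; a $(2,3)$ complete intersection in $\PP^4$ for $g=4$; a $(2,2,2)$ complete intersection in $\PP^5$ for $g=5$), and the verification is elementary. For $6\le g\le 10$ a general $(S,h)$ is a transverse linear section of a fixed rational homogeneous space (the quintic del Pezzo threefold for $g=6$; the $10$-dimensional spinor variety $\mathbb{S}\subset\PP^{15}$ for $g=7$; $G(2,6)\subset\PP^{14}$ for $g=8$; the Lagrangian Grassmannian $LG(3,6)\subset\PP^{13}$ for $g=9$; the adjoint $G_2$-fivefold in $\PP^{13}$ for $g=10$); and for $g=11$ one invokes Mukai's more delicate description of general genus-$11$ K3 surfaces via the prime Fano threefold $V_{22}$ and moduli of rank-two sheaves on the surface. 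Here $\dim\mathcal{G}_{11}=\dim\M_{11}=30$, so $m_{11}$ must in addition be generically finite. I expect this existence direction to be the principal obstacle, and above all the case $g=11$, which rests on the full force of Mukai's classification of Fano threefolds and K3 surfaces via vector bundles and admits no complete-intersection shortcut.

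The genuinely exceptional case is $g=10$, where $\K_{10}=\overline{m_{10}(\mathcal{G}_{10})}$. The naive count gives $\dim\mathcal{G}_{10}-\dim\M_{10}=2>0$, which would predict dominance, and yet $m_{10}$ is \emph{not} dominant; the obstruction is a rigidity phenomenon. A general genus-$10$ K3 surface $S$, being a linear section of the $G_2$-fivefold, carries a distinguished rigid rank-two bundle $\mathcal{E}$ (the restriction of the tautological rank-two bundle on the $G_2$-Grassmannian), with $\det\mathcal{E}=h$ and $h^0(S,\mathcal{E})=7$; restricting it to a smooth $C\in|h|$ produces a rank-two bundle $E_C$ on $C$ with $\det E_C=K_C$ and $h^0(C,E_C)=7$. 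But on a general curve of genus $10$ the Brill-Noether locus of rank-two bundles with canonical determinant and seven sections is empty: its expected dimension inside the $(3g-3)=27$-dimensional moduli space of bundles with $\det=K_C$ is negative. Hence a general curve of genus $10$ admits no such bundle, and $\K_{10}\subsetneq\M_{10}$. To see that $\K_{10}$ is in fact a divisor (a fact which, crucially, is \emph{not} forced by any Brill-Noether count), one analyzes the fibres of $m_{10}$ over its image: Mukai shows that a general $[C]\in\K_{10}$ carries a positive-dimensional family of such bundles $E$, that the pair $(C,E)$ reconstructs a genus-$10$ K3 surface containing $C$, and a dimension count then gives that the generic fibre of $m_{10}$ is three-dimensional, so that $\dim\K_{10}=(19+10)-3=26=\dim\M_{10}-1$. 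I regard this last step, pinning down that $\K_{10}$ has codimension exactly one, as the subtlest point of the whole argument.
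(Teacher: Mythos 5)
Your overall architecture is the right one, and it agrees with what the paper actually does: Theorem \ref{mukaik3} is quoted from Mukai \cite{M1} without proof, and the only argument the paper supplies is the $g=10$ dimension count, namely that a genus-$10$ curve lying on a K3 surface is a codimension-$4$ linear section of the $G_2$-fivefold $X=G_2/P\subset \PP^{13}$ and hence lies on a $3$-dimensional family of K3 sections (the $\PP^3$ of $\PP^{10}$'s containing a fixed $\PP^9$), giving $\dim(\K_{10})=19+10-3=26$. Your proposal reproduces exactly this count, merely phrasing the $3$-dimensional fibre via Mukai's rank-two bundle reconstruction rather than via linear sections of $G_2/P$; the two are equivalent, and your Brill--Noether computation $3g-3-\binom{8}{2}=27-28=-1$ is the standard explanation of why $m_{10}$ fails to dominate despite the positive naive count. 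Your treatment of $g\geq 12$ (codimension $\geq 2g-22\geq 2$) and the case-by-case verification of dominance for $g\leq 9$ and $g=11$ via Mukai's projective models is the standard route. Two small caveats: for $g=6$ a general polarized K3 is a \emph{quadric} section of the quintic del Pezzo threefold $G(2,5)\cap\PP^6$, not a linear section of a homogeneous space (this does not change the argument, only the model in which the differential of $m_6$ is checked); and, as you correctly acknowledge, the cases $g=11$ and the precise fibre dimension at $g=10$ carry the real weight of the theorem and rest entirely on Mukai's results, so your write-up is a proof outline whose hard kernel is imported, exactly as in the paper.
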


The fact that the general curve $[C]\in \M_{10}$ does not lie on a $K3$ surface comes as a surprise, and is due to the existence of the rational homogeneous $5$-fold $$X:=G_2/P\subset \PP^{13}$$ such that $K_X=\Oo_X(-3)$. Thus codimension $4$ linear sections of $X$ are canonical curves of genus $10$; if a curve $[C]\in \M_{10}$ lies on a $K3$ surface, then it lies on a $3$-dimensional family of $K3$ surfaces. This affects the parameter count for genus $10$ sections of $K3$ surfaces and one computes that $$\mbox{dim}(\K_{10})=19+g-3=26=\mbox{dim}(\M_{10})-1.$$ The divisor $\K_{10}$ plays an important role in the birational geometry of $\mm_g$. It is an extremal point of the effective cone of divisors of $\mm_{10}$ and it was the first counterexample to the Harris-Morrison Slope Conjecture, see \cite{FP}.
\vskip 4pt

\ \ \ In joint work with A. Verra \cite{FV2}, we have shown that one has  similar results (and much more) for Prym curves, the role of ordinary $K3$ surfaces being played by
Nikulin surfaces. The following result is quoted from \cite{FV2}:

\begin{theorem}\label{nikulin}
We fix an integer $g \leq 7$, $g \neq 6$. A general Prym curve $[C,\eta]\in \R_g$ lies on a Nikulin surface, that is, the rational map $\chi_g:\mathcal{P}_g^{\mathfrak{N}}\rightarrow \R_g$ is dominant.
\end{theorem}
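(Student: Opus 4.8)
The plan is to carry out a parameter count refined to an actual dominance statement, exactly as in Mukai's treatment of ordinary $K3$ surfaces (Theorem \ref{mukaik3}), transported to the Nikulin setting. Recall that $\dim(\mathcal{P}_g^{\mathfrak{N}}) = 11 + g$ and $\dim(\R_g) = 3g-3$, so $\dim(\mathcal{P}_g^{\mathfrak{N}}) - \dim(\R_g) = 14 - 2g \geq 0$ precisely for $g \leq 7$, with equality only at $g=7$. Thus for $g \leq 7$ the map $\chi_g$ could in principle be dominant, and the content of the theorem is that it genuinely is (away from $g=6$, where the numerology allows it but the geometry obstructs it, just as $\K_{10}$ obstructs Mukai's picture in genus $10$). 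The strategy is: (1) produce one Prym curve $[C_0,\eta_0] \in \R_g$ lying on a Nikulin surface for which one can control the differential of $\chi_g$; (2) show this differential is surjective at the chosen point; (3) conclude that the image of $\chi_g$ contains a dense open subset, hence $\chi_g$ is dominant. Generic smoothness / the standard upper-semicontinuity of fiber dimension then upgrades a single good point to a dominant morphism.

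First I would set up the tangent-space bookkeeping. Over a point $\bigl([S,h,e],C\bigr) \in \mathcal{P}_g^{\mathfrak{N}}$ with $C \in |h|$ smooth and $C \cdot E_i = 0$, the deformations of the pair $(S, C)$ that preserve the Nikulin lattice $\langle E_1,\ldots,E_8,h\rangle$ map to the deformations of the polarized curve, and the restriction $e_C = e \otimes \Oo_C$ of the square root is automatically carried along. The key linear-algebra input is the identification of $T_{[C,\eta]}\R_g$ with $H^1(C, T_C)$ (the Prym structure imposes no extra tangent directions, since $\eta$ is a torsion point, a discrete datum) together with an exact-sequence analysis of how $H^1(S, T_S)$ and $H^0(C, N_{C/S})$ surject onto $H^1(C,T_C)$. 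Concretely, from $0 \to T_C \to T_S|_C \to N_{C/S} \to 0$ and the normal bundle sequence one extracts the cokernel of $d\chi_g$ as a certain $H^1$ on $S$; the point is to show this $H^1$ vanishes. This is where the hypothesis $C \cdot E_i = 0$ and the structure of the Nikulin lattice enter: the curves $E_i$ are rigid and disjoint from $C$, so they neither move with $C$ nor obstruct its deformations inside $S$.

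Second, and this is the main obstacle, I would exhibit an explicit Nikulin surface $S$ of each genus $g \in \{2,3,4,5,7\}$ together with a smooth curve $C \in |h|$ for which the relevant cohomology group vanishes. The natural source of examples is Verra's description of general Prym curves in small genus (used already for $g=6$ via Enriques surfaces in the excerpt): for $g \leq 7$ one has very concrete models — complete intersections, plane models with prescribed singularities, or curves on special surfaces — and one must check that such a curve can be placed on a Nikulin surface, i.e., on a $K3$ surface carrying an even eight of disjoint $(-2)$-curves orthogonal to $C$. The delicate part is simultaneously realizing the correct half-period $\eta_0 = e_{C_0}$: one needs the restriction of the Nikulin class $e = \tfrac12\Oo_S(\sum E_i)$ to hit a \emph{general} point of $\Pic^0(C_0)_2$, not one of the special $2$-torsion points (for curves of low genus the $2$-torsion points are not all equivalent under the automorphisms relevant to the construction). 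Verifying surjectivity of $d\chi_{g}$ at $(S,C)$ then reduces — after the cohomological reductions above — to a vanishing like $H^1(S, \I_C \otimes (\text{twist})) = 0$ or equivalently a statement that $C$ imposes independent conditions, which one checks on the explicit model. The exclusion of $g=6$ is then explained exactly as $\K_{10}$ is: there the relevant ambient object (a $G_2$-homogeneous space or its Nikulin analogue) forces curves on Nikulin surfaces to move in positive-dimensional families of such surfaces, so the parameter count drops by the excess and $\chi_6$ fails to dominate — but since the statement excludes $g=6$, I would not need to prove that failure, only to note why the uniform argument breaks there.

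Finally I would assemble the pieces: having checked that $\chi_g$ has surjective differential at one point of $\mathcal{P}_g^{\mathfrak{N}}$ for each $g \leq 7$, $g \neq 6$, and knowing $\R_g$ is irreducible, the map is dominant; since $\mathcal{P}_g^{\mathfrak{N}}$ is visibly uniruled (it fibers in $\PP^g$'s over $\mathcal{F}_g^{\mathfrak{N}}$), one also recovers the uniruledness of $\R_g$ in this range as an immediate corollary. The real work is entirely in the explicit geometry of Step two — producing the surface-plus-curve witnesses with the right half-period — and in the cohomological reduction that turns "$d\chi_g$ surjective" into a single concrete vanishing; everything else is soft.
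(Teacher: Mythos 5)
Your proposal is a strategy outline rather than a proof: the two substantive ingredients --- (a) exhibiting, for each $g\in\{2,3,4,5,7\}$, an explicit Nikulin surface $[S,h,e]$ with a smooth $C\in|h|$ at which the differential of $\chi_g$ can be analyzed, and (b) carrying out the cohomological reduction and proving the vanishing that makes $d\chi_g$ surjective there --- are both deferred, and you acknowledge as much ("the real work is entirely in Step two"). Neither step is routine. In particular, the identification of $\operatorname{coker}(d\chi_g)$ with an $H^1$ on $S$ requires tracking not just deformations of the pair $(S,C)$ but deformations of $S$ \emph{inside the Nikulin locus} $\mathcal{F}_g^{\mathfrak{N}}$, i.e.\ deformations preserving the full rank-$9$ lattice $\langle E_1,\dots,E_8,h\rangle$, and your sketch does not explain how the eight classes $E_i$ enter that computation beyond the remark that they are "rigid and disjoint from $C$." Without the witness and the vanishing, there is no proof; and your stated explanation of the $g=6$ exclusion (a $G_2$-type excess) is speculative --- in fact $\operatorname{Im}(\chi_6)$ is the ramification divisor of the Prym map $\mathrm{Pr}_6$, which is the true analogue of $\mathcal{K}_{10}$.

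The paper's argument (given for $g=7$) is entirely different and avoids both difficulties. It starts from the \emph{general} $[C,\eta]\in\R_7$ and constructs the Nikulin surface directly: the Prym-canonical line bundle $K_C\otimes\eta$ is very ample (else $\eta\in C_2-C_2$ and $C$ is tetragonal, contradicting generality), the embedded curve $C\subset\PP^5$ lies on exactly a net of quadrics ($h^0(\mathcal{I}_{C/\PP^5}(2))=21-18=3$ by quadratic normality), and the base locus of that net is a smooth K3 surface $S$. One then shows that the class $\Oo_S(2H-2C)$ is effective with a one-dimensional space of sections and has square $-16$, forcing it to be a sum of eight disjoint lines, so $S$ is a Nikulin surface with $e_C=\eta=\Oo_C(C-H)$. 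Because the input curve was arbitrary general, dominance of $\chi_7$ is immediate --- no tangent-space computation, no generic smoothness, and no need to hunt for a special witness with a sufficiently general half-period. If you want to salvage your deformation-theoretic route, you would still need to supply the explicit examples and the $H^1$-vanishing; the constructive route is both shorter and yields more (an intrinsic description of the Nikulin surface as the base locus of the quadrics through the Prym-canonical curve).
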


\noindent
\begin{proof} We discuss the proof only in the case $g=7$ and start with a general element $[C, \eta]\in \R_7$. We consider the Prym-canonical
embedding $$\phi_{K_C\otimes \eta}:C\rightarrow \PP^5.$$ Note that $K_C\otimes \eta$ is very ample, for otherwise $\eta\in C_2-C_2$, in particular $C$ is tetragonal, which contradicts the generality assumption on the pair $[C, \eta]$.
It is shown in \cite{FV2} that  $h^0(\PP^5, \mathcal I_{C / \PP^5}(2)) = 3$, that is,  $|\mathcal I_{C /\PP^5}(2)|$ is a net of quadrics and the base locus of this net is a smooth K3 surface $S\subset \PP^5$.

\ \ \ We claim that $S$ is a Nikulin $K3$ surface.  Let $L := \Oo_S(2H -2C)\in \mathrm{Pic}(S)$ and consider the standard exact sequence
$$
0 \longrightarrow L\otimes \Oo_S(-C) \longrightarrow  L \longrightarrow L\otimes \Oo_C \longrightarrow 0.
$$
Note that $L\otimes \Oo_C=\Oo_C(2H-2K_C)=\Oo_C$. Furthermore  $h^1(S, L\otimes \Oo_S(-C)) = h^1(S, 2H-C) = 0$, because $C$ is quadratically normal. Passing to the long exact sequence, it follows $h^0(S, L) = 1$. The numerical characters of $L$ can be computed as follows:
 $L\cdot H = 8$ and $L^2 = -16$. After analyzing all possibilities, it follows that $L$ is equivalent to the sum of 8 disjoint lines.
 Furthermore $\eta=\Oo_C(C-H)$, which proves that $[C, \eta]=\chi_7([S, \Oo_S(C-H)]$, that is, $C$ lies on a Nikulin surface.
\end{proof}

\ \ \ \ Theorem \ref{nikulin} shows that $\R_g$ is uniruled for $g\leq 7$.
As in Mukai's Theorem \ref{mukaik3}, the genus next to maximal, proves to be exceptional. Let us denote by $\mathcal{N}_6:=\mbox{Im}(\chi_6)$ the locus of Prym curves which are sections of Nikulin surfaces.
\begin{theorem}
One has the following identification of effective divisors on $\R_6$:
$$\mathcal{N}_6= \Bigl\{[C,\eta] \in \R_6:\mathrm{Sym}^2 H^0(C, K_C\otimes \eta) \rightarrow H^0(C, K_C^{\otimes 2}) \mbox{ is not an isomorphism}\Bigr\}.$$ This locus equals the ramification divisor of the Prym map $\mathrm{Pr}_6:\R_6\ra \A_5$.
\end{theorem}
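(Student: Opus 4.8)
The plan is to prove the two identifications in the statement separately: first that the ramification divisor of $\mathrm{Pr}_6$ is the locus where the multiplication map $\mathrm{Sym}^2 H^0(C,K_C\otimes\eta)\to H^0(C,K_C^{\otimes 2})$ degenerates, and then that this locus is exactly $\mathcal{N}_6$. For the first point, recall that for $[C,\eta]\in\R_g$ with $P:=\mathrm{Pr}(C,\eta)$ a smooth point of $\A_{g-1}$ one has $T_{[C,\eta]}\R_g=H^1(C,T_C)$ (as $\R_g\to\M_g$ is \'etale) and $T_P\A_{g-1}=\mathrm{Sym}^2 H^0(P,\Omega^1_P)$, where $H^0(P,\Omega^1_P)$ is the $\iota$-anti-invariant part of $H^0(\widetilde C,K_{\widetilde C})$, which by (\ref{sj}) and the fact that $\iota$ acts by $-1$ on $\eta$ equals $H^0(C,K_C\otimes\eta)$. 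The classical infinitesimal computation (see e.g. \cite{De}) identifies the codifferential of $\mathrm{Pr}_g$ at $[C,\eta]$ with the multiplication map
$$\mu_{C,\eta}\colon \mathrm{Sym}^2 H^0(C,K_C\otimes\eta)\longrightarrow H^0\bigl(C,(K_C\otimes\eta)^{\otimes 2}\bigr)=H^0(C,K_C^{\otimes 2}),$$
the equality on the right coming from $\eta^{\otimes 2}=\Oc$. Now specialise to $g=6$: then $h^0(C,K_C\otimes\eta)=5$, so both source and target of $\mu_{C,\eta}$ have dimension $\binom{6}{2}=15=3g-3$. Hence $\mu_{C,\eta}$ fails to be an isomorphism exactly when it fails to be injective, i.e. (on the dense open set where $K_C\otimes\eta$ is very ample) exactly when the Prym-canonical curve $\phi_{K_C\otimes\eta}(C)\subset\P^4$ lies on a quadric; call this locus $\mathcal{N}_6^{\mathrm{quad}}$. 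It is the zero locus of the single section $\det(\mu_{C,\eta})$ of a line bundle on $\R_6$, hence an effective divisor, and it is proper since $\mathrm{Pr}_6$ is dominant onto the equidimensional $\A_5$ and so an immersion at a general point. This already gives $\mathcal{N}_6^{\mathrm{quad}}=\{\text{ramification locus of }\mathrm{Pr}_6\}$, so only the equality $\mathcal{N}_6=\mathcal{N}_6^{\mathrm{quad}}$ remains.

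For the inclusion $\mathcal{N}_6\subseteq\mathcal{N}_6^{\mathrm{quad}}$, write $[C,\eta]=\chi_6([S,h,e],C)$ with $(S,h,e)$ a Nikulin surface and $C\in|h|$, $h=\Oo_S(C)$. Adjunction on the K3 surface $S$ gives $\Oo_C(C)=K_C$, hence $(h-e)|_C=K_C\otimes (e|_C)^{-1}=K_C\otimes\eta$ since $2\eta=0$. Riemann--Roch on $S$ gives $(h-e)^2=h^2-2\,h\cdot e+e^2=(2g-2)+0-4=6$, and, after the standard vanishings $h^1(S,h-e)=h^0(S,e-h)=0$, $h^0(S,h-e)=5$. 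Thus $\phi_{h-e}$ maps $S$ onto a surface of degree $6$ in $\P^4$; being a smooth K3 surface it is the complete intersection of a quadric $Q$ and a cubic. The restriction map $H^0(S,h-e)\to H^0(C,K_C\otimes\eta)$ is then an isomorphism, so $\phi_{h-e}|_C$ is the Prym-canonical embedding of $[C,\eta]$, whose image lies on $Q$. Hence $[C,\eta]\in\mathcal{N}_6^{\mathrm{quad}}$.

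For the reverse inclusion, take $[C,\eta]$ a general point of (a component of) $\mathcal{N}_6^{\mathrm{quad}}$, so that $C\subset\P^4$ lies on a unique quadric $Q$. For a suitable cubic $F$ through $C$ the surface $S:=Q\cap F$ is a smooth K3 surface containing $C$ (smoothness being a Bertini argument, with attention to the base curve $C$). Put $H_S:=\Oo_S(1)$ and $e:=\Oo_S(C-H_S)\in\Pic(S)$; then $e^2=-4$, $e\cdot H_S=4$, and $e|_C=\eta$ since $\Oo_C(1)=K_C\otimes\eta$ and $\Oo_C(C)=K_C$. The point is to show that $F$ may be chosen so that the class $2C-2H_S$ is effective with unique effective representative a disjoint union $E_1+\cdots+E_8$ of eight $(-2)$-curves: each $E_i$ is then a line of the quadric threefold $Q$ lying on $S$, automatically a $(-2)$-curve because $\deg N_{E_i/S}=\deg N_{E_i/Q}-3=1-3=-2$. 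One then checks $C\cdot E_i=(H_S+e)\cdot E_i=1+\tfrac12(-2)=0$, so that with $h:=\Oo_S(C)$ the triple $(S,h,e)$ is a Nikulin surface with $C\in|h|$ and $\chi_6([S,h,e],C)=[C,\eta]$, whence $[C,\eta]\in\mathcal{N}_6$. Combining this with the previous paragraph, the irreducibility of $\mathcal{N}_6$ (inherited from that of $\mathcal{F}_6^{\mathfrak{N}}$), and the divisoriality of $\mathcal{N}_6^{\mathrm{quad}}$, one obtains $\mathcal{N}_6=\mathcal{N}_6^{\mathrm{quad}}=\{\text{ramification locus of }\mathrm{Pr}_6\}$.

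The first two paragraphs are essentially bookkeeping once the codifferential formula is granted; the genuine content is the reverse inclusion. The hard part is to prove that a general Prym-canonical curve lying on a quadric actually lies on a \emph{Nikulin} $(2,3)$-complete intersection, i.e. that the cubic $F$ can be chosen so that $Q\cap F$ acquires eight disjoint $(-2)$-curves — equivalently, to rule out that the determinantal divisor $\mathcal{N}_6^{\mathrm{quad}}$ is strictly larger than $\mathcal{N}_6$. This rests on a careful parameter count showing $\dim\mathcal{N}_6=14$ (equivalently, that the general fibre of $\chi_6$ is $3$-dimensional, so that a general Prym curve on a Nikulin surface lies on a $3$-dimensional family of such surfaces — the exact analogue of the $\K_{10}$ phenomenon), together with the positivity analysis on $S$ that produces the eight disjoint $(-2)$-curves: establishing $h^0(S,2C-2H_S)=1$ and then running a case analysis on the effective representative, as in the proof of Theorem \ref{nikulin}, to conclude that it is a sum of eight disjoint lines. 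That last step, where the Nikulin structure is actually extracted, is the technical heart of the argument.
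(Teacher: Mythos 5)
Your overall architecture is the right one, and the parts you complete are sound. The identification of the codifferential of $\mathrm{Pr}_6$ with the multiplication map $\mathrm{Sym}^2 H^0(C,K_C\otimes\eta)\to H^0(C,K_C^{\otimes 2})$ between $15$-dimensional spaces is exactly the reasoning the paper itself gives (in the genus-$6$ discussion of the Prym--Green conjecture, where $\U_{6,0}$ is identified with the ramification divisor of $\mathrm{Pr}_6$); note the paper states the present theorem without proof, quoting \cite{FV2}, so this is the only in-paper material to compare against. Your forward inclusion $\mathcal{N}_6\subseteq\mathcal{N}_6^{\mathrm{quad}}$ is also correct and essentially complete: the numerology $(h-e)^2=6$, $h^0(S,h-e)=5$, $h^0(S,-e)=0$, and the realization of the degree-$6$ model of a general Nikulin surface as a $(2,3)$ complete intersection in $\PP^4$ whose quadric contains the Prym-canonical curve, all check out.

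The genuine gap is the step you yourself label ``the technical heart'': nothing in the proposal establishes either that $\dim\mathcal{N}_6=14$ or that a general point of \emph{every} component of $\mathcal{N}_6^{\mathrm{quad}}$ lies in $\mathcal{N}_6$, and without one of these you have only shown that $\mathcal{N}_6$ is an irreducible constructible subset of the divisor $\mathcal{N}_6^{\mathrm{quad}}$ --- strictly weaker than the stated equality of effective divisors. The reverse inclusion is a real existence problem, not bookkeeping: for $S=Q\cap F$ one has $\chi(S,2C-2H_S)=2+\tfrac{1}{2}(2e)^2=-6<0$, and for a \emph{general} cubic $F$ through $C$ the lattice $\langle H_S, C\rangle$ contains no $(-2)$-classes (the equation $5a^2+10ab+3b^2=-1$ has no solution mod $5$), so $2C-2H_S$ is not effective at all on the generic such $S$. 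You must therefore produce \emph{special} cubics $F$ for which $S$ acquires eight disjoint lines in that class, and no mechanism for doing so is offered --- a Bertini-type argument cannot help, since the required configuration is of high codimension in the space of cubics through $C$. Equivalently, one must rule out extra components of the determinantal divisor $\mathcal{N}_6^{\mathrm{quad}}$; the standard substitutes (irreducibility of the ramification divisor of $\mathrm{Pr}_6$, or a computation of the class of $\overline{\mathcal{N}}_6$ compared with the first Chern class of the degeneracy locus) are likewise not carried out. As written, the proposal is an accurate road map whose decisive step remains unproved.
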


The previous method can no longer work when $g\geq 8$ because Nikulin sections form a locus of codimension at least $2$ in $\R_g$.
Instead we shall sketch an approach to handle the case of $\R_8$ . Full details will appear in the paper \cite{FV3}.

\section{Prym Brill-Noether loci and the uniruledness of $\R_8$}

Some preliminaries on Brill-Noether theory for Prym curves and lagrangian degeneracy loci are needed, see  \cite{M} and \cite{W} for a detailed discussion. We fix a Prym curve $[C, \eta]\in \R_g$ and let $f:\widetilde{C} \ra C$ be the induced \'etale covering map. For a fixed integer $r\geq -1$,  the
 \emph{Prym-Brill-Noether} locus is defined as the following determinantal subvariety of $\Nm_f^{-1}(K_C)$:

\begin{center}
\fbox{$V^r(C,\eta) := \bigl\{L\in \mbox{Nm}_f^{-1}(K_{C}): h^0(\widetilde{C}, L)\geq r+1, \hbox{ } h^0(\widetilde{C}, L)\equiv r+1 \ \mbox{mod}\ 2\bigr\}.$}
\end{center}

The expected dimension of $V^r(C, \eta)$ as a determinantal variety equals $g-1 - {r+1\choose 2}$. In any event, the inequality
$$\mbox{codim}\Bigl(V^r(C, \eta), \mathrm{Pr}(C, \eta)\Bigr)\leq {r+1\choose 2}$$
holds. Note that $V^{-1}(C, \eta)=V^0(C, \eta)=\Pr(C, \eta)$ and $V^1(C, \eta)=\Xi_C$ is the theta-divisor of the Prym variety, in its Mumford incarnation.
\vskip 3pt

\ \ \ \ We fix a line bundle $L \in \Nm_f^{-1}(K_C)$. This last condition is equivalent to $\iota^*L = K_{\widetilde{C}}\otimes L^{\vee}$, where $\iota:\widetilde{C}\rightarrow \widetilde{C}$ is the involution exchanging the sheets of the covering $f:\widetilde{C}\rightarrow C$. Let us consider the Petri map
$$\mu_0(L): H^0(\widetilde{C}, L)\otimes H^0(\widetilde{C}, K_{\widetilde{C}}\otimes L^{\vee}) \rightarrow  H^0(C, K_{\widetilde{C}}).$$
Using the decomposition $H^0(\widetilde{C}, K_{\widetilde{C}})=H^0(C, K_C)\oplus H^0(C, K_C\otimes \eta)$, we can split the Petri map into a $\iota$ anti-invariant part
$$\mu_0^-(L): \Lambda^2 H^0(\widetilde{C}, L) \ra H^0(C, K_C\otimes \eta), \ \ \mbox{  } \ s\wedge t\mapsto s\cdot \iota^*(t)-t\cdot \iota^*(s),$$ and a
$\iota$ invariant part respectively
$$\mu_0^+(L): \mathrm{Sym}^2H^0(\widetilde{C}, L) \ra H^0(C, K_C),\ \ \mbox{  }  s\otimes t+t\otimes s\mapsto s\cdot \iota^*(t)+t\cdot \iota^*(s).$$
Welters calls $\mu_0^-(L)$ the \emph{Prym-Petri} map. The name is appropriate because analogously to the classical Petri map, via the standard identification
$$T_L\Bigl(\mathrm{Pr} (C, \eta)\Bigr)=H^0(C, K_C\otimes \eta)^{\vee}$$ coming from Kodaira-Spencer theory, the map
$\mu_0^{-}(L)$ governs the deformation theory of the loci $V^r(C, \eta)$. We mention the following result, see \cite{W} Proposition 1.9:
\begin{proposition}
Let $L\in \mathrm{Nm}_f^{-1}(K_C)$ with $h^0(\widetilde{C}, L)=r+1$. The Zariski tangent space $T_L(V^r(C, \eta))$ can be identified to
$\Bigl(\mathrm{Im }\  \mu_0(L)^-\Bigr)^\bot$. In particular, $V^r(C, \eta)$ is smooth and of the expected dimension $g-1-{r+1\choose 2}$ at the point
$L$ if and only if $\mu_0^-(L)$ is injective.
\end{proposition}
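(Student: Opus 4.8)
The plan is to realise $V^{r}(C,\eta)$, locally near $L$, as a degeneracy locus of a morphism of vector bundles on the Prym variety $\mathrm{Pr}(C,\eta)$ that is \emph{antisymmetric} with respect to a self-duality coming from Serre duality and the relation $\iota^{*}L\cong K_{\widetilde C}\otimes L^{\vee}$, and then to read the Zariski tangent space off the standard formula for degeneracy loci, following the template of the classical Brill--Noether computation in \cite{ACGH}, Chapter IV, carried out $\iota$-equivariantly as in \cite{M}, \cite{W}. Concretely, I would restrict a Poincar\'e bundle from $\widetilde C\times\Pic^{2g-2}(\widetilde C)$ to $\widetilde C\times\mathrm{Pr}(C,\eta)$, twist by the pull-back of a sufficiently ample effective divisor on $\widetilde C$, and push the resulting short exact sequence forward; on a neighbourhood $U$ of $L$ this produces a complex $\phi\colon\mathcal{E}_{0}\to\mathcal{E}_{1}$ with $\ker\phi_{M}=H^{0}(\widetilde C,M)$ and $\mathrm{coker}\,\phi_{M}=H^{1}(\widetilde C,M)$ for $M\in U$. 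Since $\deg M=2g-2$ gives $\chi(M)=0$ the bundles have equal rank, and since $M\in\Nm_{f}^{-1}(K_{C})$, the pair (Serre duality, the relation $\iota^{*}M\cong K_{\widetilde C}\otimes M^{\vee}$) makes $\phi$ antisymmetric in the sense of \cite{W}. One then checks that $\{\corank\phi\geq r+1\}$ is exactly $V^{r}(C,\eta)$ near $L$: Mumford's theorem that $h^{0}(M)\ \mathrm{mod}\ 2$ is constant on $\mathrm{Pr}(C,\eta)$ makes the parity clause in the definition of $V^{r}$ automatic near $L$, so $\{M:h^{0}(\widetilde C,M)\geq r+1\}$ coincides there with $V^{r}(C,\eta)$ as schemes; and the expected codimension of a $\corank$-$(r+1)$ locus of an antisymmetric map is ${r+1\choose 2}$, matching the stated expected dimension.

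For the tangent space I would use that, for a morphism $\phi\colon\mathcal{E}_{0}\to\mathcal{E}_{1}$ on a smooth variety $X$ and a point $x$ with $\corank\phi_{x}=k$, the Zariski tangent space at $x$ to $\{\corank\phi\geq k\}$ is the annihilator, inside $T_{x}X$, of the image of the intrinsic-derivative map $\ker\phi_{x}\otimes(\mathrm{coker}\,\phi_{x})^{\vee}\to T_{x}^{\vee}X$. Putting $X=\mathrm{Pr}(C,\eta)$, $x=L$, and inserting the Kodaira--Spencer identification $T_{L}\mathrm{Pr}(C,\eta)=H^{0}(C,K_{C}\otimes\eta)^{\vee}$ recalled above, $\ker\phi_{L}=H^{0}(\widetilde C,L)$ and $\mathrm{coker}\,\phi_{L}=H^{1}(\widetilde C,L)\cong H^{0}(\widetilde C,K_{\widetilde C}\otimes L^{\vee})^{\vee}$, the intrinsic derivative becomes the multiplication map $H^{0}(\widetilde C,L)\otimes H^{0}(\widetilde C,K_{\widetilde C}\otimes L^{\vee})\to H^{0}(\widetilde C,K_{\widetilde C})$, i.e. the Petri map $\mu_{0}(L)$, followed by the projection $H^{0}(\widetilde C,K_{\widetilde C})=H^{0}(C,K_{C})\oplus H^{0}(C,K_{C}\otimes\eta)\to H^{0}(C,K_{C}\otimes\eta)$ onto the $(-1)$-eigenspace of $\iota^{*}$, which is $T^{\vee}_{L}\mathrm{Pr}(C,\eta)$. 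Via the isomorphism $H^{0}(\widetilde C,K_{\widetilde C}\otimes L^{\vee})\cong H^{0}(\widetilde C,L)$ induced by $\iota^{*}M\cong K_{\widetilde C}\otimes M^{\vee}$, the symmetric part of $\mu_{0}(L)$ is $\mu_{0}^{+}(L)$ and lands in the $(+1)$-eigenspace $H^{0}(C,K_{C})$, hence dies under the projection, whereas the alternating part is $\mu_{0}^{-}(L)\colon\Lambda^{2}H^{0}(\widetilde C,L)\to H^{0}(C,K_{C}\otimes\eta)$; so the intrinsic derivative has image $\mathrm{Im}\,\mu_{0}^{-}(L)$ and $T_{L}(V^{r}(C,\eta))=\bigl(\mathrm{Im}\,\mu_{0}^{-}(L)\bigr)^{\bot}$. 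It is exactly the antisymmetry of $\phi$ that makes only the $\Lambda^{2}$ and not the $\mathrm{Sym}^{2}$ part relevant --- which dovetails with the codimension count ${r+1\choose 2}$.

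It then remains to conclude the equivalence: from the identification above, $\dim T_{L}(V^{r}(C,\eta))=(g-1)-\dim\mathrm{Im}\,\mu_{0}^{-}(L)$, and since $\dim\Lambda^{2}H^{0}(\widetilde C,L)={r+1\choose 2}$, injectivity of $\mu_{0}^{-}(L)$ is the same as $\dim T_{L}(V^{r}(C,\eta))=g-1-{r+1\choose 2}$. Combining this with the a priori inequality $\mathrm{codim}(V^{r}(C,\eta),\mathrm{Pr}(C,\eta))\leq{r+1\choose 2}$ recalled before the statement (valid at every point, so $\dim_{L}V^{r}(C,\eta)\geq g-1-{r+1\choose 2}$) and the general inequality $\dim_{L}V^{r}(C,\eta)\leq\dim T_{L}(V^{r}(C,\eta))$, one gets that $V^{r}(C,\eta)$ is smooth of the expected dimension at $L$ precisely when $\mu_{0}^{-}(L)$ is injective.

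The main obstacle is the first step: producing the resolution $\phi$ \emph{together with} its $\iota$-equivariant self-duality, and verifying that its degeneracy loci genuinely are the $V^{r}(C,\eta)$, with the correct scheme structure, near $L$. This is where Mumford's parity-constancy statement and Welters's formalism of antisymmetric (lagrangian-type) degeneracy loci do the real work: they are what force the \emph{alternating} Petri map $\mu_{0}^{-}(L)$, rather than the full Petri map, to govern the local geometry of $V^{r}(C,\eta)$. Once that structural input is in place, the tangent-space computation and the numerical conclusion above are the routine deformation-theoretic bookkeeping.
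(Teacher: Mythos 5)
Your proposal is correct and follows essentially the same route as the source the paper cites for this statement (the survey gives no proof of its own, referring to \cite{W}, Proposition 1.9): one realizes $V^r(C,\eta)$ locally as a degeneracy locus of an $\iota$-equivariantly antisymmetric two-term complex built from a Poincar\'e bundle, invokes Mumford's parity-constancy to dispose of the parity clause, and identifies the intrinsic derivative with the Petri map projected onto the anti-invariant part $H^0(C,K_C\otimes\eta)$, so that only $\mu_0^-(L)$ survives and $T_L(V^r(C,\eta))=\bigl(\mathrm{Im}\,\mu_0^-(L)\bigr)^{\bot}$. The concluding dimension count, combining this with the a priori bound $\mathrm{codim}\leq{r+1\choose 2}$ and $\dim_L V^r(C,\eta)\leq\dim T_L(V^r(C,\eta))$, is exactly right.
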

The main result of \cite{W} states that for a general point $[C, \eta]\in \R_g$, the Prym-Petri map $\mu_0^-(L)$ is injective for every $L\in V^r(C, \eta)$.
In particular, $$\mbox{dim } V^r(C, \eta)=g-1-{r+1\choose 2}.$$ The class of  $V^r(C, \eta)$ has been computed by De Concini and Pragacz \cite{DP}. If $\xi=\theta'/2\in H^2(\mathrm{Pr}(C, \eta), \mathbb Z)$ is the class of the principal theta-divisor of $\mathrm{Pr}(C, \eta)$, then
$$\bigl[V^r(C, \eta)\bigr]=\frac{1}{1^r\cdot 3^{r-1}\cdot 5^{r-2}\cdot \ldots \cdot (2r-1)}\xi^{r+1\choose 2}.$$
This formula proves that $V^r(C, \eta)\neq \emptyset$ when $g-1\geq {r+1\choose 2}$.

\vskip 4pt
\ \ \ \ The map $\mu_0^-(L)$, enjoying this deformation-theoretic interpretation, has received a lot of attention. By contrast, its even counterpart $\mu_0^+(L)$ seems to have been completely neglected so far, but this is what we propose to use in order to parametrize $\R_g$ when $g=8$. We define the universal Prym-Brill-Noether locus
$$\R_g^r:=\Bigl\{\bigl([C, \eta], L\bigr): [C, \eta]\in \R_g,\  L\in V^r(C, \eta)\Bigr\}.$$
When $g-1-{r+1\choose 2}\geq 0$, the variety $\R_g^r$ is irreducible, generically smooth of dimension $4g-4-{r+1\choose 2}$ and mapping dominantly onto $\R_g$.

\vskip 4pt

\ \ \ We now fix $g\geq 4$ and turn our attention to the space $\R_g^2$ which has relative dimension $g-4$ over $\R_g$. A general point $\bigl([C, \eta], L\bigr)\in \R_g^2$, corresponds to a general Prym curve $[C, \eta]\in \R_g$ and a base point free line bundle $L$ such that $h^0(\widetilde{C}, L)=3$.
Setting $\PP^2:=\PP\bigl(H^0(L)^{\vee}\bigr)$, we have the following  commutative diagram:
\begin{figure}[h]
$$\xymatrix@R=16pt{\tilde{C} \ar[rr]^-{(L, \iota^{*} L)} \ar[dd]_f && \PP^2\times \PP^2 \ar[dd]_s \ar@{^{}->}[rd] \\
&& & \PP^8=\PP\bigl(H^0 (L)^{\vee}\otimes H^0(L)^{\vee}\bigr) \ar@{-->}@<-1ex>[dl] \\
C\ar[rr]^-{|\mu_0^+(L)|} && *!<-22pt,0pt>{\PP^5=\PP(\mathrm{Sym}^2 H^0(L)^{\vee})}}
$$
\end{figure}

In the above diagram $s$ is a $2:1$ quasi-\'etale morphism and $\mathrm{Im}(s) = D\subset \PP^5$ is the determinantal cubic hypersurface. The branch locus of $s$ is the Veronese surface $\mathrm{Sing}(D)= V_4 \subseteq \PP^5$. It is well-known that $D$ can be identified with the secant variety of $V_4$. For a general $[C, \eta]\in \R_g$ as above, one can show that
$\mu_0^+(L)$ is injective, that is,  $$W:=\mathrm{Sym}^2 H^0(\widetilde{C}, L)\subset H^0(C, K_C)$$ is a $6$-dimensional space of canonical forms on $C$. The map $s$ is given by
$$\PP^2\times \PP^2\ni \Bigl([a], \ [b]\Bigr)\mapsto [a\otimes b+b\otimes a]\in \PP^5.$$ Equivalently, if $\PP^2$ is viewed as the space of lines in $\PP(H^0(L))$, then $s$ maps a pair of lines $([a], [b])$ to the degenerate conic $[a]+[b]\in \PP^5$. Moreover, $D$ is viewed as the space of degenerate conins in $\PP(H^0(L))$.

\ \ \ \
The commutativity
of the diagram implies that the map $\widetilde{C}\stackrel{|W|}\rightarrow \PP^5$ induced by the sections in $W$ has degree $2$ and factors through $C$. The image curve, which is a projection of the canonical model of $C$, lies on the symmetric cubic hypersurface $D$. Before turning to the case $g=8$, we mention the following result:

\begin{theorem}[Verra 2008]
$\R^2_g$ is a unirational variety for $g \leq 7$.
\end{theorem}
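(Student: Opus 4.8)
The plan is to exploit the geometry set up in the diagram preceding the statement: for a general point $\bigl([C,\eta],L\bigr)\in\R_g^2$ we have a degree-two map $C\to \PP^5$ whose image lies on the symmetric determinantal cubic $D$, and $D$ is rational via its parametrization $s:\PP^2\times\PP^2\dashrightarrow D$. The strategy is to reverse this construction: rather than starting from a Prym curve, I would start from a pair of plane curves (or a single curve in $\PP^2\times\PP^2$) whose image under $s$ lies on $D$, show that such data moves in a rational family, and check that the generic member arises from a genuine general element of $\R_g^2$. Concretely, one parametrizes pairs $(\widetilde\Gamma,\iota)$ where $\widetilde\Gamma\subset\PP^2\times\PP^2$ is a curve invariant under the coordinate-swap involution, of bidegree and genus matching $\widetilde C$ with its sheet-exchange involution $\iota$; the quotient is then the sought curve $C$ together with its point of order two $\eta$, and the line bundle $L$ is recovered as the pullback of $\Oo_{\PP^2}(1)$ from the first factor.

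The key steps, in order, are as follows. First, fix $g\le 7$ and compute the numerical invariants: the bidegree $(d_1,d_2)$ of the image of $\widetilde C$ in $\PP^2\times\PP^2$ under $(L,\iota^*L)$, using $\deg L = g-1$ (since $L\in\Nm_f^{-1}(K_C)$ has $\widetilde C$-degree $2g-2$ and the map to each $\PP^2$ has degree $g-1$... here one must be careful: $L$ and $\iota^*L$ each have degree $2g-2$ on $\widetilde C$, but the morphism factors through $C$ only on the symmetric locus, so the relevant bidegree on $\widetilde C$ is $(2g-2,2g-2)$, or on $C$ a curve of degree $2g-2$ in $\PP^5$ bisected). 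Second, identify the linear system on $\PP^2\times\PP^2$ cutting out such $\iota$-invariant curves and verify, by a parameter count, that it has dimension equal to $\dim\R_g^2 = 4g-4-3 = 4g-7$ after accounting for the automorphisms of $\PP^2\times\PP^2$ commuting with the swap (a copy of $\mathrm{PGL}_3$ acting diagonally, plus the swap itself) — so that the family of such invariant curves is a projective bundle, hence rational, over a rational base, and the fibre dimension plus base dimension matches. Third, invoke irreducibility of $\R_g^r$ (stated in the excerpt) to conclude that the dominant rational map from this rational family to $\R_g^2$ is birational onto a component, hence birational, giving unirationality; in fact if the parameter space is literally a rational variety and the map is dominant, unirationality of $\R_g^2$ follows directly. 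Fourth, check the range: the parameter count closes precisely for $g\le 7$ and fails for $g\ge 8$, which is exactly the stated cutoff and the reason $g=8$ is treated separately afterwards.

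The main obstacle is the parameter count in step two together with the verification that the generic curve produced is actually \emph{general} in $\R_g^2$ — i.e. that the map from the rational parameter space is genuinely dominant, not merely of positive-dimensional image. This requires showing that a general $\iota$-invariant curve of the prescribed type in $\PP^2\times\PP^2$ (i) is smooth of the correct genus, (ii) has a fixed-point-free involution so that the quotient $C$ is smooth of genus $g$ with $\eta$ a genuine nonzero two-torsion point (equivalently, the double cover $\widetilde C\to C$ is connected and étale — one must rule out the curve being disconnected or the involution having fixed points), and (iii) that $L=\mathrm{pr}_1^*\Oo_{\PP^2}(1)$ satisfies $\Nm_f(L)=K_C$ and $h^0(\widetilde C,L)=3$ with $\mu_0^+(L)$ injective, so that we land in the open locus of $\R_g^2$ where the diagram is valid. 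The cleanest route to (iii) is a single explicit example — exhibit one pair $\bigl([C,\eta],L\bigr)$, e.g. by direct construction on a specific K3 or Nikulin surface section for small $g$, where all the conditions hold, and then semicontinuity propagates smoothness and the dimension count across the whole family. I would also double-check the edge genus $g=7$ against Theorem~\ref{nikulin}: there $\R_7^2$ should be compatible with the Nikulin-surface parametrization of $\R_7$, providing an independent sanity check, while for $g=4,5,6$ the count has more slack and the construction is correspondingly easier.
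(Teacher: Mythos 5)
First, a point of order: the survey does not actually prove this statement --- it is quoted from Verra's paper \cite{V2} --- so there is no in-text argument to measure yours against. That said, your overall strategy of reversing the displayed diagram --- parametrizing swap-invariant curves $\widetilde{\Gamma}\subset \PP^2\times\PP^2$ of bidegree $(2g-2,2g-2)$ and genus $2g-1$, recovering $C$ and $\eta$ from the connected \'etale quotient, and recovering $L$ as $\mathrm{pr}_1^*\Oo_{\PP^2}(1)$ --- is exactly the geometry the construction in the text invites, and your checklist of verifications (freeness of the involution, connectedness of the cover, deciding whether $\Nm_f(L)$ equals $K_C$ or $K_C\otimes\eta$, the parity and value of $h^0(\widetilde{C},L)$, injectivity of $\mu_0^+(L)$, and dominance via one explicit example plus semicontinuity) is the right one.

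The difficulty is that the central step is missing, and your phrase ``the linear system on $\PP^2\times\PP^2$ cutting out such $\iota$-invariant curves'' points exactly at the gap: these curves have codimension $3$ in the fourfold $\PP^2\times\PP^2$, they are not divisors, and no linear system cuts them out. What the proof actually requires is an explicit unirational (ideally rational) variety dominating the relevant component of the Hilbert scheme of such invariant curves --- for instance by exhibiting $\widetilde{\Gamma}$ as a divisor on a concretely described swap-invariant rational surface in $\PP^2\times\PP^2$, or its image on the cubic $D$ as a divisor in a residual surface, or as a determinantal or linked curve --- and producing such a presentation is the entire content of Verra's theorem. As written, step two is circular: parametrizing genus-$(2g-1)$ curves with a free involution in $\PP^2\times\PP^2$ is a priori no easier than parametrizing $\R_g^2$ itself. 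Relatedly, the claim that ``the parameter count closes precisely for $g\leq 7$'' is announced rather than carried out, and the actual reason the survey switches methods at $g=8$ is not a linear-system count but the fact that the residual surface $Q_1\cap Q_2\cap D$ becomes canonical, so that $C$ only moves in a pencil there. A last small point: dominance from a rational parameter space yields unirationality outright; the detour through ``birational onto a component, hence birational'' is neither needed nor correct (a dominant map from a rational variety need not be birational), though you note this yourself.
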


This of course gives a new proof of the unirationality of $\R_g$ when $g\leq 7$. We turn our attention to the case of $\R_8$ and ask when is the image $$C\stackrel{|W|}\longrightarrow \PP^5$$ contained in a $(2,2,3)$ complete intersection, that is, we require that $C$ be contained in two additional quadrics. The idea of showing uniruledness of a moduli space of curves by realizing its general point as a section of a \emph{canonical surface} is not new and has already been used  in \cite{BV} to prove that $\M_{15}$ is rationally connected.  If $S\subset \PP^r$ is a canonical surface and $C\subset S$ is a curve such that $h^1(C, \Oo_C(1))\geq 1$, then $\mbox{dim }|C|\geq 1$, in particular $C$ deforms in moduli, and through a general point of the moduli space there passes a rational curve.
\vskip 3pt

\ \ \ \ To estimate the number of moduli of Prym curves lying on a $(2, 2, 3)$ complete intersection in $\PP^5$, we consider the following morphism between two vector bundles over an open subset of $\R_g^2$:
$$\xymatrix{
  & \mathcal{E}(C, \eta, L)=\mathrm{Sym}^2(W) \ar[rd] \ar[rr]^{\mu} & & H^0(C, K_C^{\otimes 2})=\mathcal{F}(C, \eta, L) \ar[ld] & \\
      &  & \R_g^2 \ar[d]       \\
      & & \R_g
                 }$$
Both $\mathcal{E}$ and $\mathcal{F}$ are vector bundles over $\R_g^2$, with fibres over a point $[C, \eta, L]$ as in the diagram above. The vector bundle morphism $\mu: \mathcal{E}\rightarrow \mathcal{F}$ is given by multiplication of sections. Note that when $g=8$, both $\mathrm{Sym}^2(W)$ and $H^0(C, K_C^{\otimes 2})$  have dimension 21, and we expect the corank $2$ degeneracy locus of $\mu$ to be of codimension $4$ in $\R_g^2$, and hence map dominantly onto $\R_g$.

\vskip 3pt
After some rather substantial work, we can show that a general $[C,\eta] \in \R_8$ lies on a finite number of surfaces $C\subset S\subset \PP^5$, where $S = Q_1\cap Q_2 \cap D \subseteq \PP^5$. Singular points of S are the 16 nodes corresponding to the intersections of $Q_1\cap Q_2$ with the Veronese surface
 $V_4:=\mathrm{Sing}(D)$. Furthermore, $K_S = \Oo_S(1)$ and from the adjunction formula we find that $\Oo_C(C)=\Oo_C$, hence there is an exact sequence
 $$0\longrightarrow \Oo_S\longrightarrow \Oo_S(C) \longrightarrow \Oo_C \longrightarrow 0.$$ One finds that $\dim |\Oo_S(C)| = 1$, that is, $C$ moves in a pencil of curves on $S$. Since the torsion line bundle $\eta$ can be recovered from the projection $s$, we obtain in fact a pencil in $\R_8$, passing through a general point. One has the following result, full details of which will appear in the forthcoming \cite{FV3}:
 \begin{theorem}
 The moduli space $\R_8$ is uniruled.
\end{theorem}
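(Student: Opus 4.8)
The plan is to exhibit a general Prym curve $[C,\eta]\in\R_8$ as a section of a canonical surface $S\subset\PP^5$ which itself varies in a rational family, so that the pencil $|\Oo_S(C)|$ furnishes a rational curve through a general point of $\R_8$. The construction begins with the universal Prym--Brill--Noether space $\R_8^2$, which dominates $\R_8$ with fibres of dimension $g-4=4$, and a general point $([C,\eta],L)$ for which $L$ is base-point free with $h^0(\widetilde C,L)=3$ and the even Petri map $\mu_0^+(L)$ is injective. As in the diagram above, this produces a $6$-dimensional space $W=\mathrm{Sym}^2 H^0(\widetilde C,L)\subset H^0(C,K_C)$, i.e.\ a projection $C\to\PP^5$ of the canonical model landing on the symmetric determinantal cubic $D$. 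The first real step is to study the multiplication map $\mu:\mathcal E=\mathrm{Sym}^2(W)\to\mathcal F=H^0(C,K_C^{\otimes 2})$ of vector bundles over (an open subset of) $\R_8^2$, both of rank $21$ when $g=8$: one needs that the generic corank of $\mu$ drops to $\le 2$ on a locus $Z\subseteq\R_8^2$ of the expected codimension $4$. Because $\dim\R_8^2=4g-4-3=25$ and $\dim\R_8=21$, such a $Z$ would have dimension $21$ and the composite $Z\to\R_8$ would be dominant; this is the heart of the argument and the place where a concrete degeneration or an explicit example is needed to verify that the degeneracy locus does not have excess dimension.

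Once the expected dimension of $Z$ is under control, the next step is to identify the geometry of the surface cut out by the two extra quadrics. For $([C,\eta],L)\in Z$ the cokernel of $\mu$ being (at least) $2$-dimensional means $C$ lies on a pencil's worth --- at least a net minus $D$, hence two independent new quadrics $Q_1,Q_2$ --- of quadrics beyond those vanishing on $D$; set $S:=Q_1\cap Q_2\cap D\subset\PP^5$. One must then check: (a) $S$ is a surface (i.e.\ $Q_1,Q_2$ meet $D$ properly), irreducible, with only the $16$ nodes coming from $Q_1\cap Q_2\cap V_4$ where $V_4=\mathrm{Sing}(D)$ is the Veronese; (b) by adjunction on the nodal complete intersection of a $(2,2,3)$ in $\PP^5$ one has $K_S=\Oo_S(1)$, so $S$ (or its small resolution) is a canonical surface; and (c) $\Oo_C(C)=\Oo_C$ via adjunction on $S$, giving the exact sequence $0\to\Oo_S\to\Oo_S(C)\to\Oo_C\to 0$ from which $h^0(\Oo_S(C))=2$, so $C$ moves in a pencil on $S$.

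The concluding step is to promote the pencil on a fixed $S$ to a rational curve in $\R_8$ through a general point. Because the $2:1$ quasi-\'etale map $s:\PP^2\times\PP^2\to D$ is intrinsic to $D$ (its branch locus is $V_4$), the double cover $\widetilde C\to C$, and hence the torsion point $\eta$, is recovered functorially from the embedding $C\subset D$; therefore deforming $C$ inside the pencil $|\Oo_S(C)|$ deforms the pair $[C,\eta]$, not merely the curve $C$. The pencil is a $\PP^1$ mapping non-constantly to $\R_8$ (non-constancy follows since $\dim|\Oo_S(C)|=1$ forces $h^1(C,\Oo_C(1))\ge 1$, so $C$ genuinely varies in moduli, as recalled in the $\M_{15}$ discussion of \cite{BV}), and by the dominance established in the first step such a $\PP^1$ passes through a general point of $\R_8$. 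A uniruled variety being exactly one through whose general point passes a rational curve, this proves $\R_8$ is uniruled.

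The main obstacle is unquestionably the dimension count for the degeneracy locus $Z$ of $\mu:\mathcal E\to\mathcal F$: a priori a corank-$\ge 2$ locus of a map between rank-$21$ bundles has codimension $\le 4$, but one must rule out that it is \emph{everything} (forcing all Prym curves of genus $8$ to lie on such a surface, which is false or at least unproven) and, more delicately, show it is \emph{nonempty} of exactly codimension $4$. This is where ``rather substantial work'' is required --- typically by specializing $[C,\eta]$ to a carefully chosen boundary point or to a curve on a specific nodal $(2,2,3)$ complete intersection, computing $\mathrm{coker}(\mu)$ there, and invoking semicontinuity; the full verification is deferred to \cite{FV3}.
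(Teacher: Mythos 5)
Your proposal follows essentially the same route as the paper's own sketch: the corank-$2$ degeneracy locus of $\mu:\mathrm{Sym}^2(W)\to H^0(C,K_C^{\otimes 2})$ over $\R_8^2$ (expected codimension $4$, hence dominating the $21$-dimensional $\R_8$), the nodal $(2,2,3)$ surface $S=Q_1\cap Q_2\cap D$ with $K_S=\Oo_S(1)$ and $\Oo_C(C)=\Oo_C$, the pencil $|\Oo_S(C)|$, and the recovery of $\eta$ from the quasi-\'etale double cover $s$. You also correctly isolate the one genuinely hard point --- that the degeneracy locus is nonempty of exactly the expected codimension and maps dominantly to $\R_8$ --- which is precisely the ``rather substantial work'' the paper itself leaves to the forthcoming \cite{FV3}.
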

%\emph{Strategy of proof.} It is sufficient to show that the degeneracy locus
%$$Z: = \Bigl\{\bigr([C,\eta], L\bigr) \::\: \corank \{\ \mathrm{Sym}^2(W)\rightarrow H^0(K_C^{\otimes 2})\} \geq 2\Bigr\} \subseteq \R_8^2$$ is of %codimension 4 in $\R_8^2$ and maps dominantly onto $\R_8$.
%\hfill $\Box$

\section{The Kodaira dimension of moduli of Prym varieties}

The aim of this lecture is to show that $\R_g$ is a variety of general type for $g\geq 14$ and to convey, in an informal setting,  some of the ideas contained \cite{FL}, \cite{HM}, \cite{EH}. First we discuss a general program of showing that a moduli space is of general type. This strategy has been used by Harris and Mumford in the case of $\mm_g$, by Gritsenko, Hulek and Sankaran \cite{GHS} in the case of the moduli space of polarized $K3$ surfaces and by the author \cite{F3} in the case of the space $\overline{\mathcal{S}}_g^+$ classifying even theta-characteristics.
\vskip 3pt

\ \ Any attempt to compute the Kodaira dimension of the moduli spaces of Prym varieties must begin with the construction of a suitable compactification of $\R_g$. This compactification should satisfy a number of minimal requirements:
\vskip 3pt

$\bullet$ The covering $\R_g\rightarrow \M_g$ should extend to a \emph{finite branched} covering $\ovl{\R}_g\rightarrow \ovl{\M}_g$.

$\bullet$ Points in $\ovl{\R}_g$ ought to have modular meaning. Ideally, $\ovl{\R}_g$ should be the coarse moduli space associated to a Deligne-Mumford stack
of \emph{stable Prym curves} of genus $g$, that is, points in the boundary should correspond to mildly singular curves with some level structure. If this requirement is fulfilled, one can carry out intersection theory on $\ovl{\R}_g$ and the results have enumerative meaning in terms of curves and their associated Prym varieties.

$\bullet$ The singularities of $\ovl{\R}_g$ should be manageable, in particular we would like pluri-canonical forms defined on the locus of smooth points $\ovl{\R}_{g, \mathrm{reg}}$ to extend to a resolution of singularities of $\ovl{\R}_g$. This implies that the Kodaira dimension of $\ovl{\R}_{g}$, defined as the Kodaira dimension of a non-singular model, coincides with the Kodaira-Iitaka dimension of the canonical divisor $K_{\ovl{\R}_g}$, which is computed at the level of $\ovl{\R}_g$. In practice, this last requirement forces $\ovl{\R}_g$ to have finite quotient singularities.
\vskip 4pt

\ \ \ \ In what follows we describe a satisfactory solution to this list of requirements. We fix a genus $g \geq 2$ and a level $l\geq 2$. We consider the
following generalization of the level $l$ modular curve
$$\R_{g,l} = \Bigl\{[C,\eta] \::\: [C] \in \M_g,\  \eta \in \mathrm{Pic}^0(C)-\{\Oo_C\} \text{ is a point of order } l\Bigr\}.$$
Obviously $\R_{g,2}=\R_g$. There is a forgetful map $\R_{g, l} \xra{\pi} \M_g$ of degree $l^{2g} -1$. The moduli space of twisted stable maps
$\ovl{\R}_{g, l}:=\ovl{\M}_g(\mathcal{B} \Z_l)$ viewed as a compactification of of $\R_{g, l}$ can be fitted into the following  commutative diagram, see \cite{ACV}:
$$\xymatrix{
  & \R_{g, l} \ar[d]_{\pi} \ar@{^{(}->}@<-0.5ex>[r] & \ovl{\R}_{g,l} \ar[d]_{\pi}& \\
  & \M_g \ar@{^{(}->}@<-0.5ex>[r] & \ovl{\M}_g & \\
}$$
By analogy with the much studied case of elliptic curves, one may regard $\ovl{\R}_{g, l}$  as a higher genus generalization of the modular curve $X_1(N)$.
For simplicity we shall explain the construction of $\R_{g, l}$ only in the case $l=2$, and refer to \cite{ACV}, \cite{CCC} for details for the case $l\geq 3$.
\begin{definition}
If  $X$ is a semi-stable curve, a component $E\subseteq X$ is called \emph{exceptional} if $E\simeq \PP^1$ and $E\cap \ovl{(X\setminus E)} = 2$,
that is, it meets the other components in exactly two points. The curve $X$ is called \emph{quasi-stable} if any two exceptional components of $X$ are disjoint.
\end{definition}
If $X$ is a quasi-stable curve, the stable model $\mathrm{st}(X)$ of $X$ is obtained by contracting all exceptional components of $X$.
\begin{definition}
The moduli space of stable Prym curves $\ovl{\R}_{g}$ of genus $g$ parametrizes triples $[X, \eta, \beta]$ such that:
\begin{itemize}
\item $X$ is a quasi-stable curve with $p_a(X) = g$.
\item $\eta \in \Pic^0(X)$, that is, $\eta$ is a locally free sheaf on $X$ of \emph{total} degree $0$.
\item $\eta_E \simeq \Oo_E(1)$ for all exceptional components $E\subseteq X$.
\item $\beta:\eta^{\otimes 2} \ra \Oo_X$ is a sheaf homomorphism which is an isomorphism along non-exceptional components;
%\item For $l\ge 3,$ if $E\cap \ovl{X\setminus E} = \{p,q\}$ then $ord_p(\beta) + ord_q(\beta) = l.$
\end{itemize}
\end{definition}
Next we define a stack/functor of stable Prym curves, whose associated coarse moduli space is precisely $\ovl{\R}_g$:
\begin{definition}
A \emph{family of Prym curves} over a base scheme $S$ consists of a
triple $(\mathcal{X}\stackrel{f}\rightarrow S, \eta, \beta)$, where
$f:\mathcal{X}\rightarrow S$ is a flat family of quasi-stable
curves, $\eta\in \mathrm{Pic}(\mathcal{X})$ is a line bundle and
$\beta:\eta^{\otimes 2}\rightarrow \Oo_{\mathcal{X}}$ is a sheaf
homomorphism, such that for every point $s\in S$ the restriction
$(X_s, \eta_{X_s}, \beta_{X_s}:\eta_{X_s}^{\otimes 2}\rightarrow
\Oo_{X_s})$ is a stable Prym curve of genus $g$.
\end{definition}
\begin{remark}
Note that by replacing in this definition the structure sheaf $\Oo_X$ by the dualizing sheaf $\omega_X$, we obtain the moduli of stable spin curves $\ovl{\SSS}_g$. Different compactifications of
the space $\R_{g,l}$ were studied  for $l\ge 3$ by Caporaso-Casagrande-Cornalba \cite{CCC}, Jarvis \cite{J} and Abramovich-Corti-Vistoli \cite{ACV}.
\end{remark}

There exists a forgetful morphism of stacks $\pi: \ovl{\R}_{g} \ra \ovl{\M}_g$ which at the level of sets is given by  $\pi([X,\eta,\beta]) = [\st(X)]$. Even though the morphism $\R_g\rightarrow \M_g$ is \'etale (at the level of stacks), the compactification $\ovl{\R}_g\rightarrow \ovl{\M}_g$  is ramified along the boundary. This accounts for better positivity properties of the canonical bundle $K_{\ovl{R}_g}$. As $g$ increases, $\ovl{\R}_g$ is expected to become sooner of general type than $\ovl{\M}_g$.
\vskip 3pt

\ \ \ \ As usual we denote by $\Delta_0 \subseteq \ovl{\M}_g$  the closure of the locus of irreducible one-nodal curves and for $1\leq i\leq [g/2]$ we denote by $\Delta_i\subset \ovl{\M}_g$ the boundary divisor whose general point corresponds to the union of two curves of genus $i$ and $g-i$ respectively, meeting transversally at a single point.

\vskip 4pt
\begin{example}\label{exad}
Let us take a general point  $[C_{xy}]\in\Delta_0$, corresponding to a  normalization map $\nu:C\xra{} C_{xy}$, where $C$ is a curve of genus $g-1$ and $x, y\in C$ are distinct points. We aim to describe all points $[X,\eta,\beta] \in \pi^{-1}([C_{xy}])$. Depending on whether $X$ contains an exceptional component or not, one distinguishes two cases:

If $X = C_{xy}$, there is an exact sequence
$$1\ra \Z_2 \ra \Pic^0(C_{xy})_2 \xra{\nu^*} \Pic^0(C)_2 \ra 1.$$ Setting $\eta_C = \nu^*(\eta)\in \mathrm{Pic}^0(C)$, there are two subcases to be distinguished:
\vskip 3pt
(I) If $\eta_C\neq \Oo_C$,  there is a $\Z_2$-ambiguity (coming from the previous sequence) in identifying the fibres $\eta_C(x)$ and $\eta_C(y)$, that is, there exist two possibilities of lifting $\eta_C$ to a line bundle on $X$.  Such an identification, together with the choice of line bundle $\eta_C\in \mathrm{Pic}^0(C)_2$ uniquely determine a line bundle $\eta$ on $C_{xy}$ which is a square root of the trivial bundle. We denote by $\Delta_0^{'} \subset \ovl{\R}_{g,2}$ the divisor consisting of such stable Prym curves together with all their degenerations.
\vskip 3pt

(II) If $\eta_C = \Oc$, then  there is exactly one way of identifying $\eta_C(x)$ and $\eta_C(y)$ such that $\eta\neq\mathcal{O}_X.$ The closure of this locus is a divisor denoted $\Delta_0^{''} \subset \ovl{\R}_{g,2}$. Points in
$\Delta_0^{''}$ are sometimes called \emph{Wirtinger double covers} \cite{Wi}, since they were used in \cite{Wi} to prove that Jacobians of genus $g-1$ are limits of Prym varieties of genus $g$.
\vskip 3pt

(III) If $X = C \cup_{\{x,y\}} E$, where $E=\PP^1$, then $\eta_E = \Oo_E(1)$ and $\eta_C \in \sqrt{\Oc(-x-y)}$. In this case there is no ambiguity in identifying the fibres and  the corresponding locus is the ramification divisor $\Delta_0^{\mathrm{ram}}$ of the map $ \pi:\ovl{\R}_g \ra \ovl{\M}_g$.
\end{example}
\vskip 3pt

 Keeping the notation above, if $\delta_0 = [\Delta_0]\in \mathrm{Pic}(\ovl{\M}_g)$ and $\delta_0':=[\Delta_0'], \delta_0^{''}:=[\Delta_0^{''}], \delta_0^{\mathrm{ram}}:=[\Delta_{0}^{\mathrm{ram}}]\in \mathrm{Pic}(\ovl{\R}_g)$, we obtain  the following relation:
$$\pi^*(\delta_0) = \delta_0^{'} + \delta_0^{''} + 2\delta_0^{\mathrm{ram}}.$$

\ \ \ All three cases described in Example \ref{exad} correspond to certain types of admissible double covers in the sense of \cite{B2}. These coverings are represented schematically as follows:

\begin{figure}[!h]
\begin{center}
\hspace{2cm}
\includegraphics[width=8cm, height=10cm]{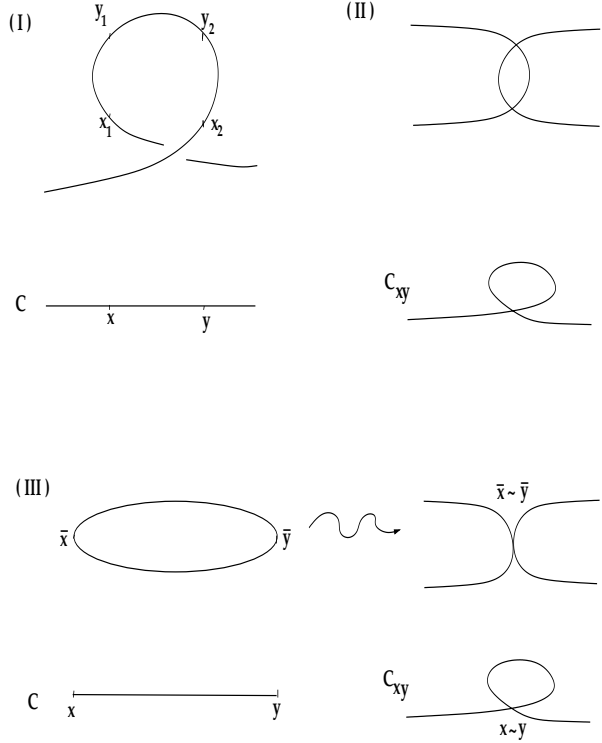}
\caption{admissible double covers}
\end{center}
\end{figure}

\begin{example} (\emph{Curves of compact type}) Let us consider a union of two smooth curves $C$ and
$D$ of genus $i$ and $g-i$ respectively meeting transversally at a point. We describe the fibre $\pi^{-1}([C\cup D])$. If $[X, \eta, \beta]\in \ovl{\R}_{g}$ is a stable Prym curve having as underlying model a curve $X$ with $\mathrm{st}(X)=C\cup D$, first we observe that
that $X=C\cup D$ (that is, $X$ has no exceptional components).
The line bundle $\eta$ on $X$  is determined by the choice of two line
bundles $\eta_C\in \mathrm{Pic}^0(C)$ and $\eta_D\in
\mathrm{Pic}^0(D)$ satisfying $\eta_C^{\otimes 2}=\Oo_C$ and
$\eta_D^{\otimes 2}=\Oo_D$ respectively. This shows that for $1\leq
i\leq [g/2]$ the pull-back under $\pi$ of the boundary divisor
$\Delta_i\subset \ovl{\M}_g$ splits into three irreducible components
$$\pi^*(\Delta_i)=\Delta_i+\Delta_{g-i}+\Delta_{i:g-i},$$ where the
generic point of $\Delta_i\subset \ovl{\R}_g$ is of the form $[C\cup D,
\eta_C\neq \Oo_C, \eta_D=\Oo_D]$, the generic point of
$\Delta_{g-i}$ is of the form $[C\cup D, \eta_C=\Oo_C, \eta_D\neq
\Oo_D])$, and finally $\Delta_{i: g-i}$ is the closure of the locus
of points $[C\cup D, \eta_C\neq \Oo_C, \eta_D\neq \Oo_D]$.
\end{example}

\ \ \ \ The canonical class $K_{\ovl{\R}_g}$ can be computed using the Grothendieck-Riemann-Roch formula for the universal curve over $\ovl{\R}_g$ in the spirit of \cite{HM}, or using the Hurwitz formula for the branched covering $\pi$.

\begin{theorem}
One has the following formula in $\mathrm{Pic}(\ovl{\R}_g)$:
$$K_{\ovl{\R}_g}=13\lambda-2(\delta_0^{'}+\delta_0^{''})-3\delta_0^{\mathrm{ram}}-2\sum_{i=1}^{[g/2]}
(\delta_i+\delta_{g-i}+\delta_{i:
g-i})-(\delta_1+\delta_{g-1}+\delta_{1: g-1}).$$
\end{theorem}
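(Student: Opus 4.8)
The plan is to compute $K_{\ovl{\R}_g}$ by comparing it to the pullback of $K_{\ovl{\M}_g}$ under the finite branched covering $\pi:\ovl{\R}_g\to\ovl{\M}_g$, using the Hurwitz (ramification) formula $K_{\ovl{\R}_g}=\pi^*K_{\ovl{\M}_g}+\mathrm{Ram}(\pi)$. First I would recall the Harris--Mumford formula on the moduli stack of curves, $K_{\ovl{\M}_g}=13\lambda-2\delta_0-3\delta_1-2\sum_{i\geq 2}\delta_i$, where the extra term along $\delta_1$ comes from the automorphism (the elliptic tail involution) rather than from genuine boundary geometry; since $\pi$ is \'etale over the interior and the same elliptic-tail phenomenon persists on $\ovl{\R}_g$ over each of $\Delta_1,\Delta_{g-1},\Delta_{1:g-1}$, the $-(\delta_1+\delta_{g-1}+\delta_{1:g-1})$ correction term appears verbatim in the answer. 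So the real content is pulling back $13\lambda-2\delta_0-2\sum_{i\geq 2}\delta_i$ and adding the ramification divisor of $\pi$.

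Next I would identify the ramification divisor. Over the interior $\pi$ is \'etale, so $\mathrm{Ram}(\pi)$ is supported on the boundary. From Example~\ref{exad}, over $\Delta_0$ the fibre decomposes with $\pi^*\delta_0=\delta_0'+\delta_0''+2\delta_0^{\mathrm{ram}}$, the factor $2$ exhibiting $\Delta_0^{\mathrm{ram}}$ (the locus of quasi-stable curves with an exceptional $\P^1$ over the node) as simple ramification; thus $\mathrm{Ram}(\pi)=\delta_0^{\mathrm{ram}}$ is its only nonzero part (over each $\Delta_i$, $i\geq 1$, the map is unramified as the fibre splits into three \emph{reduced} components $\Delta_i+\Delta_{g-i}+\Delta_{i:g-i}$). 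Substituting $\pi^*\delta_0=\delta_0'+\delta_0''+2\delta_0^{\mathrm{ram}}$ and $\pi^*\delta_i=\delta_i+\delta_{g-i}+\delta_{i:g-i}$ into $\pi^*(13\lambda-2\delta_0-2\sum_{i\geq 2}\delta_i)$, then adding $\mathrm{Ram}(\pi)=\delta_0^{\mathrm{ram}}$, gives
$$13\lambda-2\delta_0'-2\delta_0''-4\delta_0^{\mathrm{ram}}+\delta_0^{\mathrm{ram}}-2\sum_{i=2}^{[g/2]}(\delta_i+\delta_{g-i}+\delta_{i:g-i}),$$
i.e. the coefficient of $\delta_0^{\mathrm{ram}}$ becomes $-3$, after which one separates out the $i=1$ contribution to display the $-(\delta_1+\delta_{g-1}+\delta_{1:g-1})$ summand explicitly, yielding the stated formula. (Here $\lambda$ denotes $\pi^*\lambda$; since $\pi$ contracts no divisor and is an isomorphism of Picard groups away from the boundary splitting, no $\lambda$-correction is needed.)

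The main obstacle, and the point requiring genuine care, is the bookkeeping near the boundary at the level of \emph{stacks}: one must verify that $\pi$ is simply branched (ramification index exactly $2$) along $\Delta_0^{\mathrm{ram}}$ and unramified along the lifts of $\Delta_i$, and that there are no further ramification contributions hidden in the automorphisms of stable Prym curves versus those of the underlying stable curves. This is where the analysis of the local structure of $\ovl{\R}_g=\ovl{\M}_g(\mathcal{B}\Z_2)$ as a twisted-stable-maps space enters: near a point of $\Delta_0^{\mathrm{ram}}$ the stack $\ovl{\R}_g$ has a $\Z_2$-orbifold structure transverse to the exceptional component, producing precisely the factor $2$ in $\pi^*\delta_0$, while along $\Delta_i$ the three preimages carry the same automorphism groups as $\Delta_i\subset\ovl{\M}_g$ (no new stack structure), so no ramification is contributed there. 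An alternative, which I would carry out as a cross-check, is to derive the formula directly by Grothendieck--Riemann--Roch applied to the universal quasi-stable curve $\mathcal{C}\to\ovl{\R}_g$ in the style of \cite{HM}: expressing $\lambda$, $\kappa_1$ and the boundary classes on $\ovl{\R}_g$ and assembling $K_{\ovl{\R}_g}=12\lambda-\delta+\psi$-type identities adapted to the Prym setting; the two computations must agree, which pins down all coefficients unambiguously.
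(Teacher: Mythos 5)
Your proposal is correct and follows essentially the same route as the paper: the Harris--Mumford formula for $K_{\ovl{\M}_g}$, the Hurwitz formula $K_{\ovl{\R}_g}=\pi^*(K_{\ovl{\M}_g})+\delta_0^{\mathrm{ram}}$ for the covering simply branched along $\Delta_0^{\mathrm{ram}}$, and the boundary pullback identities $\pi^*\delta_0=\delta_0'+\delta_0''+2\delta_0^{\mathrm{ram}}$, $\pi^*\delta_i=\delta_i+\delta_{g-i}+\delta_{i:g-i}$. The paper's proof is a terse version of exactly this argument (it also mentions your Grothendieck--Riemann--Roch cross-check as an alternative), so no further comparison is needed.
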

\begin{proof} We use the Harris-Mumford formula \cite{HM}
$$K_{\ovl{\M}_g}\equiv 13\lambda-2\delta_0-3\delta_1-2\delta_2-\cdots
-2\delta_{[g/2]},$$ together with the Hurwitz formula
for the ramified covering $\pi:\ovl{\R}_g\rightarrow \ovl{\M}_g$, which we recall, is simply branched along $\Delta_0^{\mathrm{ram}}$.  We find that
$K_{\ovl{\R}_g}=\pi^*(K_{\ovl{\M}_g})+\delta_0^{\mathrm{ram}}$.
\end{proof}

Without worrying for a moment about the singularities that the moduli space might have, to decide whether $\ovl{\R}_g$ has non-negative Kodaira dimension is equivalent to knowing whether there exist any Siegel modular forms on $\ovl{\R}_g$ of weight $13$ and vanishing with order $2$ or $3$ along infinity.

\section{The singularities of $\ovl{\R}_g$}

The Kodaira dimension $\kappa(X)$ of a complex normal projective variety $X$ is defined as $\kappa(X):=\kappa(X')$, where $\epsilon:X'\rightarrow X$ denotes an arbitrary resolution of singularities. In general, the fact that the canonical sheaf $\Oo_X(K_X)$ is big, does not imply that $X$ is of general type. One only has an inequality $\kappa(X)\leq \kappa(X, K_X)$, relating the Kodaira dimension of $X$ to the Kodaira-Iitaka dimension of its canonical linear series. To give a very simple minded example where equality fails to hold, let $C\subset \PP^2$ be a plane quartic curve with three nodes. Then by the adjunction formula, $K_C = \Oo_C(1)$, and this divisor is of course big. But $\kappa(C) = -\infty$ because the normalization of $C$ is a rational curve. The reason is that the singularities of $C$ impose too many adjunction conditions. In order to determine the Kodaira dimension of ${\ovl{\R}_{g}}$ by working directly with its canonical bundle $K_{\ovl{\R}_g}$ (and this is certainly what one wants, for a desingularization of $\ovl{\R}_g$ would a priori have no modular interpretation), one must have control over the singularities of the coarse moduli space.
\vskip 3pt

\ \ \ \ Just like in the case of $\mm_g$, the local structure of $\ovl{\R}_g$ is governed by Kodaira-Spencer deformation theory. Let $X$ be a quasi-stable curve of genus $g$, and denote by $\omega_X$ (respectively $\Omega_X$) the dualizing sheaf of $X$ (respectively the sheaf of K\"ahler differentials on $X$).
Note that $\omega_X$ is locally free, whereas $\Omega_X$ fails to be locally free at the nodes of $X$. There is a \emph{residue map}
$$\mathrm{res}:\omega_X\rightarrow \bigoplus_{p\in \mathrm{Sing}(X)} \mathbb C_p, \ \ \ \omega\mapsto \bigl(\mathrm{Res}_p(\omega)\bigr)_{p\in \mathrm{Sing}(X)}, $$
which is well-defined because the residues of a $1$-form $\omega\in H^0(X, \omega_X)$ along the two branches of $X$ corresponding to a node $p\in \mathrm{Sing}(X)$ coincide. There exists an exact sequence
$$\Omega_X\longrightarrow \omega_X\stackrel{\mathrm{res}}\longrightarrow \bigoplus_{p\in \mathrm{Sing}(X)} \mathbb C_p\longrightarrow 0.$$
We also recall that an \'etale neighbourhood of  $[C]\in \mm_g$ is given by a neighbourhood of the origin in the quotient
$$T_{[C]}(\mm_g)=\mbox{Ext}^1_C (\Omega_C, \Oo_C)/\mathrm{Aut}(C)=\bigl(H^0(C, \omega_C\otimes \Omega_C)\bigr)^{\vee}/\mathrm{Aut}(C).$$
One has a similar local description of $\ovl{\R}_g$. First of all, note that the versal deformation space of a Prym curve $[X, \eta, \beta]$ coincides with that of its stable model. The concept of an automorphism of a Prym curve has to be defined with some care:

\begin{definition}
An \emph{automorphism} of a Prym curve $[X, \eta, \beta]\in \ovl{\R}_g$  is an automorphism
$\sigma\in \mathrm{Aut}(X)$ such that there exists an isomorphism of
sheaves $\gamma:\sigma^*\eta\rightarrow \eta$ making the
following diagram commutative.
\begin{eqnarray*}
\xymatrix @!0 @R=1.3cm @C=2cm {
        (\sigma^*\eta)^{\otimes  2}  \ar[r]^-{  \gamma^{\otimes 2}}   \ar[d]_{\sigma^* \beta} &
        \eta^{\otimes 2} \ar[d]^{\beta}\\
        \sigma^* \mathcal{O}_X  \ar[r]^{\simeq} & \mathcal{O}_X
        }
\end{eqnarray*}
\end{definition}
If $C:=\mbox{st}(X)$ denotes the stable model of $X$ obtained by contracting all exceptional components of $X$, then there is a group
homomorphism $\mbox{Aut}(X, \eta, \beta) \rightarrow \mbox{Aut}(C)$ given by
$\sigma \mapsto \sigma_{C}$.  We call a node $p\in \mathrm{Sing}(C)$ \emph{exceptional} if it corresponds to an exceptional component that gets contracted under the map $X\rightarrow C$.
\vskip 3pt

\ \ \ \ We fix a Prym curve $[X,\eta,\beta] \in \ovl{\R}_{g}$.
An \'etale neighbourhood of $[X, \eta, \beta]$ is
isomorphic to the quotient of the versal deformation space
$\mathbb C_{\tau}^{3g-3}$ of $[X, \eta, \beta]$ modulo the action of the automorphism group
$\mathrm{Aut}(X, \eta, \beta)$. If $\mathbb C^{3g-3}_t=\mbox{Ext}^1(\Omega_C^1, \Oo_C)$ denotes
the versal deformation space of $C$, then the map
$\mathbb C_\tau^{3g-3} \rightarrow \mathbb C_t^{3g-3}$ is given by $t_i=\tau_i^2$, if
$(t_i=0) \subset \mathbb C_t^{3g-3}$ is the locus where an exceptional node
$p_i$ persists and $t_i=\tau_i$ otherwise. The
 morphism $\pi:\ovl{\R}_g\rightarrow\mm_g$ is given locally by the map
$$\mathbb C_{\tau}^{3g-3}/\mathrm{Aut}(X, \eta, \beta)\rightarrow \mathbb C_t^{3g-3}/\mathrm{Aut}(C).$$

This discussion illustrates the fact that $\ovl{\R}_g$ is a space with finite quotient singularities. It is a basic question to describe canonical  finite quotient singularities and the answer is provided by the \emph{Reid-Shepherd-Barron-Tai criterion} \cite{Re}.

\begin{definition}\label{can}
A $\mathbb Q$-factorial normal projective variety $X$ is said to have \emph{canonical singularities} if for any sufficiently divisible integer $r\geq 1$ and for a resolution of singularities $\epsilon:X'\rightarrow X$, one has that $\epsilon_*(\omega_{X'}^{\otimes r})=\Oo_X(rK_X)$. If this property is satisfied in a neighbourhood of a point $p\in X$, one says that $X$ has a canonical singularity at $p$.
\end{definition}

From the definition it follows that a section $s$ of $\Oo_X(rK_X)$ regular around $p\in X$ extends regularly to a neighbourhood of $\epsilon^{-1}(p)$. Canonical singularities appear in the Minimal Model Program as the singularities of canonical models of varieties of general type.
\vskip 3pt

\ \ \ \ Assume now $V:=\mathbb C^m$ and let $G\subset GL(V)$ be a finite group. We fix an element $g\in G$ with $\mbox{ord}(g)=n$.
The matrix corresponding to the action of $g$ is conjugate to a diagonal matrix $\mathrm{diag}(\zeta^{a_1},\ldots,\zeta^{a_{m}})$,  where $\zeta$ is an $n$-th root of unity and $0\leq a_i<n$ for $i=1, \ldots, m$. One  defines the \emph{age} of $g$ as the following sum
\vskip 4pt
\begin{center}
\fbox{$\mathrm{age}(g) := \frac{a_1}{n}+\cdots + \frac{a_{m}}{n}.$}
\end{center}
\begin{definition}
The element $g\in G$ is said to be \emph{junior} if $\mathrm{age}(g) < 1$ and \emph{senior} otherwise.
\end{definition}

We have the following characterization \cite{Re} of finite quotient canonical singularities:
\begin{theorem}\label{rsbt}
Let $G\subset GL(V)$ be a finite subgroup acting without quasi-reflections. Then the quotient $V/G$ has canonical singularities if and only if each non-trivial element $g\in G$ is senior.
\end{theorem}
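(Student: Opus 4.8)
The plan is to reduce the global statement about $V/G$ to a local computation near the image of a fixed point, and then to compare the discrepancies of a resolution with the ``weights'' recorded by the age function. First I would observe that the question is local: since $G$ is finite, $V/G$ is covered by analytic neighbourhoods of the form $U/G_x$, where $x\in V$ and $G_x$ is the stabilizer. So it suffices to treat the case where $G$ fixes the origin $0\in V$, and to decide whether $V/G$ has a canonical singularity at the image of $0$. Because $G$ acts without quasi-reflections, the quotient map $V\to V/G$ is étale in codimension one, so the ramification divisor is empty and the Hurwitz-type comparison of canonical sheaves is clean: a pluricanonical form on the smooth locus of $V/G$ pulls back to a $G$-invariant pluricanonical form on $V\setminus\{0\}$, and conversely every $G$-invariant such form descends.

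Next I would set up the actual discrepancy computation. Fix a resolution $\epsilon:X'\to V/G$; by Definition~\ref{can} we must check whether $\epsilon_*(\omega_{X'}^{\otimes r})=\Oo_{V/G}(rK_{V/G})$ for $r$ sufficiently divisible, equivalently whether every pluricanonical form regular on $(V/G)_{\mathrm{reg}}$ extends across $\epsilon^{-1}(0)$. The standard device is to diagonalize: for a fixed element $g\in G$ of order $n$, write its action as $\mathrm{diag}(\zeta^{a_1},\dots,\zeta^{a_m})$, and consider the associated ``weighted blow-up'' or toric-type partial resolution of $V/\langle g\rangle$. On a suitable chart the exceptional divisor $E$ appears with discrepancy $\mathrm{age}(g)-1$: indeed $dz_1\wedge\cdots\wedge dz_m$ is $\langle g\rangle$-semi-invariant of weight $\sum a_i$, and after passing to invariants one picks up a factor whose vanishing order along $E$ is precisely $\frac{a_1}{n}+\cdots+\frac{a_m}{n}-1$. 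Thus $g$ junior (age $<1$) produces a divisor of negative discrepancy, obstructing the extension of canonical forms, and hence $V/G$ fails to be canonical; conversely if every non-trivial $g$ is senior one shows all discrepancies over $0$ are $\geq 0$.

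For the ``if'' direction one argues directly with invariant forms rather than with a single resolution. A pluricanonical form on $(V/G)_{\mathrm{reg}}$ corresponds to a $G$-invariant section $\omega$ of $\omega_{V\setminus\{0\}}^{\otimes r}=\Oo(rK_V)$ over $V\setminus\{0\}$; by Hartogs it extends to all of $V$. One must check it extends across the exceptional locus of an arbitrary resolution $\epsilon:X'\to V/G$, and by a valuative criterion it is enough to bound the order of vanishing along every exceptional divisor $E\subset X'$, i.e.\ along every discrete valuation $v$ centred at $0$. Pulling back to a resolution $\tilde X$ of the normalization of the fibre product $X'\times_{V/G} V$ and writing the corresponding valuation on $V$, the seniority of the relevant monomial-type element forces the discrepancy to be non-negative; combined with $v(\omega)\geq 0$ this gives the required extension. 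The main obstacle, and the point requiring genuine care, is precisely this last step: handling resolutions that are \emph{not} monomial with respect to any single $g\in G$. The resolution of the ``only if'' direction is comparatively easy because one gets to choose the (weighted) blow-up adapted to a junior element; the ``if'' direction must rule out \emph{all} resolutions, so the argument must be phrased valuation-theoretically, reducing an arbitrary divisorial valuation over $V/G$ to a monomial valuation over $V$ after a further birational modification, and only then invoking the age inequality. This is the technical heart of Reid's and Tai's original arguments, and it is what I would expect to spend most of the effort on.
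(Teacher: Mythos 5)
The paper does not actually prove Theorem~\ref{rsbt}: it is quoted from Reid \cite{Re} (see also Tai \cite{ta1982}), so there is no in-text argument to compare yours against. Your plan is the standard Reid--Tai argument, and its outline is correct: the localization to a fixed point, the observation that the absence of quasi-reflections makes $V\to V/G$ \'etale in codimension one so that $r$-pluricanonical forms on $(V/G)_{\mathrm{reg}}$ correspond to $G$-invariant ones on $V$ (and $K_V=\pi^*K_{V/G}$, so discrepancies can be compared without a ramification correction in codimension one), the monomial valuation attached to a diagonalized junior $g$ having log discrepancy $\mathrm{age}(g)$ over $V/G$, and the reduction of an arbitrary exceptional divisor to the local monodromy element at its generic point in the ``if'' direction. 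Two caveats. First, in the ``only if'' direction you should justify why the weighted valuation attached to $g$ descends to a divisorial valuation over $V/G$ (not merely over $V/\langle g\rangle$) with the asserted discrepancy; this uses that the ramification index of that valuation in the extension $k(V)/k(V/G)$ equals $\mathrm{ord}(g)$, so that $a(E;V/G)+1=\frac{1}{n}\sum a_i=\mathrm{age}(g)$. Second, the ``if'' direction as written is a statement of intent rather than a proof: the step you call the technical heart is exactly Tai's lemma, namely that at a general point of any exceptional divisor the induced cover is cyclic with monodromy some $g\in G$, and expanding the $G$-invariant form $f\,(dz_1\wedge\cdots\wedge dz_m)^{\otimes r}$ in eigencoordinates of $g$ forces every monomial of $f$ to compensate the weight $-r\sum a_i$ of $(dz)^{\otimes r}$, which combined with $\mathrm{age}(g^k)\geq 1$ for all $k$ yields non-negative vanishing order along the divisor. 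You have correctly identified where the work lies, but that power-series computation still has to be carried out for the plan to become a proof; as a blueprint it coincides with the proof in the cited references.
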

\begin{remark} If $g\in G$ acts as a quasi-reflection, then $\{v\in V: g\cdot v=v\}$ is a hyperplane and
$g\sim \mathrm{diag}(\zeta^{a_1}, 1, \ldots, 1)$, hence $\mathrm{age}(g)=\frac{a_1}{n}<1$, that is, each quasi-reflection is junior. On the other hand, obviously quasi-reflections do not lead to a singularities of $V/G$, which is the reason for their exclusion from the statement of the Reid-Shepherd-Barron criterion.
\end{remark}

\ \ \ \ How does one apply Theorem \ref{rsbt} to study the singularities of $\ovl{\R}_g$? Let us fix a point $[X, \eta, \beta]\in \ovl{\R}_g$ as well as the \'etale neighbourhood $\mathbb C_{\tau}^{3g-3}$ defined above. We denote by $H\subset \mathrm{Aut}(X, \eta, \beta)$ the subgroup generated by automorphism acting as quasi-reflections on $\mathbb C_{\tau}^{3g-3}$. The quotient map
$$\mathbb C_{\tau}^{3g-3}\rightarrow \mathbb C_{\tau}^{3g-3}/H:=\mathbb C_{v}^{3g-3}$$
is given by $v_i:=\tau_i^2$ if the coordinate $\tau_i$ corresponds to smoothing out an elliptic tail of $X$ and $v_i:=\tau_i$ otherwise. By definition, $\mathrm{Aut}(X, \eta, \beta)$ acts on $\mathbb C_{v}^{3g-3}$ without quasi-reflections, hence by applying Theorem \ref{rsbt}, the quotient $\ovl{\R}_g$ has a canonical singularity at $[X, \eta, \beta]$ if an only if each automorphism is senior. In that is the case, forms defined in a neighbourhood of $[X, \eta, \beta]$ extend locally to any resolution of singularities. Unfortunately, $\ovl{\R}_g$ \emph{does have} non-canonical singularities as the following simple example demonstrates:

\begin{example} Let us choose an elliptic curve $[C_1, p]\in \M_{1, 1}$ with $\mbox{Aut}(C_1, p)=\mathbb Z_6$, as well as an arbitrary pointed curve $[C_2, p]\in \M_{g-1, 1}$ together with a non-trivial point of order two $\eta_{2}\in \mathrm{Pic}^0(C_2)-\{\Oo_{C_2}\}$. We consider a stable Prym curve $$[X:=C_1\cup_p C_2, \eta]\in \ovl{\R}_g,$$ where $\eta_{C_1}=\Oo_{C_1}$ and $\eta_{C_2}=\eta_2$. We consider an automorphism
$\sigma \in \mathrm{Aut}(X, \eta, \beta)$, where $\sigma_{C_2}$ is trivial and $\sigma_{C_1}\in \mathrm{Aut}(C_1)$ generates $\mathrm{Aut}(C_1)$. In the versal deformation space $\mathbb C_{\tau}^{3g-3}$ there exists two coordinates $\tau_1$ and $\tau_2$ corresponding to directions which preserve the node $p\in X$ and deform the $j$-invariant respectively. One can find a $6$-th root of unity $\zeta_6$ such that the action of $\sigma$ on $\mathbb C_{\tau}^{3g-3}$ is given by:
$$\sigma\cdot \tau_1=\zeta_6\tau_1, \ \ \sigma\cdot \tau_2=\zeta_6^2 \tau_2 \  \mbox{ and } \sigma\cdot \tau_i=\tau_i, \ \mbox{ for } i=3, \ldots, 3g-3.$$
The quotient map $\mathbb C_{\tau}^{3g-3}\rightarrow \mathbb C_v^{3g-3}$ is given by the formulas:
$$v_1=\tau_1^2,\ \ v_2=\tau_2 \ \mbox{ and } \ \ v_i=\tau_i \mbox{ for } i=3, \ldots, 3g-3.$$
Therefore the action of $\sigma$ on $\mathbb C_v^{3g-3}$ can be summarized as follows:
$$\sigma\cdot v_1=\zeta_6^2 v_1,\ \sigma\cdot v_2=\zeta_6^2 v_2 \mbox{ and } \sigma\cdot v_i=v_i \mbox{ for } i=3, \ldots, 3g-3.$$
Therefore $\mbox{age}(\sigma)=\frac{2}{6}+\frac{2}{6}=\frac{2}{3}<1$, and this leads  to a non-canonical singularity.
\end{example}

\vskip 3pt
The good news is that, in some sense, this is the only source of examples of non-canonical singularities. By a detailed case by case analysis, one proves the following characterization \cite{FL} Theorem 6.7 of the locus of non-canonical singularities:

\begin{theorem} Set $g\geq 4$. A point $[X, \eta, \beta]\in \ovl{\R}_g$ is a non-canonical singularity if and only if $X$ possesses an elliptic subcurve $C_1\subset X$ with $|C_1\cap \overline{(X-C_1)}|=1$, such that the $j$-invariant of $C_1$ is equal to zero, and the restriction $\eta_{C_1}$ is trivial.
\end{theorem}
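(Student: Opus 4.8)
The plan is to test the Reid--Shepherd-Barron--Tai criterion (Theorem~\ref{rsbt}) on the \'etale chart $\mathbb{C}_v^{3g-3}/\mathrm{Aut}(X,\eta,\beta)$ described above: since $\mathrm{Aut}(X,\eta,\beta)$ acts on $\mathbb{C}_v^{3g-3}$ without quasi-reflections, the quotient has a non-canonical singularity at $[X,\eta,\beta]$ precisely when some non-trivial $\sigma\in\mathrm{Aut}(X,\eta,\beta)$ is \emph{junior}, that is, acts on $\mathbb{C}_v^{3g-3}$ with $\mathrm{age}(\sigma)<1$. Thus the whole statement becomes a question of lower bounds for ages of automorphisms of stable Prym curves.

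The implication $(\Leftarrow)$ is essentially the content of the Example preceding the statement. If $X$ carries an elliptic tail $C_1$ with $|C_1\cap\overline{X\setminus C_1}|=1$, $j(C_1)=0$ and $\eta_{C_1}\cong\Oo_{C_1}$, then $\mathrm{Aut}(C_1,p)\cong\mathbb{Z}_6$ and, because $\eta_{C_1}$ is trivial, a suitable generator $\rho$ of this group, extended by the identity on $\overline{X\setminus C_1}$, lifts to $\sigma\in\mathrm{Aut}(X,\eta,\beta)$; on $\mathbb{C}_v^{3g-3}$ this $\sigma$ fixes every coordinate except the node-smoothing coordinate $v_1$ of $C_1$ and the $j$-deformation coordinate $v_2$ of $C_1$, on both of which it acts by $\zeta_6^2$, so that $\mathrm{age}(\sigma)=\frac13+\frac13<1$. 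It is worth recording why triviality of $\eta_{C_1}$ cannot be omitted: the automorphisms of $(C_1,p)$ of order $6$ and $3$ permute the three non-trivial points of $\mathrm{Pic}^0(C_1)[2]$ cyclically, so if $\eta_{C_1}\neq\Oo_{C_1}$ the only power of $\rho$ that lifts to $\mathrm{Aut}(X,\eta,\beta)$ is the elliptic involution $\rho^3$, which is a quasi-reflection and has therefore already been divided out in passing from $\mathbb{C}_\tau^{3g-3}$ to $\mathbb{C}_v^{3g-3}$.

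For the substantive implication $(\Rightarrow)$, suppose $[X,\eta,\beta]$ is non-canonical and fix a junior $\sigma\in\mathrm{Aut}(X,\eta,\beta)$. Since each non-trivial eigenvalue of $\sigma$ on $\mathbb{C}_v^{3g-3}$ contributes at least $1/\mathrm{ord}(\sigma)$ to $\mathrm{age}(\sigma)$, the inequality $\mathrm{age}(\sigma)<1$ forces $\sigma$ to act non-trivially on very few of the $3g-3$ coordinates; in particular $\sigma$ must restrict to the identity on a large sub-curve of $X$, and the part of $C:=\mathrm{st}(X)$ on which $\sigma_C:=\mathrm{st}(\sigma)$ moves points is confined to a small sub-configuration. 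One then invokes the classification of such localized automorphisms of stable curves of genus $g\geq 4$, carried out for $\ovl{\M}_g$ in \cite{HM} and for $\ovl{\SSS}^+_g$ in \cite{F3} and adapted to the present setting: $(C,\sigma_C)$ is built from a short explicit list of local models, the delicate entries being elliptic tails equipped with an automorphism of order $3$, $4$ or $6$. For each entry of the list one enumerates the compatible square roots $\eta$ — recording precisely when $\eta$ forces $X$ to acquire an exceptional component, hence an extra squaring in $\mathbb{C}_v^{3g-3}$, and keeping track of the ramification of $\pi$ along $\Delta_0^{\mathrm{ram}}$ — together with the liftings of $\sigma_C$ to $\mathrm{Aut}(X,\eta,\beta)$, and then computes $\mathrm{age}(\sigma)$ directly. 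The verification yields $\mathrm{age}(\sigma)\geq 1$ in every case except the $j$-invariant-zero elliptic tail with trivial $\eta_{C_1}$ already exhibited; for instance the order-$4$ automorphism of an elliptic tail with $j=1728$ contributes $\frac12+\frac12=1$ and is merely senior, which accounts for the absence of that configuration from the statement.

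The hard part is this last case-by-case age computation. It is mechanical once the classification of $(C,\sigma_C)$ is in hand, but the bookkeeping is delicate because of three features absent from the analysis of $\ovl{\M}_g$: the freedom in choosing the restriction of $\eta$ to each component of $X$, the possible presence of exceptional components (a phenomenon specific to the Prym compactification) together with the associated $\tau\mapsto\tau^2$, which can halve an age contribution, and the ambiguity in lifting $\sigma_C$ from $\mathrm{Aut}(C)$ to $\mathrm{Aut}(X,\eta,\beta)$. The hypothesis $g\geq 4$ is used in the classification step, to ensure that $3g-3$ is large enough that the coordinates untouched by $\sigma$ contribute nothing to the age and that no small-genus coincidences interfere.
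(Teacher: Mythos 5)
Your proposal follows the same route as the paper (and its source \cite{FL}, Theorem 6.7): reduce to the Reid--Shepherd-Barron--Tai criterion on the chart $\mathbb{C}_v^{3g-3}/\mathrm{Aut}(X,\eta,\beta)$, obtain sufficiency from the age-$\frac{2}{3}$ computation for the $j=0$ elliptic tail with trivial $\eta_{C_1}$ (your observation that the order-$3$ and order-$6$ automorphisms permute the three nontrivial $2$-torsion points cyclically, so that for $\eta_{C_1}\neq\Oo_{C_1}$ only the quasi-reflection $\rho^3$ survives and gets absorbed into $\mathbb{C}_v^{3g-3}$, is correct and is exactly why the triviality hypothesis appears), and obtain necessity from the Harris--Mumford-style classification of junior automorphisms followed by a case-by-case age bound that tracks the choices of $\eta$, the exceptional components, and the lifting ambiguity. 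Like the survey itself, you state rather than carry out that case analysis, so the necessity direction remains an outline deferred to \cite{FL}; but it is the right outline, the sample ages you quote (e.g.\ $\frac12+\frac12=1$ for the $j=1728$ tail after the quasi-reflection quotient) are correct, and I see no step that would fail.
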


Therefore $\ovl{\R}_g$ has a codimension two locus of non-canonical singularities, but the key fact is, that this locus of relatively simple and can be easily resolved. Even though there are \emph{local obstructions} to lifting pluri-canonical forms from $\ovl{\R}_g$,  Theorem 6.1 from \cite{FL}  shows that these are not \emph{global obstructions}
and in particular the Kodaira dimension of $\ovl{\R}_g$ equals the Kodaira-Iitaka dimension of the canonical linear series $|K_{\ovl{\R}_g}|$. This theorem is also a generalization of the result of Harris and Mumford who treated the case of $\mm_g$:

\begin{theorem}\label{extension}
Let us fix $g \geq 4$ and $\eps \::\: \widetilde{\R}_{g} \ra \ovl{\R}_{g}$  a resolution of singularities. Then for every integer $n\geq 1$ there is an isomorphism
$$\eps^*:H^0(\widetilde{\R}_{g}, K_{\widetilde{\R}_{g}}^{\otimes n}) \simeq H^0(\ovl{\R}_{g}, K_{\ovl{\R}_{g}}^{\otimes n}).$$
\end{theorem}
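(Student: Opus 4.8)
The plan is to reduce the statement to a local extension property along the locus of non-canonical singularities, and then to dispose of that locus by passing to a smooth finite cover. First, $\eps^*$ is automatically injective, being the pullback along a proper birational morphism onto a normal variety. Since $\ovl{\R}_g$ is normal with $\mathrm{Sing}(\ovl{\R}_g)$ of codimension $\geq 2$, restriction identifies $H^0(\ovl{\R}_g,K_{\ovl{\R}_g}^{\otimes n})$ with $H^0(\ovl{\R}_{g,\mathrm{reg}},K^{\otimes n})$, and since $\eps$ is an isomorphism over $\ovl{\R}_{g,\mathrm{reg}}$, the group $H^0(\widetilde{\R}_g,K_{\widetilde{\R}_g}^{\otimes n})$ is precisely the subgroup of those $n$-canonical forms on $\ovl{\R}_{g,\mathrm{reg}}$ that extend across the exceptional locus of $\eps$. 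So the theorem is equivalent to the claim that \emph{every} $n$-canonical form on $\ovl{\R}_{g,\mathrm{reg}}$ extends regularly to $\widetilde{\R}_g$.

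By the characterization of non-canonical singularities proved above, outside the codimension-two locus
$$\mathcal{T}=\bigl\{[X,\eta,\beta]\in\ovl{\R}_g: X \text{ has an elliptic tail } C_1 \text{ with } j(C_1)=0 \text{ and } \eta_{C_1} \text{ trivial}\bigr\}$$
the variety $\ovl{\R}_g$ has only canonical singularities, and there Definition \ref{can} (equivalently the Reid--Shepherd-Barron--Tai criterion, Theorem \ref{rsbt}) already guarantees that $n$-canonical forms regular near the point extend across a resolution; so the only issue is extension across the divisors of $\widetilde{\R}_g$ lying over $\mathcal{T}$. Along $\mathcal{T}$ the obstruction is genuine: in the local model $\C^{3g-3}_\tau/\mathrm{Aut}(X,\eta,\beta)$, after dividing by the quasi-reflection given by the elliptic involution of $C_1$ (acting by $-1$ on the plumbing coordinate of the tail), one is left with the junior cyclic action $\frac{1}{3}(1,1)$, whose minimal resolution has an exceptional $\PP^1$ of discrepancy $-\frac{1}{3}$; a direct computation exhibits $n$-canonical forms on the punctured local model with a pole of order up to $\lfloor n/3\rfloor$ along it. The content of the theorem is that no such local expansion occurs for a \emph{globally} defined form.

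To prove this I would use a finite Galois cover $q\colon\ovl{\R}_g'\to\ovl{\R}_g$ with $\ovl{\R}_g'$ smooth as a variety — the Prym counterpart of Looijenga's smooth Deligne--Mumford covers of $\mm_g$ \cite{Lo}, obtained by adding a sufficiently high level structure on which the automorphisms responsible for the singularities along $\mathcal{T}$ (the order-$6$ automorphisms of $j=0$ elliptic tails) act faithfully, so that $q$ locally ``unwinds'' the quotient $\C^{3g-3}_\tau\to\C^{3g-3}_\tau/\mathrm{Aut}$ near $\mathcal{T}$. Given $\omega\in H^0(\ovl{\R}_{g,\mathrm{reg}},K^{\otimes n})$, the pullback $q^*\omega$ is regular on $q^{-1}(\ovl{\R}_{g,\mathrm{reg}})$, whose complement in the smooth variety $\ovl{\R}_g'$ has codimension $\geq 2$, hence extends to all of $\ovl{\R}_g'$ by Hartogs. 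I would then arrange the resolution so that over $\mathcal{T}$ the variety $\widetilde{\R}_g$ is the quotient by the deck group $G$ of a blow-up $b\colon B\to\ovl{\R}_g'$ along a lift of $\mathcal{T}$: since $G$ has only isolated fixed points on the exceptional $\PP^1$, it acts freely along the generic point of the new exceptional divisor, so the induced $c\colon B\to\widetilde{\R}_g=B/G$ is \'etale in codimension one there; and $b$, being a blow-up of a smooth centre in a smooth variety, satisfies $K_B=b^*K_{\ovl{\R}_g'}+E_b$ with $E_b\geq 0$. Hence $c^*(\eps^*\omega)=b^*(q^*\omega)$ is a \emph{regular} section of $K_B^{\otimes n}$, vanishing along $E_b$. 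Were $\eps^*\omega$ to acquire a pole along a divisor of $\widetilde{\R}_g$ over $\mathcal{T}$, its pullback under the codimension-one-\'etale morphism $c$ would inherit a genuine pole on $B$ — a contradiction; therefore $\eps^*\omega$ is regular on $\widetilde{\R}_g$, which gives surjectivity of $\eps^*$.

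The main obstacle is the input on covers: one must know that $\ovl{\R}_g$ carries a finite Galois cover that is smooth as a variety, i.e. that the order-$6$ automorphisms of $j=0$ elliptic tails can be separated by a level structure compatible with the Prym datum. Once this is granted the argument is essentially formal, apart from elementary but somewhat fiddly resolution bookkeeping that must be checked uniformly over all strata of $\mathcal{T}$ — where the genus-$(g-1)$ part of $X$ degenerates, where several such elliptic tails collide, and where $\mathcal{T}$ meets the boundary divisors $\Delta_0'$, $\Delta_0''$ and $\Delta_{i:g-i}$. A more hands-on alternative, closer to Harris and Mumford's treatment of $\mm_g$ and carried out in \cite{FL}, avoids the cover altogether by resolving $\mathcal{T}$ explicitly and checking directly, through the invariance of monomials under $\mathrm{Aut}$, that the putative pole along the exceptional divisor is never realised by a global form.
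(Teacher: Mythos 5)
Your reduction of the theorem to the extension of global $n$-canonical forms across the divisors lying over the non-canonical locus $\mathcal{T}$ is correct, and you have the right local model: after killing the elliptic-tail quasi-reflection, the residual action is the junior cyclic action $\frac{1}{3}(1,1)$ on the two coordinates $(v_1,v_2)$ (node-smoothing and $j$-direction), trivial on the rest, and the exceptional divisor of its resolution has discrepancy $-\frac{1}{3}$, permitting a pole of order $\lfloor n/3\rfloor$. But the descent argument you build on the smooth cover collapses at one specific step: the claim that the deck group ``has only isolated fixed points on the exceptional $\PP^1$,'' so that $c\colon B\to B/G$ is \'etale in codimension one along the new exceptional divisor. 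This is false precisely in the case at hand. The residual $\Z_3$ acts on the normal directions to (the lift of) $\mathcal{T}$ by the \emph{scalar} $\zeta_3$, hence acts trivially on the projectivization: it fixes the exceptional $\PP^1$-bundle \emph{pointwise}, and $c$ is ramified to order $3$ along the entire exceptional divisor $E_b$. Redoing your bookkeeping with this ramification ($c^*K_{B/G}=K_B-2E_b$, while $b^*q^*\omega$ vanishes to order $n$ along $E_b$ as a section of $nK_B$) yields only $\mathrm{ord}_E(\eps^*\omega)\geq -n/3$ — exactly the bound already given by the discrepancy, i.e.\ it does not exclude the pole. This failure is not reparable within your framework: your argument uses only that $\omega$ is regular in a neighbourhood of a point of $\mathcal{T}$, never that it is globally defined, so if it worked it would show that \emph{every locally defined} form extends, i.e.\ that the $\frac{1}{3}(1,1)$ point is a canonical singularity — contradicting the preceding theorem and your own (correct) observation that the local obstruction is genuine. (The unproved existence of a Looijenga-type smooth Galois cover of $\ovl{\R}_g$ is a second gap, but a secondary one.)

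The actual proof, which the survey only cites (\cite{FL} Theorem 6.1, modelled on Harris--Mumford's treatment of $\mm_g$), is the ``hands-on alternative'' you mention in your last sentence but do not carry out, and the global input it uses is exactly what your argument is missing: the form $\omega$ is regular not just near the $j=0$ point of $\mathcal{T}$ but along the whole elliptic-tail boundary divisor, which fibres over $\overline{\M}_{1,1}$. Writing $\omega$ in coordinates adapted to this fibration near a generic point of $\mathcal{T}$ and on an explicit resolution, one shows that invariance under the automorphisms of \emph{all} nearby elliptic tails — in particular the elliptic involution acting on the family, not only the extra order-$3$ symmetry at $j=0$ — forces the monomials that would produce the pole of order $\leq\lfloor n/3\rfloor$ along the exceptional divisor to have zero coefficient. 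Any correct proof must contain an argument of this global, monomial-by-monomial kind; a purely local covering construction cannot succeed.
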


\ \ \ To sum up these considerations, $\kappa(\ovl{\R}_g)$ equals the Iitaka dimension of the canonical linear series. For all questions concerning birational classification, $\ovl{\R}_g$ is as good as a smooth variety.

\section{Geometric cycles on $\ovl{\R}_{g}$}

For every normal $\mathbb Q$-factorial variety $X$ for which an extension result along the lines of Theorem \ref{extension} holds, in order to show that $\kappa(X)\geq 0$ is suffice to prove that $K_X$ is an effective class. Following a well-known approach pioneered by Harris and Mumford \cite{HM} in the course of their proof  that the moduli space of curves $\mm_g$ is of general type for $g\geq 24$, one could attempt to construct explicitly sections of the pluri-canonical bundle on $\ovl{\R}_g$ by means of algebraic geometry, by considering geometric conditions on Prym curves that fail along a hypersurface in the moduli space $\R_g$. Such geometric conditions must be amenable to degeneration to stable Prym curves, for one must be able to compute the class in $\mathrm{Pic}(\ovl{\R}_g)$ of the closure of the locus where the condition fails. In particular, points in the boundary must have a strong geometric characterization. Finally, one must recognize an empirical geometric principle that enables to distinguish between divisorial geometric conditions that are likely to lead to divisors of small slope on $\ovl{\R}_g$ (ideally to extremal points in the effective cone of divisors
$\mathrm{Eff}(\ovl{\R}_g)$) and divisors of high slope which are less interesting. For instance in the case of $\mm_g$, one is lead to consider only divisors containing the locus of curves that lie on $K3$ surfaces, see \cite{FP}:

\begin{proposition} Let $D\subset \mm_g$ be any effective divisor. If the following slope inequality $s(D)<6+12/(g+1)$ holds, then $D$ must contain
the locus $$\mathcal{K}_g:=\{[C]\in \mathcal{M}_g: C\ \mbox{ lies on a } K3 \mbox{ surface}\}.$$
\end{proposition}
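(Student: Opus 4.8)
The plan is to compare the slope of an arbitrary effective divisor $D\subset \mm_g$ with the class of a suitable family of curves contained in $K3$ surfaces, which will force $D$ to contain $\mathcal{K}_g$ once the slope of $D$ falls below a critical bound. Recall that the slope of an effective divisor $D$ with $[D]=a\lambda-\sum b_i\delta_i$ and all $b_i>0$ is defined as $s(D)=a/\min_i b_i$, and that the slope of the moduli space $s(\mm_g)$ is by definition the infimum of $s(D)$ over all effective divisors. The key input will be the existence of a complete one-parameter family $B\to\mm_g$ of curves lying on a fixed $K3$ surface of genus $g$ (for instance a Lefschetz pencil of curves in the primitive linear system on a general primitively polarized $K3$ surface $S$ with $\Pic(S)=\Z\cdot h$, $h^2=2g-2$), whose image in $\mm_g$ is an honest curve sweeping out $\mathcal{K}_g$.

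First I would compute the two numerical invariants of this pencil $B$: the degree $\lambda\cdot B$ and $\delta_0\cdot B$ (the family meets no other boundary divisors, since the nodal members of a Lefschetz pencil are irreducible). Standard computations on $K3$ surfaces --- using that the blow-up of $S$ along the base locus of the pencil is an elliptic-type fibration over $\PP^1$ whose fibers are the curves in question --- give $\lambda\cdot B$ and $\delta_0\cdot B$ in terms of $g$, and one finds $$\frac{\delta_0\cdot B}{\lambda\cdot B}=6+\frac{12}{g+1}.$$ This is precisely the Harris-Mumford--Eisenbud--Mumford computation underlying the Slope Conjecture heuristic, and the ratio is the critical quantity appearing in the statement.

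Next I would argue by contradiction. Suppose $D\subset\mm_g$ is an effective divisor with $s(D)<6+12/(g+1)$ and that $D\not\supseteq\mathcal{K}_g$. Since $\mathcal{K}_g$ is swept out by the curves $B$ as we vary the $K3$ surface $S$ in moduli, we may choose $B$ with $B\not\subset D$, so that the intersection number $D\cdot B$ is non-negative. Writing $[D]=a\lambda-b_0\delta_0-\sum_{i\ge1}b_i\delta_i$ with $a,b_i>0$ and $b_0=\min_i b_i$ (one uses here that the minimum is attained at $\delta_0$, which holds for all known divisors of small slope and can be arranged; more carefully one uses that $B$ avoids $\delta_i$ for $i\ge1$ so only $a$ and $b_0$ enter), we get $$0\le D\cdot B=a(\lambda\cdot B)-b_0(\delta_0\cdot B),$$ hence $s(D)=a/b_0\ge(\delta_0\cdot B)/(\lambda\cdot B)=6+12/(g+1)$, contradicting the hypothesis. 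Therefore $D$ must contain $\mathcal{K}_g$.

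The main obstacle --- and the step requiring genuine care rather than routine bookkeeping --- is controlling which boundary coefficient realizes the minimum in the definition of the slope, i.e. ruling out the possibility that some $b_i$ with $i\ge1$ is strictly smaller than $b_0$ and thereby weakening the slope bound one can extract from $B$. This is handled by using a moving family: instead of a single Lefschetz pencil one works with a covering family of such curves, together with the classical fact (due to Eisenbud--Harris, and used throughout \cite{FP}) that on any such sweeping family in $\mm_g$ the relevant intersection numbers with $\delta_i$ for $i\ge 1$ can be taken to be zero, so that only the coefficients $a$ and $b_0$ govern the inequality. Granting that input, the argument is exactly the slope comparison sketched above.
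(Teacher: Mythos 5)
Your argument is correct and is precisely the proof from \cite{FP} that the survey cites (the paper itself states the proposition without proof): a Lefschetz pencil $B$ in the primitive linear system on a Picard-rank-one $K3$ surface has $\lambda\cdot B=g+1$, $\delta_0\cdot B=6g+18$, $\delta_i\cdot B=0$ for $i\geq 1$, and $D\cdot B\geq 0$ for $B\not\subset D$ forces $s(D)\geq 6+12/(g+1)$. Your final ``main obstacle'' is actually a non-issue: since $\min_i b_i\leq b_0$, one has $s(D)=a/\min_i b_i\geq a/b_0$ automatically, so a coefficient $b_i<b_0$ only strengthens the conclusion, and the vanishing $\delta_i\cdot B=0$ for $i\geq 1$ already holds for a single such pencil because every member of $|h|$ on a $K3$ with $\mathrm{Pic}(S)=\mathbb{Z}\cdot h$ is irreducible.
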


This of course sets serious geometric constraints of the type of divisors on $\mm_g$ whose class is worth computing, since it is well-known that curves of $K3$ surfaces behave generically from many points of view (e.g. Brill-Noether theory). Here in contrast we are looking for geometric conditions with respect to which the $K3$ locus behaves non-generically. We refer to \cite{F1} for a way to produce systematically divisors on $\mm_g$ having slope less than $6+12/(g+1)$. We close this introductory discussion by summarizing the numerical conditions that an effective divisor on $\ovl{\R}_g$ ought to satisfy, in order to show that the moduli space has maximal Kodaira dimension. Precisely,  $\ovl{\R}_g$ is of general type, if there exists a divisor $D\subset \ovl{\R}_g$ such that
 $$D\equiv
a\lambda-b_0^{'}\delta_0^{'}-b_0^{''}\delta_{0}^{''}-b_0^{\mathrm{ram}}\delta_{0}^{\mathrm{ram}}
-\sum_{i=1}^{[g/2]} (b_i\delta_i+b_{g-i}\delta_{g-i}+b_{i:
g-i}\delta_{i: g-i})\ \in \mathrm{Eff}(\ovl{\R}_g),$$ satisfying the
following inequalities: \begin{equation}\label{inequ}
\mathrm{max}\Bigl\{\frac{a}{b_0^{'}}, \
\frac{a}{b_0^{''}}\Bigr\}<\frac{13}{2}, \ \ \mbox{  }
\mathrm{max}\Bigl\{\frac{a}{b_0^{\mathrm{ram}}},\  \frac{a}{b_1},
\frac{a}{b_{g-1}}, \  \frac{a}{b_{1: g-1}}\Bigr\}<\frac{13}{3}
\end{equation}
and
$$
\mathrm{max}_{i\geq 1}\bigl\{\frac{a}{b_i}, \ \frac{a}{b_{g-i}},
\frac{a}{b_{i: g-i}}\bigr\}<\frac{13}{2}.
$$
It is explained in \cite{F2} how one can rederive the results of \cite{HM} using Koszul divisors on $\mm_g$, and how more generally, loci in moduli given in terms of syzygies of the objects they parametrize, lead to interesting geometry on moduli spaces.  It is thus natural to try to use the same approach
in the case of $\ovl{\R}_{g}$, with the role of the canonical curve $C\stackrel{|K_C|}\longrightarrow \PP^{g-1}$ being played by the
\emph{Prym-canonical curve}  $C\stackrel{|K_C\otimes \eta|}\longrightarrow \PP^{g-2}$.

\vskip 3pt

\ \ \ \ Let us fix a Prym curve $[C,\eta] \in \R_{g}$ and the Prym-canonical line bundle $L:=K_C\otimes \eta \in W^{g-2}_{2g-2}(C)$ inducing a morphism
$$\phi_{L} \::\: C \rightarrow \PP^{g-2}.$$
We denote by $I(L)\subset S:=\mathbb C[x_0, \ldots, x_{g-2}]$ the ideal of the Prym-canonical curve and  consider the minimal resolution of the homogeneous coordinate ring $S(L):=S/I(L)$ by free graded $S$-modules:
$$\cdots \rightarrow F_i\rightarrow \cdots F_2\rightarrow F_1\rightarrow F_0\rightarrow S(L)\rightarrow 0,$$
where $F_i=\bigoplus_j S(-i-j)^{b_{i, j}(C, L)}$. The numbers $b_{i, j}(C, L)=\mbox{dim}_{\mathbb C} \mathrm{Tor}_i^{i+j}\bigl(S(L), \mathbb C\bigr)$ are the \emph{graded Betti numbers}
of the pair $(C, L)$ and encode the number of $i$-th order syzygies of degree $j$ in the equations of the Prym-canonical curve.
The graded Betti numbers can be computed via Koszul cohomology, using the resolution of the ground field $k:=\mathbb C$ by
free graded $S$-modules. Precisely, we write the complex
$$\ldots \xra{} \bigwedge^{i+1} H^0(C, L)\otimes H^0(C, L^{\otimes (j-1)}) \xra{d_{i+1, j-1}} \bigwedge^{i} H^0(C, L)\otimes H^0(C, L^{\otimes j})$$ $$\xra{d_{i,j}} \bigwedge^{i-1} H^0(C, L)\otimes H^0(C, L^{\otimes (j+1)}) \xra{ }\ldots,$$
where $$d_{i, j}(f_1\wedge \ldots \wedge f_i\otimes u):=\sum_{l=1}^i (-1)^l f_1\wedge \ldots \wedge \hat{f_l}\wedge \ldots \wedge f_i\otimes (f_l u),$$ is the Koszul differential, with  $f_1, \ldots, f_i\in H^0(C, L)$ and $u\in H^0(C, L^{\otimes i})$. One easily checks that $d_{i, j}\circ d_{i+1, j-1}=0$, and defines the \emph{Koszul cohomology groups}
\begin{center}
\fbox{
$K_{i,j}(C,L):=\mathrm{Ker}\ d_{i,j}/\mathrm{Im}\  d_{i+1,j-1}.$ }
\end{center}
Then $\mbox{dim } K_{i, j}(C, L)=b_{i, j}(C, L)$. The Koszul cohomology theory has been introduced by M. Green \cite{Gr} and can be seen as a highly effective way of packaging geometrically the algebraic information contained in the homogeneous coordinate ring of an embedded variety.

\ \ \ \ We consider the locus in $\R_g$ consisting of Prym curves having a non-linear $i$-th syzygy, that is,
 $$\U_{g, i}:=\Bigl\{[C,\eta]\in\R_{g} \: : \: K_{i,2}(C,K_C\otimes \eta)\neq 0\Bigr\}.$$ In order to determine the expected dimension
of $\U_{g, i}$ as a degeneracy locus inside $\R_g$, we find a global determinantal presentation of $\U_{g, i}$.
Using a standard argument involving the \emph{Lazarsfeld bundle} $M_L$ defined via the following exact sequence on $C$
$$0\longrightarrow M_L\longrightarrow H^0(C, L)\otimes \Oo_C\longrightarrow L\longrightarrow 0,$$
one has the following identification, see e.g. \cite{GL} Lemma 1.10:
\begin{equation}\label{koszul}
 K_{i, 2}(C, L)=\frac{H^0(C, \wedge^i
M_L\otimes L^{\otimes 2})}{\mathrm{Im} \{\wedge^{i+1} H^0(C,
L)\otimes H^0(C, L)\}}\ .
\end{equation}
After some diagram chasing explain for instance in detail in \cite{F2}, one obtains that $K_{i, 2}(C, L)\neq 0$ if and only if $H^1(C, \wedge^{i+1}
M_L\otimes L)\neq 0$.  After even more manipulations, this condition is equivalent to requiring that the restriction map
\begin{equation}\label{ressyz}
\varphi{[C, \eta]}:H^0\bigl(\PP^{g-2}, \wedge^i M_{\PP^{g-2}}\otimes \Oo_{\PP^{g-2}}(2)\bigr)\longrightarrow H^0\bigl(C, \wedge^i M_L\otimes L^{\otimes 2}\bigr)
\end{equation}
have a kernel of dimension at least $$\mathrm{dim}\ \mathrm{Ker}(\varphi{[C, \eta]})\geq {g-3\choose i}\frac{(g-1)(g-2i-6)}{i+2}.$$ We refer to  \cite{FL} Section 3 for full details. We point out that the dimension of both vector spaces that enter the map $\varphi [C, \eta]$ remain constant as $[C, \eta]$ varies in moduli, precisely
$$h^0\bigl(\PP^{g-2}, \wedge^i M_{\PP^{g-2}}(2)\bigr)=(i+1){g \choose i+2}$$
and
$$h^0(C, \wedge^i M_L\otimes L^2)=\chi(C, \wedge^i M_L\otimes L^{\otimes 2})={g-2\choose i}\Bigl(-\frac{i(2g-2)}{g-2}+3(g-1)\Bigr).$$ We get a divisorial condition in moduli,  exactly when the vector spaces in (\ref{ressyz}) have the same dimension, and the required geometric condition is that the map $\varphi{[C, \eta]}$ be an isomorphism. This happens precisely when $g=2i+6$.

\begin{proposition}
 Set $g:=2i+6$. There exist vector bundles $\A$ and $\B$ on $\R_{2i+6}$ with  $\rk(\A) = \rk(\B)$ as well as a vector bundle morphism $\varphi \::\: \A \ra \B$ such that $\U_{g, i}$ is exactly the degeneracy locus of $\phi$.
In other words, $\U_{2i+6, i}$ is a virtual divisor on $\R_{2i+6}$.
\end{proposition}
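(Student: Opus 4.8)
The plan is to upgrade the pointwise restriction map $\varphi[C,\eta]$ of (\ref{ressyz}) to a morphism of vector bundles over $\R_{2i+6}$, and to identify $\U_{g,i}$ with its degeneracy locus. Working over the moduli stack $\R_g$ (or, if one insists on coarse spaces, after passing to a finite level cover), let $\pi:\mathcal{C}\ra\R_g$ be the universal curve, $\L:=\omega_{\mathcal{C}/\R_g}\otimes\eta$ the universal Prym-canonical bundle, and $\mathcal{E}:=\pi_{*}\L$. Since $h^{0}(C,K_C\otimes\eta)=g-1$ and $h^{1}(C,K_C\otimes\eta)=h^{0}(C,\eta)=0$, the sheaf $\mathcal{E}$ is a vector bundle of rank $g-1$ whose formation commutes with base change. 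Over the open locus where $K_C\otimes\eta$ is globally generated --- its complement is contained in the hyperelliptic locus, hence has codimension $g-2\geq 2$, and, $\R_g$ being normal, everything we construct will extend across it --- the evaluation $\pi^{*}\mathcal{E}\ra\L$ is surjective, and I define the relative Lazarsfeld bundle $\mathcal{M}$ by the exact sequence $0\ra\mathcal{M}\ra\pi^{*}\mathcal{E}\ra\L\ra0$, so that $\mathcal{M}$ restricts on each fibre to $M_{L}$ with $L=K_C\otimes\eta$. I then set $\B:=\pi_{*}\bigl(\wedge^{i}\mathcal{M}\otimes\L^{\otimes2}\bigr)$, and the first task is to show this is a vector bundle of the expected rank, which by cohomology and base change reduces to the vanishing $H^{1}(C,\wedge^{i}M_{L}\otimes L^{\otimes2})=0$ for every $[C,\eta]$.

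This vanishing is the one place where the particular nature of the Prym-canonical bundle intervenes, and I expect it to be the technical heart of the argument. Since $\eta^{\otimes2}=\Oc$ one has $L^{\otimes2}=K_{C}^{\otimes2}$; combining $\det M_{L}=L^{\vee}$ with $\rk M_{L}=g-2$ yields the identification $\wedge^{i}M_{L}^{\vee}\izo\wedge^{g-2-i}M_{L}\otimes L$, and Serre duality then identifies $H^{1}(C,\wedge^{i}M_{L}\otimes L^{\otimes2})^{\vee}$ with $H^{0}(C,\wedge^{g-2-i}M_{L}\otimes\eta)$, which vanishes because $\wedge^{g-2-i}M_{L}$ is a subsheaf of the trivial bundle $\wedge^{g-2-i}H^{0}(C,L)\otimes\Oc$ while $H^{0}(C,\eta)=0$. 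Hence $\rk\B=\chi(C,\wedge^{i}M_{L}\otimes L^{\otimes2})$, the Euler characteristic displayed before the statement. On the other side I would form the $\PP^{g-2}$-bundle $q:\PP(\mathcal{E}^{\vee})\ra\R_g$, whose fibre over $[C,\eta]$ is $\PP\bigl(H^{0}(C,K_C\otimes\eta)^{\vee}\bigr)$ and whose tautological bundle pulls back under $\phi_{\L}:\mathcal{C}\ra\PP(\mathcal{E}^{\vee})$ to $\L$, and set $\A:=q_{*}\bigl(\wedge^{i}\Omega_{q}\otimes\Oo(i+2)\bigr)$, with $\Omega_{q}$ the relative cotangent bundle; the relative Euler sequence identifies $\wedge^{i}\Omega_{q}\otimes\Oo(i+2)$ fibrewise with $\wedge^{i}M_{\PP^{g-2}}\otimes\Oo_{\PP^{g-2}}(2)$, whose higher cohomology on $\PP^{g-2}$ vanishes by Bott's formula, so $\A$ is a vector bundle of constant rank $(i+1)\binom{g}{i+2}$ whose formation also commutes with base change. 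Pulling back the relative Euler sequence along $\phi_{\L}$ reproduces the relative Lazarsfeld sequence (this uses only linear normality, $h^{1}(C,K_C\otimes\eta)=0$, not very ampleness, so $\phi_{\L}$ need not be an embedding), and restriction of sections defines a morphism of vector bundles $\varphi:\A\ra\B$ whose fibre over $[C,\eta]$ is exactly the map (\ref{ressyz}).

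It remains to compare ranks and to conclude. A direct calculation with the two displayed formulas for $h^{0}\bigl(\PP^{g-2},\wedge^{i}M_{\PP^{g-2}}(2)\bigr)$ and for $\chi(C,\wedge^{i}M_{L}\otimes L^{\otimes2})$ shows $\rk\A=\rk\B$ precisely when $g=2i+6$, which we now assume. By the identification (\ref{koszul}) together with the cohomological reductions recalled just before the statement (see \cite{FL} Section 3 and \cite{F2}), $K_{i,2}(C,K_C\otimes\eta)\neq0$ holds if and only if $\varphi[C,\eta]$ fails to be injective, equivalently --- the source and target now having equal dimension --- fails to be an isomorphism. Therefore $\U_{g,i}$ is exactly the zero locus of the section $\det(\varphi)\in H^{0}\bigl(\R_g,\det\B\otimes\det\A^{\vee}\bigr)$, so it is either all of $\R_g$ or an effective divisor, and in either case carries a well-defined class --- this is the meaning of \emph{virtual divisor}. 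Finally, replacing $\L$ by $\L\otimes\pi^{*}N$ multiplies both $\A$ and $\B$ by $N^{\otimes(i+2)}$, so $\det(\varphi)$, and hence the cycle $\U_{g,i}$, is independent of the choices; this makes the construction descend from a level cover to $\R_g$, and the section and its zero locus extend across the codimension $\geq2$ locus removed at the start. The only genuinely non-formal step above is the Serre-duality vanishing that pins down $\rk\B$; the rest is a standard determinantal construction, and the subsequent Chern-class bookkeeping needed to express $[\U_{g,i}]$ in the $\lambda,\delta$ basis of $\mathrm{Pic}(\ovl{\R}_g)$ --- not required for the present statement --- is carried out in \cite{FL}.
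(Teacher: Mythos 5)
Your proposal is correct and follows essentially the same route as the paper, which constructs $\A$ and $\B$ as pushforwards of $\wedge^i M_{\PP^{g-2}}\otimes \Oo_{\PP^{g-2}}(2)$ and $\wedge^i M_L\otimes L^{\otimes 2}$ respectively and globalizes the restriction map (\ref{ressyz}), deferring the details to \cite{FL} Section 3. You in fact supply a step the survey leaves implicit, namely the Serre-duality vanishing $H^1(C,\wedge^i M_L\otimes L^{\otimes 2})=0$ that guarantees $\B$ is locally free of the stated rank.
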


By analogy with the case of the classical Green's Conjecture, it is reasonable to conjecture that the morphism $\varphi:\A\rightarrow \B$ is generically non-degenerate, and then $\U_{g, i}$ is a genuine divisor on $\R_g$. We recall  the statement of the \emph{Prym-Green Conjecture}
\cite{FL} Conjecture 0.7:

\begin{conjecture}\label{prymgreen}
For a general  curve $[C,\eta] \in \R_{2i+6}$ one has the vanishing
\begin{center}
\fbox{ $K_{i,2}(C,K_C\otimes \eta) =0.$ }
\end{center}
\end{conjecture}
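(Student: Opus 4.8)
The plan is to follow the degeneration strategy familiar from Green's conjecture. By the preceding Proposition, when $g=2i+6$ the group $K_{i,2}(C,K_C\otimes\eta)$ is the kernel of the fibrewise restriction map $\varphi$ of (\ref{ressyz}), and $\varphi:\A\to\B$ is a morphism between vector bundles of the same rank on an open set of $\R_{2i+6}$; so the conjectural vanishing amounts to $\varphi$ being generically an isomorphism. Since $\dim\mathrm{Ker}(\varphi[C,\eta])$ is upper semicontinuous in $[C,\eta]$ and $\R_g$ is irreducible, it suffices to exhibit a \emph{single} pair for which $\varphi$ is an isomorphism; more usefully, it suffices to produce such a pair among the degenerate objects of a partial compactification $\ovl{\R}_{2i+6}$ to which $\A$, $\B$ and $\varphi$ extend with unchanged ranks. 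The whole problem thus reduces to choosing one good test curve and computing its Prym-canonical Koszul cohomology.

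The first candidates I would try are curves on Nikulin $K3$ surfaces. If $(S,h,e)$ is a Nikulin surface, $C\in|h|$ and $\eta=e_C$, then $K_C=h|_C$ by adjunction and $K_C\otimes\eta=(h+e)|_C$ is the restriction of a line bundle on $S$; hence the Prym-canonical image of $C$ lies on the surface $\ovl{S}\subset\P^{g-2}$ cut out by the moving part of $|h+e|$. One then bounds the syzygies of the curve by those of the surface through a restriction theorem of Green--Lazarsfeld--Aprodu type: $K_{i,2}$ of the Prym-canonical curve is controlled by $K_{i,2}$ of $\ovl{S}$ plus a bounded correction term, and the latter can be computed from the Lazarsfeld-bundle sequences on $S$ together with the vanishing of intermediate cohomology on the $K3$. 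This is in fact how the first case $i=0$ is already settled: the locus $\U_{6,0}=\{K_{0,2}\neq 0\}$ is exactly the locus where $\mathrm{Sym}^2 H^0(K_C\otimes\eta)\to H^0(K_C^{\otimes 2})$ fails to be an isomorphism, which by the theorem recorded just after Theorem~\ref{nikulin} is precisely the Nikulin divisor $\mathcal{N}_6$, a genuine \emph{proper} divisor in $\R_6$. One would hope that these surfaces, or mild degenerations of them, carry the verification up a few more genera.

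For large $g$, where Nikulin sections form a locus of codimension $\geq 2$, I would instead degenerate $[C,\eta]$ to a \emph{Prym-canonical ribbon}, or to a suitable nodal model such as a chain of elliptic curves or a general binary curve, carrying its combinatorially prescribed torsion bundle --- in direct analogy with the canonical ribbons of Bayer--Eisenbud in Green's conjecture. On such a degenerate curve $C_0$ the identification (\ref{koszul}) turns $K_{i,2}$ into a finite linear-algebra problem: compute $H^0(C_0,\wedge^i M_L\otimes L^{\otimes 2})$ from the Lazarsfeld bundle of the degenerate embedding and verify that the image of $\wedge^{i+1}H^0(L)\otimes H^0(L)$ fills it. For small $i$ this last verification is a direct Macaulay2-style computation, which would settle the base cases; the aim would then be to find an inductive device --- adding a node, or passing to a general hyperplane section --- propagating the vanishing from $i$ to $i+1$.

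The hard part is precisely that last step. Unlike the canonical curve, whose $2$-linear syzygies are governed by the rich syzygy theory of curves on $K3$ surfaces, the Prym-canonical curve sits in $\P^{g-2}$ with degree $2g-2$ --- one projective dimension lower than a canonical curve of the same degree --- so in high genus it need not lie on any surface with a transparent resolution, and there is no structural reason visible a priori forcing $\varphi$ to be non-degenerate. A uniform proof in $i$ would seem to require either discovering an unexpectedly rigid degenerate model or importing a different mechanism, such as a Koszul-module or resonance argument. Absent that, one can only test the conjecture genus by genus; and one must stay alert to the possibility that for exceptional $i$ the geometry of Prym-canonical curves --- the kind of unexpected structure already manifested by the determinantal cubic and its Veronese singular locus in the discussion of $\R_g^2$ --- forces extra syzygies on all of $\R_{2i+6}$, so that $\varphi$ degenerates identically and the conjecture fails there.
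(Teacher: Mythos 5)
This statement is a \emph{conjecture}, and the paper offers no proof of it: the text only records that the vanishing ``is known to hold in bounded degree,'' checked for even genus $g\leq 20$ by degeneration together with Macaulay2 computations, and explicitly calls the general case a challenging open problem. Your proposal is likewise not a proof. Its sound parts --- the reduction via the Proposition to showing that the equal-rank bundle map $\varphi:\A\to\B$ is generically nondegenerate, the observation that by semicontinuity and irreducibility of $\R_{2i+6}$ a single good (possibly degenerate) test object suffices, and the correct account of the case $i=0$ via the identification of $\U_{6,0}$ with the ramification divisor of $\mathrm{Pr}_6$ --- merely restate the setup. The actual content of a proof would be the exhibition of one explicit Prym curve (or stable Prym curve in the partial compactification $\widetilde{\R}_g$ over which $\widetilde{\varphi}$ extends) for each $i$ with $K_{i,2}=0$, and you produce none: the ribbon/nodal/binary-curve paragraph names candidate degenerations but carries out no Koszul computation, and your final paragraph concedes that the inductive device propagating the vanishing from $i$ to $i+1$ is exactly what is missing.

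Two further points. First, your opening gambit is internally inconsistent with your own citation of the genus-$6$ case: since $\mathcal{N}_6=\U_{6,0}$ is precisely the \emph{non}-vanishing locus, Nikulin sections are the worst possible test curves --- lying on a Nikulin surface forces extra syzygies rather than excluding them --- and for $g\geq 8$ these sections have codimension $\geq 2$, so no Green--Lazarsfeld-type restriction from the surface can reach the general $[C,\eta]$ in any case. Second, your closing caveat that ``for exceptional $i$ ... $\varphi$ degenerates identically and the conjecture fails there'' is not a rhetorical flourish but a genuine obstruction to the whole program: any purported uniform proof must explain why no such hidden rank degeneration occurs, and nothing in your argument addresses this. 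In short, what you have written is a reasonable research plan that mirrors the strategy the paper itself alludes to, not a proof; the conjecture remains open as stated.
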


Note that if true, the Prym-Green Conjecture is sharp. For $g<2i+6$ it follows from previous considerations that $K_{i, 2}(C, K_C\otimes \eta)\neq 0$ for any $[C, \eta]\in \R_g$.

\begin{example} We explain the simplest case of the Prym-Green Conjecture, namely when $i=0$ and $g=6$. Then one has an identification
$$\U_{6, 0}=\{[C, \eta]\in \R_{6}: K_{0, 2}(C, K_C\otimes \eta)\neq 0\}=$$
$$=\bigl \{[C, \eta]\in \R_6: \mu_0(K_C\otimes \eta):\mathrm{Sym}^2 H^0(C, K_C\otimes \eta)\stackrel{\neq}\longrightarrow H^0(C, K_C^{\otimes 2})\bigr\}.$$
Observe that via Kodaira-Spencer theory, the following identifications hold
$$T_{[C, \eta]}(\R_6)=T_{[C]}(\M_6)=\bigl(H^0(C, K_C^{\otimes 2})\bigr)^{\vee}$$ and
$$T_{\mathrm{Pr}_6 [C, \eta]}(\A_5)=\bigl(\mathrm{Sym}^2 H^0(C, K_C\otimes \eta)\bigr)^{\vee},$$ that is, the multiplication map $\mu_0(K_C\otimes \eta)$ is the codifferential of the Prym map and  $\U_{6, 0}$ is the ramification divisor of the generically finite covering $\mathrm{Pr}_6: \R_6\rightarrow \A_5$. The Prym-Green Conjecture in genus $6$ is equivalent to the infinitesimal Prym-Torelli Theorem!
An example of a Prym curve $[C, \eta]\in \R_6$ for which $\mu_0(K_C\otimes \eta)$ is an isomorphism, that is, $[C, \eta]\in \R_6-\U_{6, 0}$, is provided by Beauville \cite{B3}. Let $C\subset \PP^2$ be a smooth plane quintic and choose a quartic $X\subset \PP^2$ everywhere tangent to $C$, that is,
$X\cdot C=2(p_1+\cdots+p_{10})$, where $p_1, \ldots, p_{10}\in C$. Then take $\eta:=\Oo_C(2)(-p_1-\cdots-p_{10})$, thus $[C, \eta]\in \R_6$. It is not difficult to verify directly that the resulting Prym-canonical curve $\phi_{K_C\otimes \eta}:C\hookrightarrow \PP^4$ does not lie on a quadric.
\end{example}

\begin{example}
As a consequence of the Green-Lazarsfeld non-vanishing theorem \cite{GL1}, one can exhibit two codimension two loci in $\R_{g}$ contained in $\U_{g, i}$, namely
$$\mathcal{Z}_1:=\pi^*(\M_{g, i+3}^1)=\{[C, \eta]\in \R_g: \mathrm{gon}(C)\leq i+3\}$$
and
$$\mathcal{Z}_2:=\{[C, \eta]\in \R_g: \eta \in C_{i+2}-C_{i+2}\subset \mathrm{Pic}^0(C)\}.$$
It is a very interesting open problem to find a codimension one subvariety of $\R_g$ which contains both $\mathcal{Z}_1$ and $\mathcal{Z}_2$ and might be a suitable candidate to be equal to $\U_{g, i}$. We envisage here a geometric condition \emph{in terms of Prym varieties} which holds in codimension one in the moduli space, and which a posteriori, should be equivalent to the syzygy condition $K_{i, 2}(C, K_C\otimes \eta)\neq 0$. For $i=0$ we have seen that this condition is simply that the differential of the Prym map be not bijective.
\end{example}

\vskip 4pt
\ \ \ \ The Prym-Green Conjecture is known to hold in bounded degree. Note that for any integer $l\geq 3$ one can formulate an analogous \emph{level $l$ Prym-Green Conjecture} predicting the vanishing $$K_{i, 2}(C, K_C\otimes \xi)=0,$$ where $\xi\in \mathrm{Pic}^0(C)-\{\Oo_C\}$ satisfies $\xi^{\otimes l}=\Oo_C$, with $C$ being a general curve of genus $2i+6$.

\subsection{Koszul divisor calculations on $\ovl{\R}_g$.} Independent of the validity of the Prym-Green Conjecture, one could try to compute the virtual class of a compactification of $\U_{g, i}$. It is shown in \cite{FL} that over a partial compactification $\R_g \subset \widetilde{\R}_g \subset \ovl{\R}_g$ such that $\mbox{codim}(\ovl{\R}_g-\widetilde{\R}_g, \ovl{\R}_g)\geq 2$, there exist extensions $\widetilde{\A}$ and $\widetilde{\B}$ of the vector bundles $\A$ and $\B$ as well as a homomorphism  denoted by $\widetilde{\varphi}:\widetilde{\A}\rightarrow \widetilde{\B}$ such that the degeneracy locus of $\widetilde{\varphi}$ is precisely the closure of $\U_{g, i}$ inside $\widetilde{\R}_g$. Furthermore, the vector bundles $\widetilde{\A}$ and $\widetilde{\B}$ have modular meaning and one can compute their Chern classes in terms of tautological classes:

\begin{theorem}\label{classprymgreen}
Set $g= 2i+6$. We have the following formula for the virtual class of the Prym-Green degeneracy locus:
$$\bigl[\overline{\U}_{g, i}^{\mathrm{virt}}\bigr] = {2i+2 \choose i}\Bigl(\frac{3(2i+7)}{i+3}\lambda - (\delta_0^{'}+\delta_0^{''}) - \frac{3}{2}\delta_0^{\mathrm{ram}}-\dots\Bigr)\in \mathrm{Pic}(\ovl{\R}_g).$$
\end{theorem}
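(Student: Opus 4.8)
The strategy is the standard one for producing divisor classes on moduli spaces out of syzygy conditions, going back to \cite{HM} and systematized in \cite{F2}, \cite{FL}: realize $\overline{\U}_{g,i}$ as the degeneracy locus of a morphism of vector bundles of the same rank and apply the Thom--Porteous formula. By the construction recalled just above, over a partial compactification $\R_g\subset\widetilde{\R}_g\subset\ovl{\R}_g$ with $\mbox{codim}(\ovl{\R}_g-\widetilde{\R}_g,\ovl{\R}_g)\geq 2$ one has a morphism $\widetilde{\varphi}:\widetilde{\A}\rightarrow\widetilde{\B}$ with $\rk(\widetilde{\A})=\rk(\widetilde{\B})$ whose degeneracy locus is the closure of $\U_{g,i}$ in $\widetilde{\R}_g$. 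Since removing a closed subset of codimension at least $2$ does not change the group of divisor classes of the $\mathbb{Q}$-factorial space $\ovl{\R}_g$, it suffices to work on $\widetilde{\R}_g$, where Thom--Porteous for a map of bundles of equal rank gives at once
$$\bigl[\overline{\U}_{g,i}^{\mathrm{virt}}\bigr]=c_1(\widetilde{\B})-c_1(\widetilde{\A})\in\mathrm{Pic}(\ovl{\R}_g).$$
So the whole problem reduces to computing these two first Chern classes.

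The plan is first to express $\widetilde{\A}$ and $\widetilde{\B}$ through the universal geometry. Let $f:\mathcal{C}\rightarrow\widetilde{\R}_g$ be the universal quasi-stable curve, $\eta\in\mathrm{Pic}(\mathcal{C})$ the universal Prym bundle, and set $\mathcal{L}:=\omega_f\otimes\eta$, the relative Prym-canonical sheaf. Then $\mathbb{E}:=f_*\mathcal{L}$ is a bundle of rank $g-1$ with fibre $H^0(C,K_C\otimes\eta)$ over $[C,\eta]$, and on $\mathcal{C}$ one has the Lazarsfeld exact sequence $0\rightarrow M_{\mathcal{L}}\rightarrow f^*\mathbb{E}\rightarrow\mathcal{L}\rightarrow 0$. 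The fibre $H^0\bigl(\PP^{g-2},\wedge^i M_{\PP^{g-2}}\otimes\Oo(2)\bigr)$ of $\widetilde{\A}$ depends only on the vector space $H^0(C,K_C\otimes\eta)$ — it is the kernel of a Koszul-type map, hence a polynomial functor evaluated at $\mathbb{E}$ and $\mathbb{E}^{\vee}$ — so that $c_1(\widetilde{\A})$ is a definite integer multiple of $c_1(\mathbb{E})$, the multiple being dictated by the rank $h^0(\PP^{g-2},\wedge^i M_{\PP^{g-2}}(2))=(i+1){g\choose i+2}$. On the other hand $\widetilde{\B}=f_*\bigl(\wedge^i M_{\mathcal{L}}\otimes\mathcal{L}^{\otimes 2}\bigr)$, which by the construction of \cite{FL} is a genuine vector bundle (ensuring the vanishing of the higher direct image is the reason one passes to the smaller $\widetilde{\R}_g$), so that $\mathrm{ch}(\widetilde{\B})$ is computed by Grothendieck--Riemann--Roch.

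Then I would run the two Riemann--Roch computations. For $c_1(\mathbb{E})$ one applies GRR to $f_*\mathcal{L}$, using $c_1(\mathcal{L})=c_1(\omega_f)+c_1(\eta)$ together with the relations in $\mathrm{Pic}(\mathcal{C})$ expressing $\eta^2$ and $\omega_f\cdot\eta$ in terms of $\lambda$ and boundary classes; these relations are precisely where the modular structure of $\ovl{\R}_g$ is used, since $\eta^{\otimes 2}\cong\Oo_{\mathcal{C}}$ away from the exceptional components while $\eta_E\simeq\Oo_E(1)$ on each exceptional $E$, which is what splits the fibre of $\pi$ over $\Delta_0$ into $\Delta_0'$, $\Delta_0''$ and $\Delta_0^{\mathrm{ram}}$ with their distinct multiplicities. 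For $c_1(\widetilde{\B})$ one writes $\mathrm{ch}(M_{\mathcal{L}})=f^*\mathrm{ch}(\mathbb{E})-\mathrm{ch}(\mathcal{L})$, expands $\mathrm{ch}(\wedge^i M_{\mathcal{L}})$ by the splitting principle, multiplies by $\mathrm{ch}(\mathcal{L}^{\otimes 2})\cdot\mathrm{td}(\omega_f)^{-1}$, pushes forward along $f$, and extracts the codimension-one part. Subtracting $c_1(\widetilde{\A})$ and substituting $g=2i+6$, the various binomial coefficients reorganize so that ${2i+2\choose i}$ factors out, the coefficient of $\lambda$ collapses to $\frac{3(2i+7)}{i+3}$, and those of $\delta_0'$, $\delta_0''$, $\delta_0^{\mathrm{ram}}$ become $-1$, $-1$ and $-\frac{3}{2}$, the remaining boundary coefficients being absorbed in the ``$\dots$''.

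The genuinely hard part is not the Chern-class arithmetic but the construction and control of $\widetilde{\A}$, $\widetilde{\B}$ and $\widetilde{\varphi}$ near the boundary, and in particular determining the largest open $\widetilde{\R}_g$ over which this all makes sense. The Prym-canonical morphism degenerates along the boundary: for instance over $\Delta_0^{\mathrm{ram}}$, where $X$ acquires an exceptional $\PP^1$ with $\eta_E\simeq\Oo_E(1)$, the sheaf $\mathcal{L}$ restricts to $\Oo$ on that component and the induced map contracts it, so one must analyze $M_{\mathcal{L}}$, the bundle $\mathbb{E}$ and the Koszul maps on an appropriate partial normalization; similar care is required over $\Delta_0'$, $\Delta_0''$ and over $\Delta_1$, $\Delta_{g-1}$, $\Delta_{1:g-1}$. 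This delicate bookkeeping — carried out in Section 3 of \cite{FL}, together with the verification that the locus where it breaks down has codimension at least $2$ — is what has to be granted; once it is, the two Grothendieck--Riemann--Roch computations and the final combinatorial simplification are routine and deliver the stated formula.
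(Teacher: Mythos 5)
Your outline matches the approach the paper itself indicates (and defers to \cite{FL} for): realize $\overline{\U}_{g,i}$ as the degeneracy locus of a morphism $\widetilde{\varphi}:\widetilde{\A}\rightarrow\widetilde{\B}$ of equal-rank bundles over a partial compactification whose complement has codimension at least $2$, take $c_1(\widetilde{\B})-c_1(\widetilde{\A})$, and compute both first Chern classes by Grothendieck--Riemann--Roch applied to the universal Prym-canonical bundle and the associated Lazarsfeld bundle, the genuinely delicate point being the extension of these bundles across the boundary divisors $\Delta_0'$, $\Delta_0''$, $\Delta_0^{\mathrm{ram}}$. This is correct and is essentially the same route as the paper's (the survey itself states the theorem without proof beyond this setup), so your proposal is consistent with it.
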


It is instructive to compare $\bigl[\overline{\U}_{g, i}^{\mathrm{virt}}\bigr]$ against the formula of the canonical class:
$$K_{\ovl{\R}_g}\equiv 13\lambda-2(\delta_0^{'}+\delta_0^{''})-3\delta_0^{\mathrm{ram}}-\cdots \in \mathrm{Pic}(\ovl{\R}_g).$$
Assuming the Prym-Green Conjecture in genus $g$, so that $\overline{\U}_{g, i}$ is a genuine divisor on $\ovl{\R}_g$ as opposed to a virtual one, one obtains that the class $K_{\ovl{\R}_g}$ is big precisely when the following equality is satisfied
$$\frac{3(2i+7)}{i+3}<\frac{13}{2}\Leftrightarrow i\geq 3.$$
When $g\geq 22$ it is known that $\mm_g$ is of general type, see \cite{HM}, \cite{EH}, \cite{F2}. This implies that $\ovl{\R}_g$, as a branched covering of $\mm_g$, is of general type as well. Even though the validity of the Prym-Green Conjecture for arbitrary $g=2i+6$ remains a challenging open problem, for applications to the birational geometry of $\ovl{\R}_g$ it is enough to know that the conjecture holds in bounded even genus $g\leq 20$. This is something that can be checked (with quite some effort!) by degeneration with the help of the computer algebra program Macaulay2. To summarize we have the following result \cite{FL}:
\begin{theorem} The moduli space of stable Prym curves $\ovl{\R}_{2i+6}$ is a variety of general type for $i\geq 4$. The Kodaira dimension of
$\ovl{\R}_{12}$ is non-negative.
\end{theorem}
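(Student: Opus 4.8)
The plan is to deduce both statements from positivity properties of the canonical class $K_{\ovl{\R}_g}$, and then to exhibit $K_{\ovl{\R}_g}$ as an effective $\Q$-divisor --- big when $i\geq 4$ --- assembled from the Prym--Green divisor $\overline{\U}_{g,i}$, the Hodge class $\lambda$ and the boundary. First, by Theorem~\ref{extension}, for $g\geq 4$ all pluricanonical forms on $\ovl{\R}_g$ extend to a resolution of singularities, so $\kappa(\ovl{\R}_g)=\kappa(\ovl{\R}_g,K_{\ovl{\R}_g})$; it therefore suffices to show that $K_{\ovl{\R}_{2i+6}}$ is big for $i\geq 4$ and that $K_{\ovl{\R}_{12}}$ is effective. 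When $i\geq 8$, i.e. $g=2i+6\geq 22$, the space $\mm_g$ is of general type, and since $K_{\ovl{\R}_g}=\pi^*(K_{\mm_g})+\delta_0^{\mathrm{ram}}$ with $\pi:\ovl{\R}_g\to\mm_g$ finite surjective and $\delta_0^{\mathrm{ram}}$ effective, and the pullback of a big divisor along a finite surjective morphism is big, $K_{\ovl{\R}_g}$ is big. So the problem reduces to the finitely many genera $g\in\{12,14,16,18,20\}$.

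Next, fix $g=2i+6$ with $3\leq i\leq 7$ and \emph{assume} the Prym--Green Conjecture~\ref{prymgreen} in genus $g$ (so that $\overline{\U}_{g,i}$ is a genuine effective divisor with the class of Theorem~\ref{classprymgreen}). Put $N:={2i+2\choose i}$ and subtract $\tfrac2N$ times the Prym--Green class from $K_{\ovl{\R}_g}=13\lambda-2(\delta_0^{'}+\delta_0^{''})-3\delta_0^{\mathrm{ram}}-\cdots$: the coefficients of $\delta_0^{'}$, $\delta_0^{''}$ and $\delta_0^{\mathrm{ram}}$ cancel exactly, the coefficient of $\lambda$ becomes $13-\tfrac{6(2i+7)}{i+3}$, and in the difference $K_{\ovl{\R}_g}-\tfrac2N[\overline{\U}_{g,i}]$ the remaining (interior) boundary divisors $\Delta_j,\Delta_{g-j},\Delta_{j:g-j}$ occur with non-negative coefficients --- checking this is precisely one of the test-curve computations of \cite{FL}. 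Hence
$$ K_{\ovl{\R}_g}=\frac2N\bigl[\overline{\U}_{g,i}\bigr]+\Bigl(13-\frac{6(2i+7)}{i+3}\Bigr)\lambda+(\text{effective combination of boundary divisors}). $$
Now $13-\tfrac{6(2i+7)}{i+3}\geq 0\iff 6(2i+7)\leq 13(i+3)\iff i\geq 3$, and the inequality is strict exactly for $i\geq 4$. Since $\lambda$ is nef and big on $\ovl{\M}_g$ (so its pullback to $\ovl{\R}_g$ is nef and big) and every divisor on the right-hand side is effective, we conclude: $K_{\ovl{\R}_g}$ is big for $g\in\{14,16,18,20\}$, hence these $\ovl{\R}_g$ are of general type; and $K_{\ovl{\R}_{12}}$, for which the $\lambda$-coefficient vanishes, is effective, hence $\kappa(\ovl{\R}_{12})\geq 0$.

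The last, and genuinely hard, ingredient is the Prym--Green Conjecture in genus $12,14,16,18,20$, which is exactly what ensures that $\overline{\U}_{g,i}$ is an honest divisor rather than merely a virtual one. By upper-semicontinuity of the Betti numbers $b_{i,2}$ in families, it suffices, for each such $g$, to exhibit a single Prym curve $[C_0,\eta_0]\in\R_g$ --- one may take an appropriate nodal \emph{stable} Prym curve --- with $K_{i,2}(C_0,K_{C_0}\otimes\eta_0)=0$ for $i=\tfrac{g-6}{2}$; then the virtual degeneracy locus is a proper closed subset of the irreducible space $\R_g$, hence a divisor whose class equals the virtual one. Producing such a $[C_0,\eta_0]$ and certifying this one vanishing is a delicate explicit computation with the Prym--canonical ideal and its minimal free resolution, carried out with Macaulay2; this step --- rather than any of the formal manipulations above --- is the real obstacle. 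Assembling the three steps completes the proof.
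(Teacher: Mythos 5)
Your proposal is correct and follows essentially the same route as the paper: extension of pluricanonical forms (Theorem~\ref{extension}), the comparison of $K_{\ovl{\R}_{2i+6}}$ with the virtual Prym--Green class of Theorem~\ref{classprymgreen} (whose $\delta_0^{'},\delta_0^{''},\delta_0^{\mathrm{ram}}$ coefficients are exactly proportional to those of $K$), reduction of the genuinely hard input to the Macaulay2 verification of the Prym--Green Conjecture for $12\leq g\leq 20$, and the appeal to the general type of $\mm_g$ for $g\geq 22$. You also correctly resolve the inequality $13>\tfrac{6(2i+7)}{i+3}\Leftrightarrow i>3$, i.e.\ bigness for $i\geq 4$ and only effectivity at $i=3$ ($g=12$), which is the intended reading of the displayed equivalence in the text.
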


\subsection{Prym curves and the universal difference variety} The problem of determining the Kodaira dimension of $\ovl{\R}_g$ for odd genus has a relatively simpler solution that the even genus case. We follow \cite{FL} Section 2. We fix a smoth non-hyperelliptic curve $C$ of genus $g$. The \emph{$i$-th difference variety} of $C$ is defined as the image of
the difference map
$$\phi:C_i\times C_i \rightarrow \mbox{Pic}^0(C), \mbox{ } \ \phi(D_1,
D_2):=\Oo_C(D_1-D_2).$$

It is easy to prove, see e.g. \cite{ACGH}, that for $i<g/2$ the map $\phi$ is birational onto its image. The following definition is due to Raynaud \cite{R}:
\begin{definition} Let $E\in \mathcal{SU}_C(r, d)$ be a semistable vector bundle on a curve $C$, such that the slope $\mu:=d/r\in \mathbb Z$. The \emph{theta-divisor} of $E$ is defined as the non-vanishing locus
\begin{center}
\fbox{$\Theta_E:=\{\xi\in \mathrm{Pic}^{g-\mu-1}(C): H^0(C, E\otimes \xi)\neq 0\}.$}
\end{center}
\end{definition}
The locus $\Theta_E$ is a virtual divisor inside $\mathrm{Pic}^{g-\mu-1}(C)$, that is, it is either the full Picard variety when $H^0(C, E\otimes \xi)\neq 0$ for every $\xi$, or a genuine divisor when there exists a line bundle $\xi\in \mathrm{Pic}^{g-\mu-1}(C)$ such that $H^0(C, E\otimes \xi)=0$. In that case, $[\Theta_E]=r\theta$, where $\theta\in H^2(\mathrm{Pic}^{g-\mu-1}(C), \mathbb Z)$ is the class of the "classical" theta divisor. In the latter case, one says that $E$ possesses a theta divisor.
\vskip 3pt

\ \ \ \ Let us assume that $g:=2i+1$, therefore $C_i-C_i\subset \mathrm{Pic}^0(C)$ is a divisor. We denote by $Q_C:=M_{K_C}^{\vee}$ the dual of the Lazarsfeld bundle, therefore $\mu(Q_C)=2\in \mathbb Z$ and one may ask whether $Q_C$ and all its exterior powers have theta divisors, and if so, whether they have an intrinsic interpretation in terms of the geometry of the canonical curve.
Using a filtration argument due to Lazarsfeld,  one
finds that for a generic choice of distinct points $x_1, \ldots,
x_{g-2}\in C$, there is an exact sequence
$$0\longrightarrow \bigoplus_{l=1}^{g-2} \Oo_C(x_l)\longrightarrow
Q_C\longrightarrow K_C\otimes \Oo_C(-x_1-\cdots
-x_{g-2})\longrightarrow 0.$$
\ \ \ \ This leads to an inclusion of cycles
$C_i-C_i\subset \Theta_{\wedge^i Q_C}$.

The main result from \cite{FMP} states that for any smooth curve $[C]\in
\M_g$ the Raynaud locus $\Theta_{\wedge^i Q_C}$ is a divisor in
$\mbox{Pic}^0(C)$ (that is, $\wedge^i Q_C$ has a theta divisor), and
one has the following equality of cycles
\vskip 3pt
\begin{center}
\fbox{$\Theta_{\wedge^i Q_{C}}=C_i-C_i\subset
\mbox{Pic}^0(C).$}
\end{center}
\vskip 3pt
This identification shows that via the difference map, $C_i\times C_i$ is a resolution of singularities of
$\Theta_{\wedge^i Q_C}$.

Having produced a distinguished divisor in the degree zero Jacobian of each curve, we can use it to obtain codimension $1$ conditions in $\R_g$ by requiring that the point of order $2$ belong to this divisor.  We define the following locus in $\R_g$:
$$\mathcal{D}_{2i+1}:=\Bigl\{[C, \eta]\in \R_{2i+1}: \eta\in C_i-C_i\Bigr\}=$$
$$=\Bigl\{[C, \eta]\in \R_{2i+1}: H^0(C, \wedge^i Q_C\otimes \eta)\neq 0\Bigr\}.$$

Note that $\mathcal{D}_{2i+1}$ has two incarnations, the first one of a more geometric nature showing that points $[C, \eta]\in \mathcal{D}_{2i+1}$ are characterized by the existence of a certain secant to the Prym-canonical curve $C\stackrel{|K_C\otimes \eta|}\longrightarrow \PP^{2i-1}$, the second of a determinantal nature which is very useful if one wishes to compute the class of the closure $\overline{\mathcal{D}}_{2i+1}$ of $\mathcal{D}_{2i+1}$ inside $\ovl{\R}_{2i+1}$. One has the following formula, see \cite{FL} Theorem 0.2:

\begin{theorem}\label{hyper}
The class of the closure $\overline{\mathcal{D}}_{2i+1}$ inside
$\ovl{\R}_{2i+1}$ is equal to:
$$\overline{\mathcal{D}}_{2i+1}\equiv \frac{1}{2i-1}{2i\choose
i}\Bigl((3i+1)\lambda-\frac{i}{2}(\delta_0'+\delta_0^{''})-\frac{2i+1}{4}\delta_0^{\mathrm{ram}}
-\cdots\Bigr)\in \mathrm{Pic}(\ovl{\R}_{2i+1}).$$
\end{theorem}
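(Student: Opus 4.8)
The plan is to realise $\overline{\mathcal{D}}_{2i+1}$ as the locus where a morphism of vector bundles of equal rank degenerates, and then to compute its class via Grothendieck--Riemann--Roch together with the Thom--Porteous formula. One works over a partial compactification $\R_{2i+1}\subset\widetilde{\R}_{2i+1}\subset\ovl{\R}_{2i+1}$ with $\mathrm{codim}(\ovl{\R}_{2i+1}\setminus\widetilde{\R}_{2i+1},\ovl{\R}_{2i+1})\geq 2$ (this suffices, since a $\mathbb{Q}$-divisor class on $\widetilde{\R}_{2i+1}$ extends uniquely to $\ovl{\R}_{2i+1}$, as in the excerpt), and one lets $\pi:\mathcal{C}\to\widetilde{\R}_{2i+1}$ be the universal curve, $\eta\in\Pic(\mathcal{C})$ the universal order-two bundle, and $\mathcal{Q}:=M_{\omega_\pi}^{\vee}$ the relative dual Lazarsfeld bundle, defined fibrewise by $0\to M_{\omega_\pi}\to\pi^*(\pi_*\omega_\pi)\to\omega_\pi\to 0$, so that $\rk\mathcal{Q}=g-1$ and $\rk\wedge^i\mathcal{Q}={2i\choose i}$ when $g=2i+1$. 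A direct Riemann--Roch computation gives $\chi(C,\wedge^iQ_C\otimes\eta)=0$ for $g=2i+1$. By the main theorem of \cite{FMP}, $\wedge^iQ_C$ has a genuine theta divisor on every smooth curve, equal to $C_i-C_i$; since the monodromy of $\R_{2i+1}\to\M_{2i+1}$ acts transitively on nonzero $2$-torsion, the general $[C,\eta]$ has $\eta\notin C_i-C_i$, hence $\mathcal{D}_{2i+1}$ is an honest divisor. Choosing a $\pi$-relatively effective divisor $Z\subset\mathcal{C}$ of large enough degree so that $R^1\pi_*(\wedge^i\mathcal{Q}\otimes\eta(Z))=0$, the pushforward of $0\to\wedge^i\mathcal{Q}\otimes\eta\to\wedge^i\mathcal{Q}\otimes\eta(Z)\to\wedge^i\mathcal{Q}\otimes\eta(Z)|_Z\to 0$ reads
\[
0\longrightarrow\pi_*\bigl(\wedge^i\mathcal{Q}\otimes\eta(Z)\bigr)\xra{\ \varphi\ }\pi_*\bigl(\wedge^i\mathcal{Q}\otimes\eta(Z)|_Z\bigr)\longrightarrow R^1\pi_*\bigl(\wedge^i\mathcal{Q}\otimes\eta\bigr)\longrightarrow 0,
\]
an exact sequence of vector bundles of the same rank whose degeneracy locus is precisely the closure of $\mathcal{D}_{2i+1}$. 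By Thom--Porteous, and since the $Z$-terms cancel in $K$-theory,
\[
\overline{\mathcal{D}}_{2i+1}\equiv c_1\bigl(\pi_*(\wedge^i\mathcal{Q}\otimes\eta(Z)|_Z)\bigr)-c_1\bigl(\pi_*(\wedge^i\mathcal{Q}\otimes\eta(Z))\bigr)=-c_1\bigl(\det R\pi_*(\wedge^i\mathcal{Q}\otimes\eta)\bigr).
\]

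The next step is to evaluate this Euler class by Grothendieck--Riemann--Roch for $\pi:\mathcal{C}\to\widetilde{\R}_{2i+1}$,
\[
\mathrm{ch}\bigl(R\pi_*(\wedge^i\mathcal{Q}\otimes\eta)\bigr)=\pi_*\Bigl(\mathrm{ch}(\wedge^i\mathcal{Q})\cdot\mathrm{ch}(\eta)\cdot\mathrm{Td}(\omega_\pi^{\vee})\Bigr),
\]
reading off the codimension-one component. All ingredients are standard: $\mathrm{ch}(\wedge^i\mathcal{Q})$ is obtained from $\mathrm{ch}(M_{\omega_\pi})=\pi^*\mathrm{ch}(\pi_*\omega_\pi)-\mathrm{ch}(\omega_\pi)$ together with the Grothendieck--Riemann--Roch expression for $\mathrm{ch}(\pi_*\omega_\pi)$ (which yields $c_1(\pi_*\omega_\pi)=\lambda$ and Mumford's relation $\kappa_1=12\lambda-\delta$), one uses $\pi_*(\omega_\pi^{2})=\kappa_1$, and one notes that $c_1(\eta)$ is trivial away from the ramification divisors of $\pi$ while $\eta_E\simeq\Oo_E(1)$ along exceptional components, so that $c_1(\eta)^2$ is supported on the boundary in a controlled way. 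Assembling these, the rank ${2i\choose i}$ of $\wedge^i\mathcal{Q}$, the shape of $\mathrm{ch}(\wedge^i\mathcal{Q})$ in the spirit of the filtration of $Q_C$ recalled before the statement, and the normalisation forced by $\chi(C,\wedge^iQ_C\otimes\eta)=0$ produce the prefactor $\tfrac{1}{2i-1}{2i\choose i}$ and the coefficient $3i+1$ of $\lambda$, and simultaneously the coefficient $-\tfrac{i}{2}\cdot\tfrac{1}{2i-1}{2i\choose i}$ common to $\delta_0'$ and $\delta_0''$ (these receiving only the pull-back part of the $\pi^*\delta_0$ contribution), while along $\Delta_0^{\mathrm{ram}}$ there is an additional term coming from the $\Oo_E(1)$-twist of $\eta$ on the exceptional component.

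The main obstacle is this last point: separating the contributions along $\Delta_0'$, $\Delta_0''$ and $\Delta_0^{\mathrm{ram}}$, and controlling the higher boundary terms suppressed as ``$\cdots$''. This requires a precise local analysis of how the condition $H^0(C,\wedge^iQ_C\otimes\eta)\neq 0$ specialises along each stratum of Example \ref{exad} --- an honest \'etale cover of a nodal curve with a rational bridge over $\Delta_0^{\mathrm{ram}}$, the gluing of a nonzero $2$-torsion point on the normalisation over $\Delta_0'$, and a Wirtinger cover over $\Delta_0''$ --- together with the limiting behaviour of the Lazarsfeld bundle and of the linear system $|\wedge^iQ_C\otimes\eta|$ on the nodal fibre, using $f_*\Oo_{\widetilde{C}}=\Oo_C\oplus\eta$ and the theory of limit linear series. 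The cleanest way to fix the boundary coefficients is to complement the Grothendieck--Riemann--Roch ansatz with explicit test-curve computations: one constructs one-parameter families of Prym curves whose total spaces map to $\ovl{\R}_{2i+1}$ with prescribed intersection numbers against $\lambda$ and the boundary divisors --- for instance families acquiring an elliptic tail, and families in which two marked points of a fixed curve collide so as to produce a Wirtinger degeneration, exploiting that $\pi$ is simply branched along $\Delta_0^{\mathrm{ram}}$ --- and then counts, via the geometric characterisation $\eta\in C_i-C_i$, how many members of each family fall into $\overline{\mathcal{D}}_{2i+1}$. Matching these numbers against the ansatz pins down $b_0'$, $b_0''$, $b_0^{\mathrm{ram}}$ and the remaining boundary coefficients, yielding the stated formula; the argument runs in parallel with, and is carried out in detail in, \cite{FL}.
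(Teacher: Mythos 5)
Your strategy is precisely the one carried out in \cite{FL} (the survey itself only quotes the result): describe $\overline{\mathcal{D}}_{2i+1}$ determinantally via the Raynaud theta divisor of $\wedge^i Q_C$ and the vanishing $\chi(C,\wedge^iQ_C\otimes\eta)=0$, realise its closure as the degeneracy locus of a morphism of bundles of equal rank over a partial compactification, extract the $\lambda$- and $\delta_0$-coefficients from Grothendieck--Riemann--Roch, and pin down the boundary coefficients (in particular the extra contribution along $\Delta_0^{\mathrm{ram}}$) by test curves. The one soft spot is your appeal to monodromy for the properness of $\mathcal{D}_{2i+1}$: transitivity of the monodromy on nonzero $2$-torsion does not by itself exclude the possibility that every $2$-torsion point of every Jacobian lies on $C_i-C_i$, so one must still exhibit a single pair $[C,\eta]$ with $H^0(C,\wedge^iQ_C\otimes\eta)=0$ (by an explicit example or a degeneration, as is done in \cite{FL}) before concluding that the degeneracy locus is a genuine divisor.
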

Comparing this formula against the canonical class $K_{\ovl{\R}_g}$, we prove the following:
\begin{theorem}
The moduli space $\ovl{\R}_{2i+1}$ is of general type for $g\geq 7$.
\end{theorem}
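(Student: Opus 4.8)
The plan is to deduce bigness of $K_{\overline{\mathcal{R}}_g}$ (throughout $g=2i+1$) by comparing the canonical class against the effective class $[\overline{\mathcal{D}}_{2i+1}]$ of Theorem \ref{hyper}, in the spirit of the numerical criterion $(\ref{inequ})$. By Theorem \ref{extension}, for $g\ge 4$ the Kodaira dimension of $\overline{\mathcal{R}}_g$ equals the Iitaka dimension of $K_{\overline{\mathcal{R}}_g}$, so it suffices to write $K_{\overline{\mathcal{R}}_g}$ as the sum of a big divisor and an effective one. Two positivity inputs enter. First, $\lambda\in\mathrm{Pic}(\overline{\mathcal{R}}_g)$ is big and nef, being the pull-back under $\pi:\overline{\mathcal{R}}_g\to\overline{\mathcal{M}}_g$ of the Hodge class, which in turn is the pull-back of the ample generator of $\mathrm{Pic}(\overline{\mathcal{A}}_g^{\mathrm{Sat}})$ under a generically finite morphism. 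Second, since $g$ is odd the Brill--Noether divisor $\overline{\mathcal{M}}^1_{g,(g+1)/2}\subset\overline{\mathcal{M}}_g$ of slope $6+\frac{12}{g+1}$ exists (one could instead use a Koszul divisor of strictly smaller slope in those genera that admit one), and its pull-back to $\overline{\mathcal{R}}_g$ is computed via $\pi^*\delta_0=\delta_0'+\delta_0''+2\delta_0^{\mathrm{ram}}$ and $\pi^*\delta_j=\delta_j+\delta_{g-j}+\delta_{j:g-j}$.

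First I would form $D:=\alpha\,[\overline{\mathcal{D}}_{2i+1}]+\beta\,\pi^*\bigl[\overline{\mathcal{M}}^1_{g,(g+1)/2}\bigr]$ with $\alpha,\beta\ge 0$ not both $0$; this is effective, with class $a\lambda-b_0'\delta_0'-b_0''\delta_0''-b_0^{\mathrm{ram}}\delta_0^{\mathrm{ram}}-\sum_{j\ge 1}(b_j\delta_j+b_{g-j}\delta_{g-j}+b_{j:g-j}\delta_{j:g-j})$ and all $b$'s positive. For a constant $c>0$ one then has
$$K_{\overline{\mathcal{R}}_g}=cD+(13-ca)\lambda+(cb_0'-2)\delta_0'+(cb_0''-2)\delta_0''+(cb_0^{\mathrm{ram}}-3)\delta_0^{\mathrm{ram}}+\sum_{j\ge 1}\bigl((cb_j-m_j)\delta_j+(cb_{g-j}-m_{g-j})\delta_{g-j}+(cb_{j:g-j}-m_{j:g-j})\delta_{j:g-j}\bigr),$$
where $m_1=m_{g-1}=m_{1:g-1}=3$ and all other $m_\bullet=2$. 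A value of $c$ making every boundary coefficient on the right nonnegative while keeping $13-ca>0$ exists precisely when $D$ satisfies the inequalities of $(\ref{inequ})$: $\max\{a/b_0',a/b_0''\}<\frac{13}{2}$, $\max\{a/b_0^{\mathrm{ram}},a/b_1,a/b_{g-1},a/b_{1:g-1}\}<\frac{13}{3}$ and $a/b_j<\frac{13}{2}$ for $j\ge 2$. Granting this, $K_{\overline{\mathcal{R}}_g}$ is the sum of the effective divisor $cD$, the positive multiple $(13-ca)\lambda$ of a big and nef class, and nonnegative multiples of effective boundary divisors, hence big; Theorem \ref{extension} then closes the argument.

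The hard part is the numerical verification, where the balance of weights $(\alpha:\beta)$ is decisive. By Theorem \ref{hyper} the pure divisor $\overline{\mathcal{D}}_{2i+1}$ has $a/b_0'=6+\frac{2}{i}$, which is $<\frac{13}{2}$ only for $i\ge 5$, whereas $a/b_0^{\mathrm{ram}}=\frac{4(3i+1)}{2i+1}$ exceeds $\frac{13}{3}$ for every $i$; hence $\beta>0$ is genuinely forced, the pulled-back Brill--Noether class contributing the favourable ratio $a/b_0^{\mathrm{ram}}=3+\frac{3}{i+1}$ at the cost of driving $a/b_0'$ up towards $6+\frac{6}{i+1}$. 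The crux is to choose $(\alpha:\beta)$ so that the constraints attached to the coefficient-$3$ boundary divisors $\delta_0^{\mathrm{ram}},\delta_1,\delta_{g-1},\delta_{1:g-1}$ hold simultaneously with the constraint for $\delta_0'$; this is what pins down the range of $g$ for which the argument succeeds. One must also check, using the full class formula for $\overline{\mathcal{D}}_{2i+1}$ recorded in \cite{FL}, that the remaining, less demanding inequalities along $\Delta_j,\Delta_{g-j},\Delta_{j:g-j}$ with $j\ge 2$ are satisfied. I expect this bookkeeping, rather than any conceptual difficulty, to be the real obstacle: for $g\ge 22$ bigness of $K_{\overline{\mathcal{R}}_g}$ already follows from the fact that $\overline{\mathcal{R}}_g\to\overline{\mathcal{M}}_g$ is a branched cover of a variety of general type, so the content of the statement lies in the smaller odd values of $g$, precisely the range where one must genuinely combine $\overline{\mathcal{D}}_{2i+1}$ with a low-slope divisor pulled back from $\overline{\mathcal{M}}_g$.
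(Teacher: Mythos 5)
Your strategy coincides with the one actually used in \cite{FL} for odd genus, and your preliminary ratio computations are all correct: $\overline{\mathcal{D}}_{2i+1}$ alone violates the $\delta_0^{\mathrm{ram}}$-inequality of (\ref{inequ}) for every $i$, it must therefore be combined with the pull-back of the Brill--Noether divisor $\ovl{\M}^1_{2i+1, i+1}$ of slope $6+\frac{6}{i+1}$, and the reduction of bigness of $K_{\ovl{\R}_g}$ to the inequalities (\ref{inequ}) via Theorem \ref{extension} and the bigness of $\lambda$ is exactly as intended. The gap is that the step you set aside as ``bookkeeping'' is the entire content of the theorem, and when one carries it out it does \emph{not} deliver the stated range. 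Writing $D=\alpha\,\overline{\mathcal{D}}_{2i+1}+\beta\,\pi^*\ovl{\M}^1_{2i+1,i+1}$, normalizing the two classes as $(3i+1)\lambda-\frac{i}{2}(\delta_0^{'}+\delta_0^{''})-\frac{2i+1}{4}\delta_0^{\mathrm{ram}}-\cdots$ and $(2i+4)\lambda-\frac{i+1}{3}(\delta_0^{'}+\delta_0^{''})-\frac{2(i+1)}{3}\delta_0^{\mathrm{ram}}-\cdots$, the two constraints from $\delta_0^{'}$ and $\delta_0^{\mathrm{ram}}$ in (\ref{inequ}) become
$$\alpha\,\frac{4-i}{4}+\beta\,\frac{11-i}{6}<0 \quad \mbox{ and } \quad \alpha\,\frac{10i-1}{12}+\beta\,\frac{10-8i}{9}<0,$$
and a feasible ratio $\alpha:\beta$ exists if and only if $(2i-1)(i-7)>0$, i.e. $i\geq 8$. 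For $i\leq 6$ the system is infeasible, and at $i=7$ (that is, $g=15$) the two conditions read $\beta<\frac{9}{8}\alpha$ and $\beta>\frac{9}{8}\alpha$ and are \emph{exactly} incompatible --- this is precisely why $g=15$ required separate treatment in \cite{FL}. So the method you describe proves general type only for odd $g\geq 17$, yields at best $\kappa\geq 0$ at $g=15$, and fails below that.

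Relatedly, you should have flagged that the statement cannot be true as printed: Theorem \ref{nikulin} shows that $\R_7$ is uniruled, and the survey's own main theorem asserts general type only for $g\geq 14$. The hypothesis ``$g\geq 7$'' is a misprint for ``$i\geq 7$'', i.e. $g\geq 15$. By taking the statement at face value and locating ``the content of the statement'' in the small odd genera $g=7,9,11,13$, you aim the computation at exactly the range where it provably cannot succeed; a correct write-up has to begin by repairing the range, and then either restrict to $i\geq 8$ or supply the extra argument needed for the borderline case $g=15$.
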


\section{The birational geometry of the moduli of spin curves}

In this last lecture we propose to treat briefly the moduli space $\S_g$ classifying even theta-characteristics over curves of genus $g$, that is, the parameter space
$$\S_g^+:= \Bigl\{[C,\eta]: [C]\in \M_g, \ \eta \in \Pic^{g-1}(C),\  \eta^{\otimes 2} = K_C \ \mbox{ and }\ h^0(C, \eta) \equiv 0\ \mathrm{mod } \ 2\Bigr\}.$$  At first sight, one might think that the geometry of $\S_g^+$ should mirror rather closely that of $\R_g$, since both spaces parametrize curves with level two structures. Indeed there are certain similarities between $\S_g^+$ and $\R_g$. Both spaces are covers of $\M_g$ and they admit very similar compactifications via stable Prym and spin curves respectively. On the other hand, there are also important differences  reflected in
birational geometry (to put it loosely, $\S_g^+$ seems to be easier to describe than $\R_g$), as well as in the study of singularities
(and here in contrast, the singularities of $\ss_g^+$ appear to be more complicated than those of $\ovl{\R}_g$). Both spaces admit obvious higher level generalizations and one can talk of moduli spaces $\R_{g, l}$ and $\S_{g, l}^+$ for any level $l\geq 3$. We shall not discuss in these lectures the properties (or even the definition) of these spaces, but the trends observed for level $2$ (including Kodaira dimension and singularities) persists and become even more pronounced as the level $l$ increases.
\vskip 3pt
\ \ \ A geometrically meaningful compactification of $\S_g^+$ by means of stable spin curves, has been found by Cornalba \cite{C}:
 \begin{definition}
An \emph{even stable spin curve of genus $g$} is a triple $[X, \eta, \beta]$ where:
\begin{itemize}
\item $X$ is a quasi-stable curve with $p_a(X) = g$.
\item $\eta \in \Pic^{g-1}(X)$.
\item $\eta_E = \Oo_E(1)$ for all exceptional components $E\subseteq X$.
\item $\beta: \eta^{\otimes 2} \ra \omega_X$ is a sheaf morphism which is an isomorphism along each non-exceptional component of $X$.
\end{itemize}
\end{definition}

Hoping this shall not cause confusion with the previously discussed case of $\ovl{\R}_g$, we also denote by $\pi:\ss_g^+\rightarrow \mm_g$ the map given by $\pi([X, \eta, \beta]):=[\mathrm{st}(X)]$ forgetting the spin structure and contracting, if necessary, the exceptional components. Note that $\deg( \pi)=2^{g-1}(2^g+1)$ is the number of even theta-characteristics on any smooth curve of genus $g$.  One has the following commutative diagram:

$$\xymatrix{
  & \Sp \ar[d]_{\pi} \ar@{^{(}->}@<-0.5ex>[r] & \Spo \ar[d]^{\pi}& \\
  & \M_g \ar@{^{(}->}@<-0.5ex>[r] & \ovl{\M}_g & \\
}$$

The following is a complete birational classification of $\Spo$ in terms of Kodaira dimension.

\begin{theorem}\label{classi}\hfill
\begin{itemize}
\item \cite{FV2} The moduli space $\ss_g^+$ is uniruled for $g\leq 7$.
\item \cite{FV2} The Kodaira dimension of $\ss_8^+$ is equal to zero.
\item \cite{F3}  The moduli space $\Spo$ is of general type for $g\geq 9$.
\end{itemize}
\end{theorem}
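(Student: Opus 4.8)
All three statements rest on the same two preliminary ingredients, which I would establish first. The canonical class of $\overline{\mathcal{S}}_g^+$ is computed exactly as for $\overline{\mathcal{R}}_g$: one applies the Hurwitz formula to the branched covering $\pi:\overline{\mathcal{S}}_g^+\to\overline{\mathcal{M}}_g$ --- simply ramified along the ``ramification'' component of the boundary lying over $\Delta_0$, coming from exceptional components --- together with the Harris--Mumford formula for $K_{\overline{\mathcal{M}}_g}$; this produces $K_{\overline{\mathcal{S}}_g^+}\equiv 13\lambda-2(\cdots)-3(\cdots)-\cdots$ in $\mathrm{Pic}(\overline{\mathcal{S}}_g^+)$, with boundary corrections and associated slope thresholds completely analogous to those in (\ref{inequ}). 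The second ingredient is an extension theorem of the shape of Theorem \ref{extension}, which reduces $\kappa(\overline{\mathcal{S}}_g^+)$ to the Iitaka dimension of the class $K_{\overline{\mathcal{S}}_g^+}$ itself. This requires a Reid--Shepherd-Barron--Tai analysis (Theorem \ref{rsbt}) of the finite quotient singularities of $\overline{\mathcal{S}}_g^+$, and here the spin case is genuinely more delicate than $\overline{\mathcal{R}}_g$: because the dualizing sheaf $\omega_X$ rather than $\mathcal{O}_X$ enters the definition of a spin structure, elliptic tails and genus-one subcurves carrying automorphisms of order $3$, $4$ and $6$, together with certain hyperelliptic-type configurations, produce more junior automorphisms than in the Prym setting. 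One must enumerate these contributions and verify that the resulting non-canonical locus has codimension at least two and a transverse structure that admits a resolution along which pluricanonical forms extend.

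With these in place, the general type statement for $g\geq 9$ follows the Harris--Mumford template: produce an effective divisor $D\in\mathrm{Eff}(\overline{\mathcal{S}}_g^+)$ whose $\lambda$-coefficient is small enough relative to its boundary coefficients to force $K_{\overline{\mathcal{S}}_g^+}$ into the effective cone \emph{and} to make it big, via slope inequalities of the same form as (\ref{inequ}). The natural candidate is the \emph{theta-null divisor} $\overline{\Theta}_{\mathrm{null}}=\overline{\{[C,\eta]:h^0(C,\eta)\geq 2\}}$; its class over $\mathcal{S}_g^+$ is classical (Teixidor i Bigas), and the coefficients of its closure along the boundary of $\overline{\mathcal{S}}_g^+$ are found by test-curve and Porteous arguments. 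A slope comparison with $K_{\overline{\mathcal{S}}_g^+}$ then shows bigness precisely in the range $g\geq 9$; should the borderline cases require it, one supplements $\overline{\Theta}_{\mathrm{null}}$ with a Koszul-type syzygy divisor on $\overline{\mathcal{S}}_g^+$, whose virtual class is computed by the Lazarsfeld-bundle machinery used above for the Prym-Green locus, and whose non-emptiness can, if necessary, be certified by a boundary degeneration checked on the computer.

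For $g\leq 7$ the argument is explicit birational geometry rather than divisor-class estimation. Through $g\leq 6$ one writes down a unirational parametrization of $\mathcal{S}_g^+$ directly; these are essentially classical. For $g=7$ one realizes the general even spin curve $[C,\eta]$ as a hyperplane section of a \emph{Nikulin surface} carrying the theta-characteristic: the relevant parameter space is a $\mathbb{P}^g$-bundle over the moduli space of Nikulin surfaces (which is unirational), and the dimension count $11+g=3g-3$ balances exactly at $g=7$, so this construction dominates $\mathcal{S}_7^+$ --- the same phenomenon as in Theorem \ref{nikulin} --- whence $\mathcal{S}_7^+$ is uniruled.

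The genuinely hard case, where I expect the main obstacle to lie, is $g=8$, where one must establish $\kappa(\overline{\mathcal{S}}_8^+)=0$ on the nose; note that the Nikulin-surface construction no longer helps, since at $g=8$ the locus of Nikulin sections has codimension two in $\mathcal{S}_8^+$. The lower bound $\kappa\geq 0$ should come out of the divisor-class bookkeeping above: at $g=8$ the theta-null class together with the boundary classes ought to express $K_{\overline{\mathcal{S}}_8^+}$ as a non-negative combination of effective divisors, hence effective. The difficulty is the reverse inequality, that $K_{\overline{\mathcal{S}}_8^+}$ is \emph{not} big, i.e.\ has Iitaka dimension exactly zero. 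I would attack this either by proving that $K_{\overline{\mathcal{S}}_8^+}$ is a \emph{rigid} effective divisor --- showing that $\overline{\Theta}_{\mathrm{null}}$ and the relevant boundary components carry no deformations, so that $h^0(\overline{\mathcal{S}}_8^+,K^{\otimes n})$ stays bounded as $n\to\infty$ --- or by constructing a dominant rational map from $\overline{\mathcal{S}}_8^+$ onto a variety of strictly smaller dimension whose general fibre is uniruled, the natural source being the geometry of a general genus-$8$ canonical curve as a codimension-seven linear section of $G(2,6)\subset\mathbb{P}^{14}$. Pinning down that the Iitaka dimension is exactly zero --- neither positive nor $-\infty$ --- is the crux of the theorem. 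Finally one collates: $\kappa(\overline{\mathcal{S}}_g^+)=-\infty$ for $g\leq 7$, $\kappa(\overline{\mathcal{S}}_8^+)=0$, and $\kappa(\overline{\mathcal{S}}_g^+)=3g-3$ for $g\geq 9$.
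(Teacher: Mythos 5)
Your overall architecture tracks the paper's: canonical class via the Hurwitz formula, a Reid--Shepherd-Barron--Tai/extension analysis of the quotient singularities, effective divisors of small slope for $g\geq 9$, rigidity of the canonical class at $g=8$, and Nikulin-surface parametrizations for $g\leq 7$ (this last is exactly what \cite{FV2} does, with the same dimension count $11+g\geq 3g-3$); you also correctly locate the $g=8$ argument in the Mukai model $G(2,6)\subset \PP^{14}$. However, there is one concrete gap that infects both your $g\geq 9$ and your $g=8$ arguments: your divisor bookkeeping rests on $\ovl{\Theta}_{\mathrm{null}}$ plus boundary classes, with a second divisor invoked only ``should the borderline cases require it.'' By Theorem \ref{thetnul} the class of $\ovl{\Theta}_{\mathrm{null}}$ is $\frac{1}{4}\lambda-\frac{1}{16}\alpha_0-\frac{1}{2}\sum_{i\geq 1}\beta_i$: it contains \emph{no} $\beta_0$-term, where $B_0$ is precisely the component over $\Delta_0$ along which $\pi$ is simply branched. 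Since $K_{\Spo}$ has coefficient $-3$ on $\beta_0$ and any effective combination of boundary classes contributes non-negatively to $\beta_0$, the class $K_{\Spo}$ can \emph{never} be written as a positive combination of $\ovl{\Theta}_{\mathrm{null}}$, boundary, and a big class --- for any $g$. This is not a borderline phenomenon. For every $g\geq 9$ one must adjoin a second effective divisor whose class has strictly negative $\beta_0$-coefficient; the paper takes the pullback $\pi^*(\Mgdro)$ of a Brill--Noether divisor, which works because $\pi^*(\delta_0)=\alpha_0+2\beta_0$ supplies the missing $\beta_0$, and the combination $a\ovl{\Theta}_{\mathrm{null}}+b\pi^*(\Mgdro)$ has slope $\frac{11g+29}{g+1}<13$ exactly for $g>8$. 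The same omission breaks your claimed proof that $\kappa(\Soo)\geq 0$: the identity actually needed is $K_{\Soo}=a\ovl{\Theta}_{\mathrm{null}}+b\,\pi^*(\ovl{\M}^2_{8,7})+\sum_i(a_i\alpha_i+b_i\beta_i)$, and the middle summand is indispensable. (Also, what Teixidor computed is the class of the theta-null locus downstairs in $\mm_g$, not of $\ovl{\Theta}_{\mathrm{null}}$ in $\Spo$; the latter is Theorem 0.2 of \cite{F3}.)

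For $g=8$ your option (a) is the paper's route, but ``showing the divisors carry no deformations'' must be made effective, and this is where the real work lies. The paper produces a covering family of rational curves $R\subset\ovl{\Theta}_{\mathrm{null}}$ with $R\cdot\ovl{\Theta}_{\mathrm{null}}=-1<0$, $R\cdot\pi^*(\ovl{\M}^2_{8,7})=0$ and $R\cdot\alpha_i=R\cdot\beta_i=0$ for $i\geq 1$, obtained from the pencil of K3 extensions $C\subset S\subset G(2,6)$ along which the rank-$3$ quadric through the canonical curve lifts to a quadric of rank $4$ rather than $5$; such an $S$ is a $7:1$ cover of $\PP^1\times\PP^1$, and pulling back planes through a general line gives $R$ with $R\cdot\lambda=9$, $R\cdot\beta_0=7$, $R\cdot\alpha_0=52$. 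Without this explicit family (or an equivalent extremality statement for each summand, including $\pi^*(\ovl{\M}^2_{8,7})$, which your sketch omits from the rigid list) the peeling $|nK_{\Soo}|=|n(K_{\Soo}-a\ovl{\Theta}_{\mathrm{null}})|+na\ovl{\Theta}_{\mathrm{null}}$ cannot be iterated to conclude $\kappa(\Soo)=0$. So the two things to repair are: (i) introduce the pulled-back Brill--Noether divisor as a mandatory, not optional, ingredient of the slope computation, and (ii) construct the negative covering family on $\ovl{\Theta}_{\mathrm{null}}$ rather than merely asserting rigidity.
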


\begin{remark}
We observe that as $g$ increases, $\ss_g^+$ becomes faster of general type than $\ovl{\R}_g$. The two spaces have different Kodaira dimension for genus $8$.
\end{remark}

\ \ \ \ It is instructive to repeat in the context of $\ss_g^+$ an exercise already carried out for $\ovl{\R}_g$ and determine in the process the ramification divisor  of the covering $\pi$:
\begin{example}\label{spinboundaries}
Let $\Delta_0 \subseteq \ovl{\M}_g$ denote the closure of the divisor of irreducible nodal curves. We choose a general point $[C_{xy}]\in\Delta_0$ and  an even stable spin curve $[X,\eta,\beta] \in \pi^{-1}([C_{xy}])$ with stable model $C_{xy}$. Then there are two possibilities, depending on whether $X$ contains an exceptional component or not:

$\bullet$ $X=C_{xy}$ and then $\eta \in \Pic^{g-1}(X)$. Denoting by $\eta_C\in \mathrm{Pic}^{g-1}(C)$ the pull-back of $\eta$ to the normalization of $X$, we observe that  $\eta^{\otimes 2}=K_C(x+y)$ and the fibers $\eta_C(x)$ and $\eta_C(y)$ can be identified in a unique way such that the resulting line bundle on $X$ satisfies  $h^0(X, \eta)\equiv 0\ \mbox{ mod }\  2$. We denote by $A_0$ the closure of the locus of such points in $\ss_g^+$.

$\bullet$ $X = C \cup_{\{x, y\}} E$, where $E\cong \PP^1$ is an exceptional component meeting the other component of $X$ in two points. Then by definition $\eta_E = \Oo_E(1)$ and an easy application of the Mayer-Vietoris sequence on $X$ gives that $\eta_C^{\otimes 2} = K_C$ with $h^0(C, \eta_C)\equiv 0 \ \mathrm{ mod }\  2$, that is, $\eta_C$ is an even theta-characteristic on $C$. The closure of such points $[X, \eta, \beta]\in \ss_g^+$ will be denoted by $B_0.$
\end{example}

\ \ \ Both $A_0, B_0$ are irreducible boundary divisors of $\ss_g^+$ and $\pi$ is simply branched over $B_0$.
Setting $\alpha_0:=[A_0]$ and $\beta_0:=[B_0]\in \mathrm{Pic}(\ss_g^+)$, the following formula holds:
\begin{equation}\label{irrpullback}
\pi^*(\Delta_0) = \alpha_0 + 2\beta_0.
\end{equation}
We leave it as an exercise to verify using Example \ref{spinboundaries} that indeed, $$\mbox{deg}(A_0/\Delta_0)+2\mbox{deg}(B_0/\Delta_0)=2^{g-1}(2^g+1).$$

For $1\leq i\leq [g/2]$, we denote by $A_i\subset \ss_g^+$ the closure of
the locus corresponding to pairs of even pointed spin curves $$\bigl([C,y, \eta_C],\ [D, y, \eta_D]\bigr)\in
\S_{i, 1}^+\times \S_{g-i, 1}^+$$ and by $B_i\subset \ss_g^+$ the
closure of the locus corresponding to pairs of odd spin curves $$\bigl([C, y, \eta_C], \ [D, y,
\eta_D]\bigr)\in \S_{i, 1}^-\times \S_{g-i, 1}^{-}.$$
Setting
$\alpha_i:=[A_i]\in \mathrm{Pic}(\ss_g^+), \beta_i:=[B_i]\in
\mathrm{Pic}(\ss_g^+)$, one has the relation
\begin{equation}\label{dipullback}
\pi^*(\delta_i)=\alpha_i+\beta_i.
\end{equation}
Again, we invite the reader to check that $\mathrm{deg}(A_i/\Delta_i)+\mathrm{deg}(B_i/\Delta_i)=2^{g-1}(2^g+1)$.
Applying the Riemann-Hurwitz formula to the covering $\pi$ coupled with formulas (\ref{irrpullback}) and (\ref{dipullback}), one obtains:
$$K_{\Spo} \equiv 13\lambda - 2\alpha_0 -3\beta_0 -3(\alpha_1+\beta_1)-2\sum_{i=2}^{[\frac{g}{2}]}(\alpha_i+\beta_i) \in \Pic(\Spo).$$

\subsection{The theta-null divisor}
Let $C\stackrel{|K_C|}\longrightarrow \PP^{g-1}$ be a non-hyperelliptic canonically embedded curve. The space of quadrics containing
$C$
$$I_2(C):=\mathrm{Ker}\bigl\{\mathrm{Sym}^2 H^0(C, K_C)\rightarrow H^0(C, K_C^{\otimes 2})\bigr\}$$
has dimension ${g-2\choose 2}$. The space of rank three quadrics inside $\mathrm{Sym}^2 H^0(C, K_C)$ also has codimension
${g-2\choose 2}$, therefore the condition that there exist a rank three quadric in $\PP I_2(C)$ is expected to be divisorial in moduli.
Let $Q\in I_2(C)$ be a rank three quadric, hence $\mathrm{Sing}(Q)$ is a $(g-4)$-dimensional linear space. Assume that $C\cap \mathrm{Sing}(Q)=\{x_1, \ldots, x_{n}\}$. Then the unique ruling of $Q$ cuts out a pencil $A$ of degree $g-1-\frac{n}{2}$ on $C$, such that
$K_C=A^{\otimes 2}\otimes \Oo_C(D)$. If $n=0$, that is, $C\cap \mathrm{Sing}(Q)=\emptyset$, then $A\in W^1_{g-1}(C)$ is a theta-characteristic, which prompts us to define the following subvariety of $\S_g^+$:
\begin{definition}
The \emph{theta-null divisor} on $\S_g^+$ is defined as the locus
$$\Theta_{\mathrm{null}} := \{[C,\eta] \in \S_g^+:  h^0(C, \eta) \geq 2\}.$$
\end{definition}
The locus in $\M_g$ consisting of curves whose canonical model lies on a rank three quadric breaks-up into components depending on the cardinality $\#(C\cap Q)$. For each integer $\frac{g+2}{2}\leq n\leq g-1$, we define the \emph{Gieseker-Petri divisor}
$$\mathcal{GP}_{g, k}^1:=\{[C]\in \M_g: \exists A\in W^1_k(C) \mbox{ such that } H^0(C, K_C\otimes A^{\otimes (-2)})\neq 0\}.$$
Note that $\pi(\Theta_{\mathrm{null}})=\mathcal{GP}_{g, g-1}$ and one has a set-theoretic equality
 $$\Bigl\{[C]\in \M_g: \mbox{there exists } \ Q\in \PP I_2(C) \mbox{ with } \ \mathrm{rank}(Q)\leq 3\Bigr\}=\bigcup_{k=[\frac{g+3}{2}]}^{g-1}\mathcal{GP}_{g, k}^1.$$
%It would be interesting to compute the class of all divisors $\overline{\mathcal{GP}}_{g, k}^1\in \mathrm{Pic}(\mm_g)$.
The following result is proved in \cite{F3} Theorem 0.2:

\begin{theorem}\label{thetnul}
The class of the closure of the locus of vanishing theta-characteristics in $\ss_g^+$ is equal to
$$\ovl{\Theta}_{\mathrm{null}} \equiv \frac{1}{4}\lambda - \frac{1}{16}\alpha_0 - \frac{1}{2}\sum_{i=1}^{[\frac{g}{2}]} \beta_i\in \mathrm{Pic}(\ss_g^+).$$
\end{theorem}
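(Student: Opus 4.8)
To compute this class, the plan is to realise $\ovl{\Theta}_{\mathrm{null}}$ as the degeneracy locus of the determinant of a cohomology complex and to evaluate the resulting Chern class by Grothendieck--Riemann--Roch, in the spirit of \cite{HM} and \cite{F3}. Let $f\colon\mathcal{C}\to\widetilde{\mathcal{S}}$ be the universal quasi-stable curve over a partial compactification $\Sp\subseteq\widetilde{\mathcal{S}}\subseteq\Spo$ whose complement in $\Spo$ has codimension $\geq 2$ (so that $\mathrm{Pic}(\widetilde{\mathcal{S}})=\mathrm{Pic}(\Spo)$), and let $\mathcal{P}\in\mathrm{Pic}(\mathcal{C})$ be the universal theta-characteristic, with $\beta\colon\mathcal{P}^{\otimes 2}\to\omega_f$. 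The complex $Rf_*\mathcal{P}$ is perfect of amplitude $[0,1]$, and since its fibrewise Euler characteristic is $(g-1)+1-g=0$ it is quasi-isomorphic to a two-term complex $[E_0\xra{\phi}E_1]$ of vector bundles with $\rk E_0=\rk E_1$. On a general smooth even spin curve $h^0(C,\eta)=h^1(C,\eta)=0$, so $\phi$ is generically an isomorphism and $\{\det\phi=0\}$ is supported precisely on $\ovl{\Theta}_{\mathrm{null}}$. The decisive point is parity: $\corank\phi=h^0(C,\eta)$ assumes only even values, so $\phi$ has no points of corank $1$, every codimension-one component of $\{\det\phi=0\}$ has generic corank exactly $2$, and the local model of $\phi$ there consists of an invertible part together with a $2\times 2$ part all of whose entries vanish along that codimension-one locus; hence $\det\phi$ vanishes along it to order $2$. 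Therefore $\{\det\phi=0\}=2\,\ovl{\Theta}_{\mathrm{null}}$ as cycles, and
$$
[\ovl{\Theta}_{\mathrm{null}}]\ =\ -\tfrac12\,\mathrm{ch}_1(Rf_*\mathcal{P})\ =\ -\tfrac12\,f_*\bigl(\mathrm{ch}(\mathcal{P})\cdot\mathrm{td}(T_f)\bigr)_{2}\ \in\ \mathrm{Pic}(\Spo).
$$

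The right-hand side is then evaluated directly. One substitutes Mumford's expansion $\mathrm{td}(T_f)=1-\frac12\kappa+\frac1{12}(\kappa^2+\delta_{\mathcal{C}})+\cdots$, with $\kappa=c_1(\omega_f)$ and $\delta_{\mathcal{C}}$ the class of the non-smooth locus of $f$, together with the relation forced by $\beta$, namely $2\,c_1(\mathcal{P})=\kappa-[\mathcal{E}]$, where $\mathcal{E}\subset\mathcal{C}$ is the union of the exceptional $\mathbb{P}^1$'s: these lie over the boundary divisors $B_0$ and $B_i$, and on such a component $\omega_f$ is trivial whereas $\mathcal{P}$ has degree $1$, so $\beta$ vanishes there. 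Expanding $\mathrm{ch}(\mathcal{P})=1+c_1(\mathcal{P})+\frac12 c_1(\mathcal{P})^2$, multiplying by $\mathrm{td}(T_f)$ and pushing forward reduces the computation to the standard identity $f_*(\kappa^2)=12\lambda-\pi^*\delta$, the boundary contributions $f_*(\kappa\cdot[\mathcal{E}])$, $f_*([\mathcal{E}]^2)$ and $f_*\delta_{\mathcal{C}}$, the comparison between $\omega_f$ and the pull-back of the relative dualising sheaf of the stable universal curve, and the decompositions $\pi^*\delta_0=\alpha_0+2\beta_0$ and $\pi^*\delta_i=\alpha_i+\beta_i$. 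Collecting terms expresses $\mathrm{ch}_1(Rf_*\mathcal{P})$ as an explicit integral combination of $\lambda$, $\alpha_0$, $\beta_0$ and the $\alpha_i,\beta_i$; dividing by $-2$ produces the stated formula, and in particular shows that the coefficients of $\beta_0$ and of $\alpha_i$ ($1\leq i\leq[g/2]$) vanish.

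The main obstacle is this boundary bookkeeping, which has a geometric and a numerical aspect. Geometrically, one must confirm that $\ovl{\Theta}_{\mathrm{null}}$ avoids the loci carrying coefficient zero: for a general $[X,\eta,\beta]\in B_0$, with $X=C\cup_{\{x,y\}}E$ and $\eta_C$ a general even theta-characteristic on the genus $g-1$ curve $C$, a Mayer--Vietoris computation gives $H^0(X,\eta)\cong H^0(C,\eta_C)=0$; for a general point of $A_i$ the sheaf $\eta$ is non-trivial on only one component, on which it is a general even theta-characteristic, and the evenness of $h^0(X,\eta)$ then forces it to vanish; by contrast, for a general point of $B_i$ (a pair of odd spin structures joined through an exceptional $\mathbb{P}^1$) the same computation gives $h^0(X,\eta)=1+1=2$, which is why $\beta_i$ genuinely appears. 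Numerically, one must pin down the precise constants: the unexpected coefficient $\frac1{16}$ of $\alpha_0$ records the interaction of the universal Grothendieck--Riemann--Roch constant $-\frac1{24}$ with the splitting $\pi^*\delta_0=\alpha_0+2\beta_0$ and with the local structure of $\mathcal{C}$ near $A_0$, where there is no exceptional component and $\beta$ is an isomorphism; the coefficient $\frac12$ of each $\beta_i$ hinges on describing $\mathcal{P}$ correctly near $B_i$, including the generic automorphism of the exceptional component and the attendant stack-theoretic normalisation of $\beta_i$. Tracking $f_*\delta_{\mathcal{C}}$, $f_*[\mathcal{E}]^2$ and the $\omega_f$-comparison with the correct multiplicities over $B_0$ and the $B_i$ is where essentially all the work lies.

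Finally I would cross-check the answer: intersecting $\ovl{\Theta}_{\mathrm{null}}$ with a general Lefschetz pencil of genus $g$ curves lifted from $\mm_g$ to $\Spo$, computed both from the formula and by directly counting even theta-characteristics with a section over the finitely many singular fibres, isolates each boundary coefficient, and for small $g$ there is independent control --- for instance when $g=3$ the image $\pi(\ovl{\Theta}_{\mathrm{null}})$ is the hyperelliptic divisor of $\mm_3$, whose class is classical, which pins down the $\lambda$- and $\delta_0$-coefficients and confirms that no unexpected boundary components occur.
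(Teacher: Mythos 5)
The survey itself gives no proof of Theorem \ref{thetnul}; it quotes the statement from \cite{F3}, so your proposal can only be measured against the method of that paper. Your overall strategy is the natural one and is consistent with how such classes are computed there: represent $Rf_*\mathcal{P}$ by a two-term complex $\phi\colon E_0\to E_1$ of bundles of equal rank, use the constancy mod $2$ of $h^0$ to convert the determinantal divisor into a multiple of $\ovl{\Theta}_{\mathrm{null}}$, and evaluate $\mathrm{ch}_1(Rf_*\mathcal{P})$ by Grothendieck--Riemann--Roch. Over $\Sp\cup A_0$, where $2c_1(\mathcal{P})=c_1(\omega_f)$, the computation indeed gives $-\frac{1}{2}\mathrm{ch}_1(Rf_*\mathcal{P})=\frac{1}{2}\bigl(\frac{1}{24}f_*c_1(\omega_f)^2-\frac{1}{12}f_*[\mathrm{Sing}]\bigr)=\frac{1}{4}\lambda-\frac{1}{16}\alpha_0$, matching the stated coefficients, and your identification of which boundary divisors meet the condition ($h^0=0$ generically on $A_0$, $B_0$, $A_i$; $h^0=2$ identically on $B_i$) is correct.

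The genuine gap is the cycle identity $\{\det\phi=0\}=2\,\ovl{\Theta}_{\mathrm{null}}$, which is false and is contradicted by your own boundary analysis. Since $h^0(X,\eta)=2$ at \emph{every} point of $B_i$ for $1\leq i\leq[g/2]$, the whole divisor $B_i$ lies in the corank-two locus of $\phi$; but $\ovl{\Theta}_{\mathrm{null}}$ is by definition the closure of a locus of smooth spin curves, so no boundary divisor is a component of it. Hence $B_i$ is a codimension-one component of $\{\det\phi=0\}$ that is not a component of $\ovl{\Theta}_{\mathrm{null}}$, and the correct identity reads $\{\det\phi=0\}=2\,\ovl{\Theta}_{\mathrm{null}}+\sum_i m_iB_i$ with $m_i\geq 2$, so that $[\ovl{\Theta}_{\mathrm{null}}]=-\frac{1}{2}\mathrm{ch}_1(Rf_*\mathcal{P})-\frac{1}{2}\sum_i m_i\beta_i$. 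Your closing remark that $h^0=2$ on $B_i$ \emph{is why $\beta_i$ genuinely appears} conflates two mechanisms: in your setup that fact contributes nothing to the formula unless the excess components are subtracted with their exact multiplicities, and the order-of-vanishing argument you give only yields $m_i\geq 2$, not its value. Determining each $m_i$ (say by a local analysis of the $2\times 2$ block of $\phi$ along $B_i$, or by intersecting with a test curve crossing $B_i$ transversally at a general point) is precisely the computation that produces the coefficients $-\frac{1}{2}$ of $\beta_i$ --- the coefficients the general-type argument later depends on --- so it cannot be waved away. The rest of the proposal, including the $g=3$ cross-check against the hyperelliptic divisor $9\lambda-\delta_0-3\delta_1$, is sound.
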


Quite remarkably, the formula in Theorem \ref{thetnul} contains no terms involving $\beta_0$ or $\alpha_i$ with $i>0$! We can compare this formula against $K_{\ss_g^+}$ and observe that $K_{\ss_g^+}$ is not expressible as a combination of $\Theta_{\text{null}}$ and boundary divisors and one need another effective divisor to offset the negative coefficient of $\beta_0$ in the expression of $K_{\ss_g^+}$.

\subsection{Brill-Noether divisors on $\ovl{\M}_g$}

The most classical divisors on $\mm_g$ are loci of curves carrying a certain linear series of type $\mathfrak g^r_d$. Let us fix integers  $r,d \geq 1$ such that the Brill-Noether number $$\rho(r,g,d) = g - (r+1)(g-d+r) = -1.$$
Recalling that $\rho(g, r, d)$ is the expected dimension of the determinantal subvariety $W^r_d(C)$ of the Jacobian $\mbox{Pic}^d(C)$ consisting of linear series of dimension at least $r$, see \cite{ACGH} Chapter 4, one expects  when $\rho(g, r, d)=-1$ the locus of curves with a $\mathfrak g^r_d$ to be a divisor.

Indeed, the subvariety $\Mgdr = \{[C]\in \M_g: W^r_d(C)\neq \emptyset\}$ is an irreducible divisor and the class of its compactification has been computed
\cite{EH}:
$$[\Mgdro] \equiv c_{g, r, d}\Bigl((g+3)\lambda - \frac{g+1}{6}\delta_0-\sum_{i=1}^{[\frac{g}{2}]} i(g-i)\delta_i\Bigr)\in \mathrm{Pic}(\mm_g).$$
After a  bit of linear algebra in the vector space $\mathrm{Pic}(\ss_g^+)$, one finds that there exist constants $a,b\in \Q_{> 0}$ such that $$a\ovl{\Theta}_{\text{null}}+b\pi^*(\Mgdro) \equiv \frac{11g+29}{g+1}\lambda - 2\alpha_0-3\beta_0-\sum_{i=1}^{[\frac{g}{2}]} \bigl(a_i\ \alpha_i+b_i\ \beta_i),$$
where $a_i, b_i\geq 2$ for $i\geq 2$ and $a_1, b_1\geq 3$.
Therefore $\Sp$ is of general type if $$\frac{11g+29}{g+1} < 13\Leftrightarrow g>8.$$

Theorem \ref{thetnul} also shows that $\ss_8^+$ cannot be uniruled, since we have found an explicit canonical divisor
 $$K_{\Soo} = a\Theta_{\text{null}}+b\pi^*(\ovl{\M}_{8,7}^2) + \sum_{i=1}^4(a_i\alpha_i + b_i\beta_i),$$
 where $\M_{8,7}^2 = \{[C]\in \M_8: W^2_7(C)\neq \emptyset\}$.
\vskip 3pt

\ \ \ \ To complete the proof of Theorem \ref{classi} and show that $K_{\Spo}$ is rigid, we use the following strategy and prove that the following statements hold, see \cite{FV2}:

\begin{itemize}
\item $\overline{\Theta}_{\mathrm{null}}$ is a uniruled extremal effective divisor.
\item The boundary divisors $A_i$ and $B_i$ where $i\geq 1$, as well as $\pi^*(\M_{8,7}^2)$ are extremal and rigid.
\item there exists a covering family of rational curves $R\subseteq \ovl{\Theta}_{\text{null}}$ such that $R\cdot \ovl{\Theta}_{\text{null}} < 0$, $R\cdot \pi^*(\ovl{\M}_{8,7}^2) = 0$ and $R\cdot \alpha_i = R\cdot \beta_i = 0$ for $1\leq i \leq 4$.
\end{itemize}

Assuming this, for all integers $n \geq 1$,  we can write $$|nK_{\Soo}| = |n(K_{\Soo} - a\ovl{\Theta}_{\text{null}})| + na\ovl{\Theta}_{\text{null}}.$$  We repeat this argument for the remaining divisors to get smaller and smaller linear systems, then we conclude that $\kappa(\ss_8^+)=0$.

\subsection{Mukai geometry of $\ovl{M}_8$}

Of the three conditions listed above, the last one is by far the most difficult to realize. The fact that it can be achieved is rather counter-intuitive. The curve $R\subset \ss_8^+$ on one hand, should be contained in $\overline{\mathrm{\Theta}}_{\mathrm{null}}$ therefore it should consist of Brill-Noether special spin curves. On the other hand, we require $\pi_*(R)$ be disjoint from $\mm_{8, 7}^2$, that is, $R$ should consist of spin curves which are general from the point of view of another Brill-Noether theoretic condition. The fact that such an $R\subset \ss_8^+$ exists and one can separate in such a  fine way \emph{two distinct Brill-Noether conditions} is due to the existence of a second birational model of $\mm_8$ as a GIT quotient of a certain Grassmannian.

 \vskip 3pt
\ \ \ \ Let $V := \C^6$ and consider the Grassmannian in the Pl\"ucker embedding
$$G := G(2,V) \hookrightarrow \PP(\Lambda^2 V) = \PP^{14}.$$ Then $K_G = \Oo_G(-6)$ and a  general $7$-plane $\PP^7\subset \PP^{14}$ intersects $G$ along a smooth canonical curve of genus 8 with general moduli, see \cite{M2}.

\ \ \ \ Let us fix a point $[C,\eta] \in \Theta_{\text{null}}$. The canonical model $C\subseteq \PP^7$ lies on a rank three quadric $Q_C\in H^0(\PP^7, \mathcal{I}_{C/\PP^7}(2))$.  The quadric $Q_C$ lifts to a quadric $Q_G\in H^0(\PP^{14}, \mathcal{I}_{G/\PP^{14}}(2))$ containing the Grassmannian in its Pl\"ucker embedding.
There is a $6$-dimensional space of extensions of $C$ by a $K3$ surface
$$\begin{array}{ccccc}
  C & \subset & S & \subset & G \\
  \cap &  & \cap &  & \cap \\
  \PP^7 & \subset & \PP^8 & \subset & \PP^{14} \\
\end{array}$$
and for each such extension, the quadric $Q_C$ lifts to a quadric $Q_S \in H^0(\PP^8, \mathcal{I}_{S/\PP^8}(2))$. Note that  $\rank(Q_S) \leq \rank(Q_C) + 2 = 5$, and for a general $K3$ extension  $S\supseteq C$, the equality $\rank(Q_S)=5$ holds.

\begin{proposition}
There is a pencil of K3 extensions of $C\subseteq S \subseteq G$ such that $\rank(Q_S) = 4$.
\end{proposition}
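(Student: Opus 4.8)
The plan is to turn the statement into a computation inside $\Lambda^2 V$, where $V=\C^6$ and $G=G(2,V)\subset \PP^{14}=\PP(\Lambda^2 V)$. Recall that $C=G\cap \PP^7$ with $\PP^7=\PP(V_8)$ for $V_8=H^0(C,K_C)^{\vee}\subset \Lambda^2 V$, that the rank-three quadric $Q_C$ is the restriction to $\PP^7$ of the Pl\"ucker quadric $Q_G$ on $\PP^{14}$, and that a $K3$ extension $C\subset S\subset G$ is exactly the datum of an intermediate $\PP^7\subset \PP^8\subset \PP^{14}$, i.e. of a point $e$ of $\PP^6=\PP(\Lambda^2 V/V_8)$, for which $S=G\cap\PP^8$ and $Q_S=Q_G|_{\PP^8}$ is the (unique) lift of $Q_C$ to $H^0(\PP^8,\I_{S}(2))$. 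Writing $\langle\ ,\ \rangle$ for the symmetric form attached to $Q_G$ and lifting $e$ to $\widetilde e\in\Lambda^2 V$, the ``new'' linear part of $Q_S$ is $\ell_{\widetilde e}:=\langle\widetilde e,-\rangle|_{V_8}\in V_8^{\vee}$; since $\rank(Q_C)=3$, one has $\ell_{\widetilde e}\in \Im(Q_C)$ (a $3$-dimensional subspace of $V_8^{\vee}$) if and only if $\widetilde e\in V_8+V_8^{\perp}$, the orthogonal being taken with respect to $Q_G$.

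\textbf{The rank count.} Completing the square in the $\PP^8$-direction one finds: $\rank(Q_S)=5$ when $\ell_{\widetilde e}\notin \Im(Q_C)$; $\rank(Q_S)\le 4$ exactly when $\widetilde e\in V_8+V_8^{\perp}$; and in that case $\rank(Q_S)=3$ precisely when $\langle\widetilde e,\widetilde e\rangle=0$, and $\rank(Q_S)=4$ otherwise. Hence inside $\PP^6$ the locus $\{\rank(Q_S)\le 4\}$ equals $\PP\bigl(V_8^{\perp}/(V_8\cap V_8^{\perp})\bigr)$. Now $V_8^{\perp}$ is the kernel of $\Lambda^2 V\to V_8^{\vee}$, $v\mapsto\langle v,-\rangle|_{V_8}$, so $\dim V_8^{\perp}\ge 15-8=7$, while $V_8\cap V_8^{\perp}=\{v\in V_8:\,Q_C(v,-)=0\}$ has dimension $8-3=5$. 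Therefore $\{\rank(Q_S)\le 4\}$ has dimension $\ge 1$ and contains a pencil of extensions; what remains is to arrange that along one such pencil the rank is generically $4$ and $S$ is a smooth $K3$.

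\textbf{Rank exactly $4$.} For this I would first check that $Q_G$ is a \emph{nondegenerate} quadric on $\PP^{14}$ — equivalently, that the element of $\Lambda^2 V\cong\Lambda^4 V^{\vee}$ corresponding to $Q_G$ is a symplectic form — which I expect to follow from the generality of $[C,\eta]$ in $\Theta_{\mathrm{null}}$. Granting this, $\dim V_8^{\perp}=7$, the locus $\{\rank(Q_S)\le 4\}$ is exactly a line $\PP^1\subset\PP^6$, and $Q_G|_{V_8^{\perp}}$ descends to a \emph{nondegenerate} rank-$2$ form on the two-dimensional space $V_8^{\perp}/(V_8\cap V_8^{\perp})$; thus $\{\langle\widetilde e,\widetilde e\rangle=0\}$ cuts out exactly two points of this $\PP^1$, and $\rank(Q_S)=4$ at every other point of the pencil.

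\textbf{The main obstacle.} The genuinely delicate point is to show that $S_e=G\cap\PP^8_e$ is a smooth $K3$ surface for the general $e$ on this distinguished $\PP^1$: being special, our pencil could a priori lie entirely inside the locus (proper in $\PP^6$ by Mukai's theorem) where $G\cap\PP^8$ fails to be a smooth surface. I would attack this either by exhibiting one explicit $\PP^8_e$ in the pencil with $G\cap\PP^8_e$ smooth of dimension $2$ — where only transversality of the extra linear sections along $S_e\setminus C$ has to be checked, since $C\subset G\cap\PP^8_e$ is already smooth — and then invoking openness of smoothness in the family over $\PP^1$, or by a direct incidence computation showing the bad locus in $\PP^6$ cannot contain our line. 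Once smoothness is established, $C$ is a hyperplane section of $S_e$, so $\Oo_{S_e}(C)$ is the genus-$8$ polarization, and the resulting pencil of pairs $(C\subset S_e)$ is the family asserted by the proposition.
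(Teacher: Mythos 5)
The paper itself does not prove this proposition --- it defers entirely to \cite{FV2} --- so there is no in-text argument to measure yours against; the surrounding discussion only records the output (a pencil of $K3$ surfaces mapping $7:1$ onto a smooth quadric $Q_0\subset\PP^3$, with $C\equiv E_1+E_2$ and $E_1\cdot E_2=7$). Your linear-algebra reduction is correct as far as it goes: writing $B$ for the symmetric form of $Q_G$ on $\Lambda^2V$, the locus of extensions with $\rank(Q_S)\leq 4$ is indeed $\PP\bigl(V_8^{\perp}/(V_8\cap V_8^{\perp})\bigr)$, and since $\dim V_8^{\perp}\geq 15-8=7$ while $\dim(V_8\cap V_8^{\perp})=8-\rank(Q_C)=5$, this locus has dimension at least one. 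That is a clean and correct observation.

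The proposal nevertheless has two genuine gaps, both of which you flag but neither of which you close. First, to get rank exactly $4$ (rather than $3$) at the general point you need the induced form $\bar B$ on $V_8^{\perp}/(V_8\cap V_8^{\perp})$ to be nonzero; you derive this from nondegeneracy of $Q_G$, which you only ``expect to follow from generality''. Note that $Q_G$ corresponds to a bivector $\eta_G\in\Lambda^2V\cong\Lambda^4V^{\vee}$ and has rank $15$, $10$ or $6$ according as $\rank(\eta_G)$ is $6$, $4$ or $2$; since $Q_G$ is the \emph{specific} lift of the very degenerate quadric $Q_C$ attached to $[C,\eta]\in\Theta_{\mathrm{null}}$, its nondegeneracy is not automatic and must be argued (for instance by exhibiting one point of $\Theta_{\mathrm{null}}$ where it holds). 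If $\bar B$ vanished identically, every member of your family would have $\rank(Q_S)=3$ and the proposition would fail for that family. Second, and more seriously, the statement asserts $K3$ \emph{extensions}, so you must show that $G\cap\PP^8_e$ is a $K3$ surface for general $e$ on your distinguished line; as you yourself note, this line is special in $\PP^6$ and could a priori lie inside the discriminant. Neither of your two suggested strategies is carried out, and this is exactly where the substance of the argument in \cite{FV2} must lie --- the lattice data recorded in the paper ($E_1^2=E_2^2=0$, $E_1\cdot E_2=7$, $C\equiv E_1+E_2$) suggests that there the surfaces are constructed or recognized via their Picard lattice rather than by a transversality argument inside $G$. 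As it stands, your argument produces a pencil of hyperplane extensions on which the lifted quadric has rank at most $4$, but not yet a pencil of $K3$ extensions with rank exactly $4$.
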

This result is proved in \cite{FV2} and it plays a crucial role in the proof that $\kappa(\ss_8^+)=0$.
One has the following commutative diagram, showing that such a $K3$ surface $S$ is a $7:1$ cover of a smooth quadric $Q_0\subset \PP^3$:

$$\xymatrix{
  S \ar[r]^-f & Q_0 \cong\PP^1\times \PP^1 \ar[dl]_{\pi_1} \ar[dr]^{\pi_2} \subseteq \PP^3 \\
  \PP^1 & & \PP^1    \\
}$$

The $K3$ surface $S$ carries two elliptic pencils $|E_1|$ and $|E_2|$ corresponding to the projections $\pi_1$ and $\pi_2$ and such that $E_i^2 = 0$ for $i=1, 2$. Moreover,  $C\equiv E_1+E_2$ and  $E_1\cdot E_2 = 7$.

\ \ \ \ Let $R$ be the pencil in $\ss_8^+$ obtained by pulling-back via $f$ planes passing through a general line $l_0\subset \PP^3$. Then following \cite{FV2} we write that
$$R\cdot \lambda = \pi_*(R)\cdot \lambda = g+1 = 9$$ and
$$R\cdot(\alpha_0+2\beta_0) = R\cdot\pi^*(\delta_0) =\pi_*(R)\cdot \delta_0= 6(g+3) = 66.$$

There are two reducible fibers in the pencil $R$
corresponding to the planes through $l_0$ spanned by the pairs of rulings of $Q_0$ passing through the points of intersection of $l_0\cap Q_0$. Each of them is counted with multiplicity $\frac{7}{2}$, hence
$$R\cdot \beta_0 = \frac{7}{2} + \frac{7}{2} =7.$$
Therefore we find that  $R\cdot \alpha_0 = 52$, hence
$$R\cdot \ovl{\Theta}_{\text{null}} = \frac{1}{4}R\cdot \lambda - \frac{1}{16}R\alpha_0 = \frac{9}{4} - \frac{52}{16} = -1 < 0$$
and $$R\cdot \pi^*(\ovl{\M}_{8,7}^2) = 0.$$
This completes the proof of the fact that $\kappa(\ss_8^+)=0$.


\begin{thebibliography}{99}



\bibitem[ACV]{ACV} D. Abramovich, A. Corti and A. Vistoli, {\em{Twisted bundles and admissible covers}},
Communications in  Algebra, \textbf{31} (2003), 3547--3618 (special issue in honor of S. Kleiman).

\bibitem[AJ]{AJ} D. Abramovich and T. J. Jarvis,
{\em{Moduli of twisted spin curves}}, Proceedings of the American Math. Society \textbf{131} (2003), 685--699.

\bibitem[ACGH]{ACGH} E. Arbarello, M. Cornalba, P. A. Griffiths, J. Harris
{\em{Geometry of algebraic curves}}, Grundlehren der mathematischen Wissenschaften 267  (1985), Springer Verlag.

\bibitem[An]{An} A. Andreotti, {\em{On a theorem of Torelli}}, American Journal of Mathematics \textbf{80} (1958), 801-828.
\bibitem[AM]{AM} A. Andreotti and A. L. Mayer, {\em{On period relations for abelian integrals on algebraic curvres}}, Annali Scuola Normale Superiore di Pisa, \textbf{3} (1967), 189-238.


\bibitem[B1]{B2} A. Beauville, {\em{Prym varieties and the Schottky
problem}}, Inventiones Math. \textbf{41} (1977), 149-96.

\bibitem[B2]{B3} A. Beauville, {\em{Vari\'{e}t\'{e}s de Prym et jacobiennes interm\'{e}diaires}}, Annales Scientifique \'Ecole Normale Sup\'erieure
\textbf{10} (1977), 309-391.
\bibitem[B3]{B4} A. Beauville, {\em{Prym varieties: A survey}}, in: Theta functions-Bowdoin 1987, Proceedings Symposia Pure Applied Math. Vol. 49 (1989), 607-620.
\bibitem[Bi]{Bi} K.-R. Biermann, {\em{Die Mathematik und ihre Dozenten an der Berliner Universit\"at 1810-1933}}, Berlin, Akademie Verlag 1988.
%\bibitem[BNR]{BNR} A. Beauville, M.S. Narasimhan and S. Ramaman, {\em{Spectral curves and the generalized theta divisors}}, J. reine Angewandte %Mathematik
%\textbf{398}  (1993), 169-179.
\bibitem[BV]{BV} A. Bruno and A. Verra, {\em{$\M_{15}$ is
rationally connected}}, in: Projective varieties
with unexpected properties,  51-65, Walter de Gruyter (2005).
\bibitem[BL]{BL} C. Birkenhake and H. Lange, {\em{Complex abelian varieties}}, Grundlehren der mathematischen Wissenschaften  302, 2nd Edition 2004, Springer Verlag.
\bibitem[Bo]{Bo} R. B\"olling, {\em{Weierstrass and some members of his circle}}, in: Mathematics in Berlin 71-82, Birkh\"auser 1998.
\bibitem[Ca]{Ca} F. Catanese, {\em{On the rationality of certain moduli spaces
related to curves of genus $4$}}, Springer Lecture Notes in Mathematics \textbf{1008} (1983), 30-50.
%\bibitem[C]{C} M. Cornalba, {\em{Moduli of curves and theta-characteristics}},
%in: Lectures on Riemann surfaces (Trieste, 1987), 560-589.
\bibitem[CCC]{CCC} L. Caporaso, C. Casagrande and M. Cornalba, {\em{Moduli of roots of line bundles on curves}},
Transactions American Mathematical Society \textbf{359} (2007), 3733--3768.
\bibitem[CG]{CG} C.H. Clemens and P. A. Griffiths, {\em{The intermediate {J}acobian of the cubic threefold}},
Annals of Mathematics \textbf{95} (1972), 281--356.
\bibitem[Co]{C} M. Cornalba, {\em{Moduli of curves and theta-characteristics}},
in: Lectures on Riemann surfaces ({T}rieste, 1987),
560--589. World Sci. Publ., Teaneck, NJ, 1989.
\bibitem[Cl]{Cl} H. Clemens, {\em{Double solids}}, Advances in Mathematics \textbf{47}
(1983), 107-230.
\bibitem [DP]{DP} C. De Concini and P. Pragacz, {\em{On the class of Brill-Noether loci for Prym varieties}},
Mathematische Annalen \textbf{302} (1995), 687--697.
\bibitem[De]{De} O. Debarre, {\em{Vari\'et\'es de Prym et ensembles d'Andreotti et Mayer}}, Duke Math. Journal
\textbf{60} (1990), 599-630.
%\bibitem[Do1]{Dol} I. Dolgachev, {\em{Rationality of fields of
%invariants}}, Algebraic Geometry Bowdoin 1985, Proceedings of
%Symposia  in Pure Mathematics, Volume 46-Part 2, 3-16.
\bibitem[Do1]{Do2} I. Dolgachev, {\em{Rationality of $\R_2$ and $\R_3$}}, Pure and Applied Math. Quarterly, \textbf{4} (2008), 501-508.
\bibitem[Do2]{Do3} I. Dolgachev, {\em{Mirror symmetry for lattice polarized $K3$ surfaces}}, Journal Math. Sciences \textbf{81} (1996), 2599-2630.
\bibitem[D1]{D1} R. Donagi, {\em{The unirationality of $\A_5$}},
Annals of Mathematics \textbf{119} (1984), 269-307.
\bibitem[D2]{D2} R. Donagi, {\em{The fibers of the Prym map}},
Contemporary Mathematics \textbf{136} (1992), 55-125.
\bibitem[DS]{DS} R. Donagi and R. Smith, {\em{The structure of the
Prym map}}, Acta Mathematica \textbf{146} (1981), 25-102.
\bibitem[FMP]{FMP} G. Farkas, M. Musta\c{t}\u{a} and M. Popa, {\em{Divisors on
$\M_{g, g+1}$ and the Minimal Resolution Conjecture for points on
canonical curves}}, Annales Scientifique \'Ecole  Normale
Sup\'erieure \textbf{36} (2003), 553-581.
\bibitem[F1]{F1} G. Farkas, {\em{Koszul divisors on moduli spaces of
curves}}, American Journal of Mathematics \textbf{131} (2009),
819-869.
\bibitem[F2]{F2} G. Farkas, {\em{Aspects of the birational geometry of $\mm_g$}}, Surveys in differential geometry, Vol. 14, 2010, 57-110.
\bibitem[F3]{F3} G. Farkas, {\em{The birational type of the moduli space of even spin curves}}, Advances in Mathematics 223 (2010), 433-443.
\bibitem[FL]{FL} G. Farkas and K. Ludwig, {\em{The Kodaira dimension of the moduli
space of Prym varieties}}, Journal of the European Mathematical Society \textbf{12} (2010), 755-795.
\bibitem[FP]{FP} G. Farkas and M. Popa, {\em{Effective divisors on
$\mm_g$, curves on $K3$ surfaces and the Slope Conjecture}}, Journal
of Algebraic Geometry \textbf{14} (2005), 151-174.
\bibitem[FV1]{FV} G. Farkas and A. Verra, {\em{The geometry of the moduli space of odd spin curves}}, arXiv:1004.0278.
\bibitem[FV2]{FV2} G. Farkas and A. Verra, {\em{Moduli of theta-characteristics via Nikulin  surfaces}}, arXiv:1104.0273
\bibitem[FV3]{FV3} G. Farkas and A. Verra, {\em{$\R_8$ is uniruled}}, in preparation.
\bibitem[HF]{HF} H. Farkas, {\em{On the Schottky relation and its generalization to arbitrary genus}}, Annals of Mathematics \textbf{92} (1970), 57-81.
\bibitem[HFR]{FR} H. Farkas and H. Rauch, {\em{Period relations of Schottky type on Riemann surfaces}}, Annals of Mathematics \textbf{92} (1970), 434-461.
\bibitem[Fay]{Fay} J. Fay, {\em{Theta functions on Riemann surfaces}}, Lecture Notes in Mathematics, vol. 352, Springer Verlag 1973.
\bibitem[FS]{FS} R. Friedman and R. Smith, {\em{The generic Torelli
theorem for the Prym map}}, Inventiones Math. \textbf{67} (1982),
437-490.

\bibitem[HM]{HM} J. Harris and D. Mumford, {\em{On the Kodaira dimension of the moduli space of curves}},
Inventiones Math. \textbf{67} (1982), 23--88.
With an appendix by William Fulton.
\bibitem[EH]{EH} D. Eisenbud and J. Harris, {\em{The Kodaira
dimension of the moduli space of curves of genus $\geq 23$}},
Inventiones Math. \textbf{90} (1987), 359-387.
\bibitem[vG]{vG} B. van Geemen, {\em{Siegel modular forms vanishing on the moduli space of curves}}, Inventiones Math. \textbf{78} (1984), 329-349.
\bibitem[vGS]{vGS} B. van Geemen and A. Sarti, {\em{Nikulin involutions on $K3$ surfaces}}, Mathematische Zeitschrift \textbf{255} (2007), 731-753.
%\bibitem[G1]{G} S. Grushevsky, {\em{Superstring scattering amplitudes in higher genus}}, Communications Math. Physics
%\textbf{287} (2009), 749-767.
\bibitem[GHS]{GHS} V. Gritsenko, K. Hulek and G.K. Sankaran, {\em{The Kodaira dimension of the moduli of $K3$ surfaces}}, Inventiones Math. \textbf{169} (2007), 519-567.
\bibitem[GK]{GK} S. Grushevsky and I. Krichever, {\em{Integrable discrete Schr\"odinger equations and a characterization of Prym varieties by a pair of quadrisecants}}, Duke Math. Journal \textbf{152} (2010), 317-371.
\bibitem[G]{G2} S. Grushevsky, {\em{The Schottky problem}},  arXiv:1009.0369, to appear in: Classical Algebraic Geometry Today, MSRI 2009.
\bibitem[Gr]{Gr} M. Green, {\em{Koszul cohomology and the cohomology of projective varieties}}, J. Differential Geometry \textbf{19} (1984), 125-171.
\bibitem[GL1]{GL1} M. Green and R. Lazarsfeld, {\em{The non-vanishing of certain Koszul cohomology groups}}, J. Differential Geometry \textbf{19}
(1984), 168-170.
\bibitem[GL2]{GL} M. Green and R. Lazarsfeld, {\em{Some results on
the syzygies of finite sets and algebraic curves}}, Compositio
Mathematica \textbf{67} (1988), 301-314.
\bibitem[H]{H} N. Hitchin, {\em{Stable bundles and integrable systems}}, Duke Math. Journal \textbf{54} (1987), 91-114.
\bibitem[IGS]{IGS} E. Izadi, M. Lo Giudice and G. Sankaran, {\em{The
moduli space of \'etale double covers of genus $5$ is unirational}},
Pacific Journal of Mathematics \textbf{239} (2009), 39-52.
\bibitem[IL]{IL} E. Izadi and H. Lange, {\em{Counterexamples of high Clifford-index to Prym-Torelli}},  arXiv:1001.3610.

\bibitem[J]{J} T. J. Jarvis, {\em{Geometry of the moduli of higher spin curves}},
International Journal of Mathematics \textbf{11} (2000), 637--663.

\bibitem[Ka]{Ka} P. Katsylo, {\em{On the unramified $2$-covers of curves of genus $3$}}, in: Algebraic Geometry and Applications
(Yaroslavl 1992), Aspects of Mathematics Vol. E25 (1994), 61-65.
\bibitem[Kl]{Kl} F. Klein, {\em{\"Uber Riemanns Theorie der algebraischen Funktionen und ihrer Integrale}}, Gesammelte mathematische Abhandlungen 3, 499-573.
\bibitem[Kr]{Kr} I. Krichever, {\em{Characterizing Jacobians via  trisecants of the Kummer variety}}, Annals of Mathematics \textbf{172} (2010), 485-516.
\bibitem[Kra]{Kra} A. Krazer, {\em{Friedrich Prym}}, Jahresbericht der Deutschen Mathematiker-Vereinigung \textbf{25} (1917), 1-15.
\bibitem[La]{La} D. Laugwitz, {\em{Bernhard Riemann 1826-1866, Turning Points in the Conception of Mathematics}}, translated by Abe Shenitzer, Birkh\"auser 1999.
\bibitem[Lo]{Lo} E. Looijenga, {\em{Smooth Deligne-Mumford compactifications by means  of Prym level structures}}, Journal of Algebraic Geometry
\textbf{3} (1994), 283-293.
\bibitem[MM]{MM} S. Mori and S. Mukai, {\em{The uniruledness of the
moduli space of curves of genus $11$}}, Springer Lecture Notes in
Mathematics  \textbf{1016} (1983),  334–353.
\bibitem[M1]{M1} S. Mukai, {\em{Curves, $K3$ surfaces and Fano $3$-folds of genus $\leq 10$}}, in: Algebraic geometry and commutative algebra
(1988), 357-377, Kinokuniya, Tokyo.
\bibitem[M2]{M2} S. Mukai, {\em{Curves and Grassmannians}}, in: Algebraic Geometry and Related Topics, Inchon 1992,  (J.-H. Yang, Y. Namikawa, K. Ueno, editors),  1992, 19-40, International Press.
\bibitem[M]{M} D. Mumford, {\em{Prym varieties I}}
in: Contributions to analysis (L. V. Alfors et al. eds.),
Academic Press, New York (1974), 325-350.
\bibitem[N]{N} B.C. Ng\^o, {\em{Le lemme fondamental pour les alg\'{e}bres de Lie}}, Publ. Mathematiques Inst. Hautes \'Etudes Scientifique \textbf{111} (2010), 1-169.
\bibitem[Ni]{Ni} V.V. Nikulin, {\em{Kummer surfaces}}, Izvestia Akad. Nauk SSSR \textbf{39} (1975), 278-293.
\bibitem[P1]{P1} F. Prym, {\em{Zur Theorie der Funktionen in einer zweibl\"attrigen Fl\"ache}}, Denkschrift der Schweiz. Naturforschenden  Gesellschaft, Bd. 22, 1866.
\bibitem[P2]{P2} F. Prym, {\em{Zur Integration der gleichzeitigen Differentialgleichungen $\frac{\partial^2 u}{\partial x^2}+\frac{\partial^2 u}{\partial y^2}=0$}}, J. reine angewandte Mathematik \textbf{73} (1871), 340-364.
\bibitem[PR]{PR} F. Prym and G. Rost, {\em{Theorie der Prymschen Funktionen erster Ordnung im Anschluss an die Sch\"opfungen Riemanns}}, 1911.
\bibitem[R]{R} M. Raynaud, {\em{Sections des fibr\'es vectoriels sur
une courbe}}, Bulletin Soc. Math. France \textbf{110} (1982), 103-125.
\bibitem[Re]{Re}
M. Reid, {\em{Canonical {$3$}-folds}}, in Journ\'{e}es de
G\'{e}ometrie Alg\'{e}brique d'Angers,
  Juillet 1979/Algebraic Geometry, Angers 1979, Sijthoff \&
  Noordhoff, Alphen aan den Rijn (1980), 273-310.
\bibitem[Re2]{re2002}
M. Reid, {\em{La correspondance de {M}c{K}ay}}, S\'eminaire Bourbaki,
Vol.\ 1999/2000, Ast\'erisque, \textbf{276} (2002), 53-72.
\bibitem[Sch]{Sch} F. Schottky, {\em{Zur Theorie der Abelschen Funktionen von vier Variabeln}}, J. reine angewandte Mathematik, \textbf{102} (1888), 304-352.
\bibitem[SJ]{SJ} F. Schottky and H. Jung, {\em{Neue S\"atze \"uber Symmetralfunktionen und die Abelschen
Funktionen der Riemannschen Theorie}}, S.-B.  Preuss. Akad. Wiss. Berlin, Phys. Math. Kl. 1 (1909), 282-287.
\bibitem[T]{ta1982}
Y. Tai, {\em{On the Kodaira dimension of the moduli space of abelian
varieties}}, Inventiones Math. \textbf{68} (1982), 425-439.
\bibitem[V1]{V1} A. Verra, {\em{A short proof of the unirationality
of $\A_5$}}, Indagationes Math. \textbf{46} (1984), 339-355.
\bibitem[V2]{V2} A. Verra, {\em{On the universal principally polarized abelian variety of dimension $4$}}, in: Curves and abelian varieties (Athens, Georgia, 2007)  Contemporary Mathematics \textbf{465} (2008), 253-274.
\bibitem[Vo]{Vo} C. Voisin, {\emph{Green's generic syzygy conjecture
for curves of even genus lying on a $K3$ surface}}, Journal of
European Mathematical Society \textbf{4} (2002), 363-404.
\bibitem[Vol]{Vol} H.-J. Vollrath, {\em{Friedrich Prym (1841-1915)}}, in: Lebensbilder bedeutender W\"urzburger Professoren (P. Baumgart editor), Neustadt/Aisch 1995, 158-177.
\bibitem[Wi]{Wi} W. Wirtinger, {\em{Untersuchungen \"uber Thetafunktionen}}, Teubner, Berlin 1895.
\bibitem[We]{W} G. Welters, {\em{A theorem of Gieseker-Petri type for Prym varieties}},
Annales Scientifique \'Ecole Normale Sup\'erieure  \textbf{18} (1985), 671--683.


\end{thebibliography}
\end{document}